\newcommand{\Z}{\mathbb{Z}}
\newcommand{\R}{\mathbb{R}}
\newcommand{\N}{\mathbb{N}}
\newtheorem{thm}{Theorem}[section]
\newtheorem{defn}[thm]{Definition}
\newtheorem{lemma}[thm]{Lemma}
\newtheorem{pro}[thm]{Proposition}
\newtheorem{rk}[thm]{Remark}
\newtheorem{cor}[thm]{Corollary}
\newtheorem{ex}[thm]{Example}
\newcommand{\footnoteremember}[2]{
	\footnote{#2}
	\newcounter{#1}
	\setcounter{#1}{\value{footnote}}
}
\newcommand{\footnoterecall}[1]{
	\footnotemark[\value{#1}]
}
\def\violet#1{{\textcolor{violet}{#1}}}
\begin{document}
\title{Infinite-volume states with irreducible localization sets for gradient models on trees}
\author{
 Alberto Abbondandolo,\footnoteremember{RUB}{Ruhr-Universit\"at   Bochum, Fakult\"at f\"ur Mathematik, D44801 Bochum, Germany} \footnote{Alberto.Abbondandolo@ruhr-uni-bochum.de}
	\and 
	Florian Henning,\footnoterecall{RUB} \footnote{Florian.Henning@ruhr-uni-bochum.de}
	\and  Christof K\"ulske, \footnoterecall{RUB} \footnote{Christof.Kuelske@ruhr-uni-bochum.de}
\and  Pietro Majer \footnoteremember{Pisa}{Universit{\`a} di Pisa, Dipartimento di Matematica, I56127 Pisa, Italy} \footnote{Pietro.Majer@dm.unipi.it}\, 
	\,  
}
\date{February 10, 2023} 	
\maketitle
	
\begin{abstract} {We consider general classes of gradient models 
on regular trees with values in a countable Abelian group $S$ such as $\Z$ or $\Z_q$, in regimes of strong coupling (or low temperature). This includes unbounded spin models like 
the p-SOS model and finite-alphabet clock models. 
We prove the existence of families of distinct homogeneous tree-indexed Markov chain Gibbs states $\mu_A$ 
whose single-site marginals concentrate on a given finite subset $A\subset S$ of spin values, 
under a strong coupling condition for the interaction, 
depending only on the cardinality $\vert A \vert$ of $A$. 
The existence of such states is a new and robust phenomenon which is of 
particular relevance for infinite spin models. 
These states are not convex combinations of each other, and 
in particular the states with $\vert A \vert \geq 2$ can not be decomposed into homogeneous Markov-chain Gibbs states with a single-valued concentration center. 
As a further application of the method we obtain moreover the existence
of new types of $\Z$-valued gradient Gibbs states, whose single-site marginals 
do not localize, but whose correlation structure depends on the finite set $A$.  }
\end{abstract}
\textbf{MSC2020 subject classifications:} 82B26 (primary);
60K35 (secondary)\\
\textbf{Key words:}  Gibbs measure, regular tree, tree-indexed 
Markov chain, localization, delocalization, 
Brouwer fixed point theorem. 

\newpage 

\tableofcontents

\section{Introduction}	 

\subsubsection*{Gradient models on lattices and trees}	 

Statistical mechanics models for $\Z$- or $\R$-valued height variables with gradient interactions 
have been studied in a number of variations. For homogeneous models with different types of base spaces (such as lattices and trees)  
and different interaction potentials, see  
\cite{CoDeuMu09},\cite{DeuGiIo00},\cite{FS97},\cite{KoL14},\cite{Sh05},\cite{LaOt21},\cite{Ve06},\cite{CaLuEyMaSlTo14}.   
For disordered models with quenched randomness in the interaction, see \cite{DaHaPe21},\cite{BK94},\cite{CKu15},\cite{EnKu08},\cite{CoKuLN22}. 

In this paper we study $S$-valued gradient models on $d$-regular trees, whose interactions are defined by
transfer operators given by an even function $Q:S \rightarrow (0,\infty)$. Here, $S$ is assumed to be a countable Abelian group which we think of as the local state space of the system. In particular, 
if $S \subset \mathbb{R}$ is infinite, then the local state space can be viewed as the height-dimension of the system.
In concrete applications in statistical mechanics, we often encounter the case $S=\Z$ and the transfer operator 
is given by $Q(i)=\exp(-\beta U(|i|))$, 
where $U:S\rightarrow\R$ is a potential function prescribing the energetic cost of a spin configuration 
to make an increment of size $|i|$ along an edge of the tree, or more generally an edge of 
the supporting graph. 

Important special cases for the choice 
of the potential are the p-SOS model with $U(|i|)=|i|^p$, for which exponents $p\in (0,\infty )$
are allowed.  The most popular choices are 
$p=1$ which corresponds to the classical SOS model, and $p=2$ which gives the discrete Gaussian (see \cite{BaPaRo22,BaPaRo22a} for an analysis on the lattice.)
In our present approach besides positivity and evenness of $Q$, however, we will make no assumption on monotonicity or convexity in the interaction, and treat the function $Q$ as an infinite-dimensional parameter of the model. 
In contrast to this, \cite{LaTo22} provides an extensive description of gradient Gibbs measures for certain classes of gradient models on the tree whose underlying potential function is strictly convex.

The main interest of the study 
is in the construction and description of infinite-volume Gibbs measures (GM)
given by the DLR-consistency equation, as well as in gradient Gibbs measures (GGM) in the case of non-compact 
local spin space $S$ (for general background see \cite{Ge11},\cite{Sh05},\cite{FS97}).  GGMs are relevant as 
generalizations of the concept of GMs since they are suited to describe infinite-volume states 
which \textit{do not} localize in any bounded region $A\subset S$. 
In our present work we will only consider homogeneous (tree-automorphism invariant) 
measures. For some results on non-homogeneous measures on trees with homogeneous interactions we refer to \cite{GaRuSh12},\cite{GaMaRuSh20},\cite{AkRoTe11},\cite{HeKu21},\cite{CoKuLN22}. 

\subsubsection*{Main new result: Localization for \texorpdfstring{$S$}{S}-valued model on arbitrary finite localization sets \texorpdfstring{$A$}{A} } 
In a previous paper \cite{HeKu21a} two of us considered the case $S=\Z$ and proved the existence of localized Gibbs measures
under a strong coupling condition formulated for $Q$, namely 
boundedness of $(d+1)/2$-norm and small deviation of $Q$ from $1_{\{0\}}$ in terms of the $d+1$-norm. 
We showed that there are 
homogeneous states $\mu_i$ whose single-site
marginals are concentrated around \textit{single fixed heights} $i \in \Z$. 

In the present paper we will extend this type of result 
to the case of arbitrary finite localization sets $A\subset S$, 
for the height variables, under appropriate strong coupling conditions. 
The strong-coupling states with non-singleton concentration sets 
are of a new type in the setup of unbounded spin models, and 
to our best knowledge have not been discussed before. 
To appreciate the result it is important to note that these new tree-indexed Markov chain Gibbs measures $\mu_A$ 
constructed in our present work are not convex combinations of each other, and in particular 
not of the spatially homogeneous measures $\mu_i$ with single-height concentration which were constructed in \cite{HeKu21a}.

The existence 
result of our new Theorem \ref{main1-thm} holds under under an $N$-dependent 
strong-coupling condition on $Q$ which provides existence of 
measures $\mu_A$ which concentrate on
localization sets $A$ of size $|A|\leq N$. 
It is particularly remarkable that under this condition  
the localization sets in these families can 
be arbitrarily spread out.    

This existence result may look surprising but can be made plausible by seeing it as an infinite-dimensional 
generalization of a simpler phenomenon which is known to appear in the $q$-state Potts model on the tree. 
The homogeneous Markov chain states of the Potts model can described via explicit 
computations \cite{KuRo14}, due to the full 
invariance of the interaction under permutation in local spin-space. 

Our proof in the general infinite-dimensional case 
is based on the boundary law description of 
Gibbs measures going back to Zachary \cite{Z83}. 
In the present case of gradient interactions on $S$ 
this leads to a non-linear fixed point equation in the space $\ell^{d+1}(S)$ of $d+1$-summable
functions $u: S \rightarrow \R$.
In general explicit solutions 
are out of question, and for our proof we will develop a fixed point method,  
adapted with a view to the type of $A$-dependent states we are hoping to find, see Section \ref{existence-sec}.

Our approach to study the infinite-dimensional fixed point problem is 
to break the problem into two parts: on the given finite concentration set $A$ where we expect 
to find the large components, 
and a conditional problem away from it where we expect to find the small components.  
For the latter we devise a suitable (conditional) 
map on sequence space which we show to be a contraction, 
for the former we employ the Brouwer fixed point theorem, see \eqref{eq: MapG}. 
This leads us to quite explicit quantitative thresholds for the system parameters of given 
models for which we can prove existence of $A$-concentrated states, see Proposition \ref{existence}.
On the level of system parameters, the strong coupling condition on $Q=\exp(-\beta U)$ 
translates into the fact that the parameter $\beta$ - which as usual should
be interpreted as the inverse temperature - should be large enough, see Section \ref{examples-sec}. 
For a discussion of uniqueness, see the Remarks \ref{uniqueness1} and \ref{uniqueness1b}. 

\subsubsection*{Harvesting new families of delocalized gradient Gibbs measures (GGM)} 
In the case $S=\Z$, next to proper Gibbs measures, another class of consistent measures, namely 
the gradient Gibbs measures have received much attention, see \cite{FS97,KoL14,Sh05,HKLR19}. 
Gradient Gibbs measures (as opposed to Gibbs measures) 
are measures which are only defined 
on $\Z^V/\Z$, which is the space of infinite-volume height configurations modulo a joint height shift 
(as opposed to the state space of absolute heights $\Z^V$ itself). Their defining property is the validity of the 
DLR consistency equation, but read only modulo joint height-shift, for details see Subsection \ref{subsec: DefGGM}. 
As a consequence of the first part of our work we also 
obtain new families of gradient Gibbs measures $\nu^{q}_{A}$, where $q\geq 2$ is 
an integer and $A\subset \Z^q$. 
They have the delocalization property (see Theorem \ref{thm: deloc}), and hence do not stem from homogeneous 
Markov chain GMs.  Therefore they are completely 
different in character from the localized GMs. 

We construct these states $\nu^{q}_{A}$ as follows. 
The idea is to relate to the Gibbs measures $\mu^q_A$ in an associated $q$-state 
clock model on $\Z_q$ 
with an interaction $Q^q$ built from the original interaction $Q$ on $\Z^V$
a gradient Gibbs measure $\nu$ on $\Z^V/\Z$. This is done  
via an edge-wise resampling procedure, see Subsection \ref{subsec: ConstructionOfGGM}.
The concentration properties of the clock-measures 
$\mu^q_A$ we constructed in our first main Theorem \ref{main1-thm}, then carries over 
to an interesting $A$-dependent correlation structure for the gradient measures $\nu^{q}_{A}$, 
see Corollary \ref{thm: PeriodicGGM} and the discussion below.
The remainder of the paper is organized as follows: In Section \ref{sec: Definitions} we define our models. Section \ref{results-sec} then contains our main results regarding Gibbs measures for arbitrary finite concentration sets $A \subset S$. Section \ref{sec GGM} discusses existence of delocalized gradient states with $A$-dependent correlation structure. Finally, Section \ref{sec: Proofs} contains the proofs.  

\newpage

\section{Definitions}\label{sec: Definitions} 

In this section, we review some definitions and known facts which are necessary in order to formulate our main result.

\subsection{Spin configurations on the Cayley tree}

Let $\Gamma^d=(V,E)$ denote the \textit{d-regular tree} or \textit{Cayley tree} of order $d \geq 2$, where $V$ is the countably infinite set of vertices and $E \subset V \times V$ is the set of (unoriented) edges. The term \textit{d-regular tree} means that the graph $\Gamma^d$ is connected without cycles and each vertex $x \in V$ has exactly $d+1$ nearest neighbors, i.e., vertices which are connected to $x$ by an edge.

A \textit{path} connecting two vertices $x,y \in V$ is an ordered list of $n$ edges 
\[
\{x,x_1\},\{x_1,x_2\},\ldots,\{x_{n-1},y\}
\] 
where any two consecutive edges share a common vertex. The length of the unique shortest path from $x$ to $y$ defines the \textit{graph distance} $\text{d}(x,y)$. 

Besides the set of unoriented edges $E$, we also consider the set $\vec{E}$ of oriented edges, which consists of the ordered pairs $(x,y)$ of vertices such that $\{x,y\} \in E$.

For any subset $\Lambda \subset V$, we denote by $\Lambda^c$ the complement of $\Lambda$ in $V$ and by
\[\partial \Lambda:= \{x \in \Lambda^c \mid \text{d}(x,y)=1 \text{ for some } y \in \Lambda\}\]
the \textit{outer boundary} of $\Lambda$. By $\Lambda \Subset V$ we indicate that $\Lambda$ is a finite subset of $V$. 

We set 
\[
E_\Lambda:=\{\{x,y\} \in E \mid x,y \in \Lambda\}, \qquad \vec{E}_\Lambda:=\{(x,y) \in \vec{E} \mid x,y \in \Lambda\}.
\]
and note that the pair $(\Lambda,E_\Lambda)$ is a subgraph of $\Gamma^d$, which is a subtree if and only if it is connected. 

Let $(S,+)$ be a countable Abelian group, which we think of as the \textit{local state space} of our system. Important particular cases are given by the lattices $S=\Z^k$, $k\in \N$, and by the finite cyclic groups $S=\Z_q$, $q\in \N$. We see $S$ as a discrete group and endow it with the measurable structure given by the whole power set $\mathcal{P}(S)$. 

By the symbol $\ell^p(S)$, $1\leq p \leq \infty$, we denote the space of $p$-summable real valued functions on $S$, which is a Banach space with the norm
\[
\|u\|_p := \left( \sum_{i\in S} |u(i)|^p \right)^{\frac{1}{p}} \qquad \mbox{for } 1\leq p < \infty, \qquad \|u\|_{\infty} := \sup_{i\in S} |u(i)|.
\] 
We recall that
\[
\ell^p(S) \subset \ell^q(S) \qquad \mbox{and} \qquad \|u\|_q \leq \|u\|_p \qquad \mbox{if} \quad 1\leq p \leq q \leq \infty.
\]
When the group $S$ is finite, the spaces $\ell^p(S)$ are of course independent of $p$, as they all coincide with $\R^S$, but the $p$-norms on them are different.  Convolution on $S$ is denoted by
\[
(u * v) (i) := \sum_{j\in S} u(i-j) v(j).
\]
A \textit{spin configuration} $\omega=(\omega_x)_{x \in V}$ is a map from the set of vertices $V$ to the local state space $S$, and the set of all spin configurations is denoted by $\Omega:=S^V$. For any subset $\Lambda \subset V$ and any $\omega\in \Omega$, we set $\Omega_\Lambda:=S^\Lambda$ and denote by $\omega_{\Lambda}\in \Omega_\Lambda$ the restriction of $\omega$ to $\Lambda$. 

We endow each $\Omega_\Lambda$ with the product $\sigma$-algebra $\mathcal{F}_\Lambda$ generated by the \textit{spin projections} $\sigma_y: \Omega_\Lambda \rightarrow S, \ \sigma_y(\omega)=\omega_y$, where $y \in \Lambda$, and denote by $\mathcal{F}:=\mathcal{F}_V$ the product $\sigma$-algebra on $\Omega$. 

The set of all probability measures on the space $(\Omega,\mathcal{F})$ is denoted by \\ $\mathcal{M}_1(\Omega,\mathcal{F})$.
 We call a probability measure $\mu\in \mathcal{M}_1(\Omega, \mathcal{F})$ \textit{(spatially) homogeneous} if it is invariant under all automorphisms $\varphi: V \rightarrow V$ of the tree, i.e., $\mu = \mu \circ \varphi_*^{-1}$, where $\varphi_*: \mathcal{F} \rightarrow \mathcal{F}$ is the map $\varphi_*(A):=\{\omega \circ \varphi^{-1} \mid \omega \in A\}$.

Given a spatially homogeneous probability measure $\mu$, we denote by
\[
\pi_{\mu}(i):= \mu ( \sigma_x = i ), \qquad P_{\mu}(i,j) := \mu (\sigma_x = j \mid \sigma_y = i), \qquad \forall i,j\in S,
\]
the \textit{single state marginal} and the \textit{transition matrix} induced by $\mu$. Here, $x\in V$ is any vertex and $\{x,y\}\in E$ is any edge, but the above objects do not depend on these choices because the measure $\mu$ is assumed to be  spatially homogeneous.

\subsection{Tree-indexed Markov chains}

The notion of a tree-indexed Markov chain as given in Chapter 12 of \cite{Ge11} is based on the definition of the past of an oriented edge.
Given any vertex $v \in V$ we write
\[
{}^v\vec{E}:=\{(x,y) \in \vec{E} \mid \text{d}(y,v)=\text{d}(x,v)+1\}
\]
for the oriented edges pointing away from $v$. The \textit{past of an oriented edge} $(x,y) \in \vec{E}$ is then defined by 
\[ 
]-\infty,xy]:=\{v \in V \mid (x,y) \in {}^v\vec{E} \}.
\]

In other words, it consists of those vertices $v \in V$ for which the shortest path from $v$ to $y$ contains $x$.
A probability measure $\mu$ on $(\Omega,\mathcal{F})$ is then called a \textit{tree-indexed Markov chain} (or simply a \textit{Markov chain}) if for all oriented edges $(x,y) \in \vec{E}$ and all $i \in S$ we have
\[
\mu(\sigma_y=i \mid \mathcal{F}_{]-\infty,xy]})=\mu(\sigma_y=i \mid \mathcal{F}_{x}) \quad \mu-\text{a.s.}
\]

\subsection{Transfer operators and Gibbs measures}

In this paper, by a \textit{transfer operator} on $(\Gamma^d,S)$ we mean a function 
\[
Q: S \rightarrow (0,+\infty)
\]
which is symmetric (i.e., $Q(-i) = Q(i)$ for every $i\in S$) and belongs to $\ell^{\frac{d+1}{2}}(S)$. A more precise name for such an object would be \textit{spatially homogeneous positive symmetric transfer operator}. Often, transfer operators are given in terms of a suitable even interaction function $U : S \rightarrow [0,+\infty)$ as
\[
Q(i) = e^{-\beta U(i)},
\]
where $\beta>0$ should be interpreted as the inverse of a temperature. 

A transfer operator $Q$ induces the \textit{Markovian gradient specification}
\[
\gamma = \{\gamma_{\Lambda}: \mathcal{F} \times \Omega \rightarrow [0,1] \}_{\Lambda  \Subset V}
\]
by the assignment
\begin{equation}\label{eq: GibbsSpecification}
\gamma_\Lambda(\sigma_\Lambda= \tilde{\omega} \mid \omega)=\frac{1}{Z_\Lambda(\omega_{\partial \Lambda})}\left(\prod_{\{x,y\} \in E_\Lambda}Q(\tilde{\omega}_x-\tilde{\omega}_y) \right) \, \prod_{\genfrac{}{}{0pt}{}{\{x,y\} \in E}{ x \in \Lambda, y \in \partial \Lambda}}Q(\tilde{\omega}_x-\omega_y), 
\end{equation}
for every $\Lambda\Subset V$, $\tilde\omega\in \Omega_{\Lambda}$ and $\omega\in \Omega$. Here, the \textit{partition function} $Z_{\Lambda}$ gives for every $\omega\in \Omega$ the normalization constant $Z_{\Lambda}(\omega) = Z_{\Lambda}(\omega_{\partial\Lambda})$ turning $\gamma_{\Lambda}(\cdot \mid \omega)$ into a probability measure on $(\Omega,\mathcal{F})$. The assumptions $Q>0$ and $Q\in \ell^{\frac{d+1}{2}}(S)$ guarantee that such a partition function does exist. See Lemma 1 in \cite{HeKu21a} (here the case $S=\Z^k$ is considered, but the proof immediately generalizes to the case of an arbitrary countable group $S$). The quantities $Q(\tilde{\omega}_x-\tilde{\omega}_y)$ and $Q(\tilde{\omega}_x-\omega_y)$ are well defined for $\{x,y\}\in E$ because $Q$ is assumed to be symmetric.

\begin{rk}\label{rk: EquivOfPot}
{\rm Note that if $\tilde{Q}=c\,Q$ for some $c >0$, then the Markovian gradient specifications which are induced by $Q$ and $\tilde{Q}$ coincide. We shall often find it useful to normalize $Q$ by requiring $Q(0)=1$.}
\end{rk}

A \textit{Gibbs measure} for a specification $\gamma$ (a transfer operator $Q$, respectively) is by definition a probability measure $\mu$ on $(\Omega,\mathcal{F})$, such that for all $\Lambda \Subset V$ and all $A \in \mathcal{F}$ the \textit{Dobrushin-Lanford-Ruelle (DLR) equation} 
\[
\mu(A)= \int \gamma_\Lambda(A \mid \omega) \mu(\text{d} \omega) 
\]
holds true.

We denote the (possibly empty) convex set of Gibbs measures on $(\Omega, \mathcal{F})$ for a specification $\gamma$ by $\mathcal{G}(\gamma)$. If $\mathcal{G}(\gamma)$ is not empty, then each of its elements is the convex combination of  extremal elements of $\mathcal{G}(\gamma)$ (see eg. Thm.  7.26 in \cite{Ge11}). On the tree, each such extremal Gibbs measure for a Markovian specification is a tree-indexed Markov chain (eg. Thm 12.6 in \cite{Ge11} for this statement in the case in which $S$ is finite; the proof generalizes to countable local state spaces). Writing $\text{ex} \,C$ for the set of extremal points of a convex set $C$ and $\mathcal{MG}(\gamma)$ for the set of Gibbs measures for $\gamma$ which are Markov chains, the above statement reads
\begin{equation}
\text{ex}\, \mathcal{G}(\gamma) \subset \mathcal{MG}(\gamma) \subset \mathcal{G}(\gamma). 
\end{equation}	

\begin{rk}\label{distance} 
{\rm
Assume that the transfer operator $Q$ on the discrete Abelian group $S$ is normalized by $Q(0)=1$ and satisfies $Q(i)<1$ for every $i\in S\setminus \{0\}$. Then $Q$ induces the translation invariant ``distance function''
\[
\mathrm{dist}_Q(i,j) := - \log Q(i-j), \qquad \forall i,j\in S,
\]
where the quotes refer to the fact that the function $\mathrm{dist}_Q(i,j)$ is symmetric, non-negative, zero if and only if $i=j$, but in general does not satisfy the triangle inequality. It is a genuine distance function if $Q$ satisfies the $\log$-superadditivity condition $Q(i+j)\geq Q(i)Q(j)$. If we write $Q=e^{-U}$, where $U$ is a non-negative even function on $S$ vanishing  only at zero, then this is equivalent to the subadditivity of $U$. This condition is indeed fulfilled by many models, including the SOS-model and the Log-model which are analysed in Section \ref{examples-sec} below, while it is not satisfied, e.g., for the p-SOS model with potential $U(i) = \vert i \vert^p$ when $p>1$.}
\end{rk}

\section{Main result}\label{results-sec}

In this section, we present the main result of this paper regarding the existence of Markov-chain Gibbs measures on the regular Cayley $d$-tree with a countable Abelian group $S$ as local state space. The Gibbs measures we find localize on an arbitrary finite subset of $S$. We also discuss some of its immediate implications and applications.

\subsection{Existence of Gibbs measures localizing on finite sets}
\label{main1-sec}

In order to formulate the main existence result, we need to introduce some functions of the order $d$ of the Cayley tree and of the cardinality $n$ of the subsets of $S$ on which our Gibbs measure will localize. We denote by $\rho=\rho(d,n)$ the unique positive number such that
\[
(d-1)\rho^{d+1} + dn \rho^{d-1} - n = 0,
\]
and we set
\[
\eta(d,n) :=  \frac{\rho-\rho^d}{(\rho^{d+1} + n)^{\frac{d}{d+1}}}.
\]
Note that $\rho$ belongs to the interval $(0,1)$, so $\eta(d,n)$ is a positive number. We actually have the bounds $\rho(d,n) < d^{-\frac{1}{d-1}}$ and
\begin{equation}
\label{bb0}
d^{-\frac{1}{d-1}} \left( 1 - \frac{1}{d} \right) (n+1)^{- \frac{d}{d+1}} \leq \eta(d,n) \leq d^{-\frac{1}{d-1}} \left( 1 - \frac{1}{d} \right) n^{- \frac{d}{d+1}}< 1,
\end{equation}
as shown in Lemma \ref{asymptotics} in Section \ref{existence-sec} below. We can now state the main existence result of the paper:

\begin{thm}
\label{main1-thm}
Let $d\geq 2$ and $N\geq 1$ be integers. Assume that the transfer operator $Q\in \ell^{\frac{d+1}{2}}(S)$ is normalized by $Q(0)=1$ and satisfies the condition
\begin{equation}
\label{mainass}
\|Q-\mathbbm{1}_{\{0\}}\|_{\frac{d+1}{2}} \leq \eta(d,N).
\end{equation}
Then for every $A\subset S$ with $1\leq |A|\leq N$ the Markovian gradient specification which is induced by $Q$ on the regular $d$-tree with local state space $S$ admits a spatially homogeneous Markov-chain Gibbs measure $\mu$ such that, denoting by $\pi_{\mu}$ the single site marginals of $\mu$ and by
\[
\Delta(i):= P_{\mu}(i,i), \qquad i\in S,
\]
the diagonal elements of the transition matrix, we have:
\begin{equation}
\label{main2}
\|\pi_{\mu}|_{A^c}\|_1 < \theta \, \min_A \pi_{\mu}, \qquad \mbox{with } \theta := \bigl( d^{\frac{1}{d-1}}  \rho(d,|A|) \bigr)^{d+1} \in (0,1),
\end{equation}
\begin{equation}
\label{main1}
 \|\Delta|_{A^c}\|_{\frac{d+1}{d-1}} < \rho(d,|A|)^{d-1} <  \frac{1}{d} < \Delta|_A.
\end{equation}
Moreover, setting $\epsilon := \| Q-\mathbbm{1}_{\{0\}}\|_{\frac{d+1}{2}}$ and $n:=|A|$, the following estimates hold:
\begin{enumerate}[(i)]
\item $\|\Delta|_{A^c}\|_{\frac{d+1}{d-1}} \leq c_1\, \epsilon^{d-1}$, where $c_1 = c_1(d,n) := \frac{\rho(d,n)^{d-1}}{\eta(d,n)^{d-1}}$.
\item $\Delta|_A > 1 - c_2\, \epsilon$, where $c_2 = c_2(d,n) := (d^{\frac{d}{d-1}} - d)(1+n)^{\frac{d}{d+1}}$.
\item $\|\pi_{\mu}|_{A^c}\|_1 \leq c_3\, \epsilon^{d+1}$, where $c_3 = c_3(d,n) := \frac{d^{\frac{d+1}{d-1}} \rho(d,n)^{d+1}}{n \, \eta(d,n)^{d+1}} $.
\item $(1-c_5 \, \epsilon) \frac{1}{|A|} \leq \pi_{\mu}|_A \leq (1-c_4 \, \epsilon)^{-1} \frac{1}{|A|}$,
where $c_4 = c_4(d,n) := \frac{d+1}{d-1} c_2(d,n)$ and $c_5 = c_5(d,n) := c_4(d,n) + \frac{\rho(d,n)^{d+1}}{n \, \eta(d,n)^{d+1}}$.
\item $\pi_{\mu}(i) \geq \frac{1}{|A|} (1-c_6 \,\epsilon) \left( \sum_{j\in A} Q(i-j) \right)^{d+1}$ for every $i\in A^c$, where $c_6=c_6(d,n):=d\, c_4(d,n) +  \frac{\rho(d,n)^{d+1}}{n \, \eta(d,n)^{d+1}} $.
\end{enumerate}
\end{thm}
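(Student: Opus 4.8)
The plan is to pass, via the boundary-law description of tree-indexed Markov-chain Gibbs measures (Zachary \cite{Z83}, Georgii Ch.~12), to a single scalar fixed-point equation. A spatially homogeneous boundary law for the gradient specification \eqref{eq: GibbsSpecification} on $\Gamma^d$ is (after normalization, using $Q(0)=1$) a positive $z:S\to(0,\infty)$ with $z=(Q*z)^d$; writing $u:=Q*z$ this is equivalent to
\[
u = Q*(u^d), \qquad u:S\to(0,\infty),
\]
where $u^d$ denotes the pointwise $d$th power. Since $Q\in\ell^{\frac{d+1}{2}}(S)$, Young's inequality makes $u\mapsto Q*(u^d)$ a map of $\ell^{d+1}(S)$ into itself, so one looks for $u$ in $\ell^{d+1}(S)$; from such a $u$ one gets the homogeneous Markov chain with $P_\mu(i,j)=Q(i-j)u(j)^d/u(i)$, hence $\Delta(i)=P_\mu(i,i)=u(i)^{d-1}$ and, by stationarity, $\pi_\mu(i)=u(i)^{d+1}/\|u\|_{d+1}^{d+1}$. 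Thus it suffices to produce a solution $u$ close to $\mathbbm{1}_A$ in the precise sense that $u|_A\in[d^{-1/(d-1)},1]^A$ and $\|u|_{A^c}\|_{d+1}<\rho(d,|A|)$ (note that $u=\mathbbm{1}_A$ is exactly the degenerate solution at $Q=\mathbbm{1}_{\{0\}}$): then \eqref{main1} and \eqref{main2} are immediate, and (i)--(v) will follow from the explicit bounds obtained along the way.

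Write $u=a+b$ with $a$ supported on $A$ (a finite-dimensional variable) and $b$ supported on $A^c$, and split $Q=\mathbbm{1}_{\{0\}}+R$ with $R\geq0$, $R(0)=0$, $\epsilon:=\|R\|_{\frac{d+1}{2}}\leq\eta(d,N)$. As $a,b$ have disjoint supports, $u^d=a^d+b^d$, and the equation $u=Q*(u^d)$ decouples into
\[
a(i)-a(i)^d=\bigl(R*(a^d+b^d)\bigr)(i)\ \ (i\in A), \qquad b(i)-b(i)^d=\bigl(R*(a^d+b^d)\bigr)(i)\ \ (i\in A^c),
\]
the point being that restricting to $A^c$ annihilates the ``diagonal'' self-contribution of $a^d$, so the right-hand side on $A^c$ is $O(\epsilon)$. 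For the $A^c$-equation with $a$ fixed I introduce $\beta:=b^d\in\ell^{\frac{d+1}{d}}(A^c)$ and solve $\beta^{1/d}-\beta=(R*(a^d+\beta))|_{A^c}$ by \emph{inverting} the scalar map $t\mapsto t^{1/d}-t$ near $t=0$: its local inverse $\psi$, defined on $[0,s_{\max})$ with $s_{\max}:=d^{-1/(d-1)}(1-1/d)$, is convex with $\psi(0)=\psi'(0)=0$ and $\psi(s)\sim s^{d}$ near $0$, so the equation becomes $\beta=\psi\bigl((R*(a^d+\beta))|_{A^c}\bigr)$. Using the bound $\psi(s)\leq(1-\rho^{d-1})^{-d}s^d$ on $[0,\rho-\rho^d]$ (from $\psi(s)/s^d$ being increasing) together with Young's inequality one checks this map sends the ball $\{\|\beta\|_{(d+1)/d}\leq(\eta(d,|A|)^{-1}\rho(d,|A|)\,\epsilon)^d\}$ of $\ell^{\frac{d+1}{d}}(A^c)$ into itself precisely because $\epsilon\leq\eta(d,N)$; and because $\psi'$ is small on the relevant interval and each nonlinearity is preceded by a convolution with the small kernel $R$, the map is a contraction there, with Lipschitz constant of order $\epsilon$ (here the inequality $\rho(d,|A|)<d^{-1/(d-1)}$ keeps $\psi$ smooth on $[0,\rho-\rho^d]$). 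Banach's theorem then gives a unique solution $\beta=\Phi(a)\geq0$, Lipschitz in $a$.

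Substituting $u=a+\Phi(a)^{1/d}$ into the $A$-equation and inverting $t\mapsto t-t^d$ near $t=1$ (local inverse $\chi:[0,s_{\max})\to(d^{-1/(d-1)},1]$, convex, $\chi(0)=1$, $\chi'(0)=-1/(d-1)$, hence $\chi(s)\geq1-s/(d-1)$), the $A$-equation becomes the fixed-point problem $a=G(a)$ with $G(a)(i):=\chi\bigl((R*(a^d+\Phi(a)))(i)\bigr)$ for $i\in A$; since the argument lies in $[0,\rho-\rho^d]\subset[0,s_{\max})$ whenever $\epsilon\leq\eta(d,N)$, $G$ is a continuous self-map of the compact convex box $K:=[d^{-1/(d-1)},1]^A$, and Brouwer's theorem yields $a^\ast\in K$. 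Then $u^\ast:=a^\ast+\Phi(a^\ast)^{1/d}$ lies in $\ell^{d+1}(S)$, is positive (as $Q>0$ forces $Q*((u^\ast)^d)>0$), and solves $u^\ast=Q*((u^\ast)^d)$; the boundary law $z:=(u^\ast)^d$ defines, in the standard way, a spatially homogeneous tree-indexed Markov-chain Gibbs measure $\mu$ for \eqref{eq: GibbsSpecification} (all partition functions being finite by $Q\in\ell^{\frac{d+1}{2}}$, $u^\ast\in\ell^{d+1}$; cf.\ Lemma~1 of \cite{HeKu21a}). Finally one reads off the estimates from $\Delta=(u^\ast)^{d-1}$, $\pi_\mu=(u^\ast)^{d+1}/\|u^\ast\|_{d+1}^{d+1}$, $a^\ast\in[d^{-1/(d-1)},1]^A$, $\|u^\ast\|_{d+1}^{d+1}\leq|A|+\rho^{d+1}$, and $\|b^\ast\|_{d+1}=\|\Phi(a^\ast)\|_{(d+1)/d}^{1/d}\leq\eta(d,|A|)^{-1}\rho(d,|A|)\,\epsilon\leq\rho$ (from the invariant-ball radius): \eqref{main1} and \eqref{main2} are then immediate, and (i)--(v) follow by also using $a^\ast(i)\geq1-\tfrac1{d-1}(R*(u^\ast)^d)(i)$ with $(R*(u^\ast)^d)(i)\leq\epsilon(|A|+\rho^{d+1})^{d/(d+1)}$ (from the ``near $1$'' branch) and the pointwise bound $u^\ast(i)\geq(\min_A a^\ast)^d\sum_{j\in A}Q(i-j)$ for $i\in A^c$ read directly off $u^\ast=Q*((u^\ast)^d)$.

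The main obstacle is the central fixed-point step, specifically the \emph{simultaneous calibration}: the single threshold $\eta(d,N)$ must at once (a) make the conditional ball invariant under the $A^c$-map, (b) make that map a genuine contraction — which over infinite $S$ cannot be replaced by a compactness argument, and which is the reason for inverting the scalar nonlinearities rather than iterating them forward (forward iteration of $t\mapsto t^d$ near $0$ lands at the wrong fixed point, and its derivative $d\rho^{d-1}$ only barely stays below $1$, whereas $\psi'$ and $\chi'$ are harmless once evaluated at the $O(\epsilon)$-sized arguments that actually occur) — and (c) keep the Brouwer box $K$ invariant under $G$, including the delicate requirement that the argument of $\chi$ never reach $s_{\max}=\max_t(t-t^d)$. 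Reconciling (a)--(c) optimally is exactly what forces $\rho(d,n)$ to be the root of $(d-1)\rho^{d+1}+dn\rho^{d-1}-n=0$: it is the maximizer of $\rho\mapsto(\rho-\rho^d)/(n+\rho^{d+1})^{d/(d+1)}$, whose maximal value is $\eta(d,n)$, and one needs in addition the two-sided bounds on $\eta(d,n)$ and the inequality $\rho(d,n)<d^{-1/(d-1)}$ recorded in Lemma~\ref{asymptotics}. Everything else — the boundary-law bookkeeping of the first step and the unwinding of the quantitative estimates — is routine but arithmetically heavy.
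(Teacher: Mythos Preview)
Your overall architecture matches the paper's proof exactly: reduce to the boundary-law equation $u=Q*u^d$, split $u$ into its $A$- and $A^c$-parts, solve the $A^c$-equation by a contraction argument for each fixed $A$-part, feed the result into the $A$-equation and apply Brouwer on the cube $[\lambda_d,1]^A$, then read off $\Delta=u^{d-1}$ and $\pi_\mu=u^{d+1}/\|u\|_{d+1}^{d+1}$. Your identification of $\rho(d,n)$ as the maximizer of $r\mapsto(r-r^d)/(n+r^{d+1})^{d/(d+1)}$ and of $\eta(d,n)$ as the maximal value, and your derivation of (i)--(v) from the resulting bounds on $u^*$, are all correct and coincide with the paper.

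The one substantive difference is on $A^c$. The paper does \emph{not} invert the scalar nonlinearity there: it iterates the forward map $F_{x_1}(x_0)=(Q*(x_0^d\sqcup x_1^d))|_{A^c}$ directly in $\ell^{d+1}(A^c)$, and shows it is a contraction on the ball of radius $r_q\le\rho$. This needs the nontrivial Lemma~\ref{confronto}, which proves that the self-map threshold $f_{d,n}$ lies strictly below the contraction threshold $g_d$ on $(0,\rho]$; this is where most of the technical work of the paper sits. Your remark that forward iteration ``lands at the wrong fixed point'' is therefore off --- it is precisely what the paper does, and it works.

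Your inverted alternative is a legitimate reparametrization, but the contraction claim is the gap. You assert a Lipschitz constant ``of order $\epsilon$''; however, at $\epsilon=\eta(d,n)$ the argument $w=(R*(a^d+\beta))|_{A^c}$ can reach $\rho-\rho^d$ in sup-norm, and there $\psi'(\rho-\rho^d)=d\rho^{d-1}/(1-d\rho^{d-1})$ is not small (for $d=2$, $n=1$ one has $\rho\approx0.453$, giving $\psi'\approx 9.6$). Combining this with the Young factor $\epsilon$ and the H\"older bookkeeping gives a Lipschitz bound of roughly $\tfrac{d\rho^{d-1}}{1-d\rho^{d-1}}\,\eta\approx 2.25$ in that example, not $<1$. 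So the inverted map does not obviously contract at the sharp threshold, and establishing that it does would require an argument of the same delicacy as Lemma~\ref{confronto}. In short: the strategy is right and identical to the paper's, but your proposed shortcut on $A^c$ does not avoid the hard lemma --- the paper's forward iteration, together with Lemma~\ref{confronto}, is what actually closes the contraction step under the stated hypothesis $\epsilon\le\eta(d,N)$.
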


Some comments are in order. First of all notice that the assumption \eqref{mainass} involves the cardinality of $A$ but neither the Abelian group $S$ nor the way in which $A$ sits in $S$. Thanks to \eqref{bb0},  \eqref{mainass} implies that $Q(i)<1$ for every $i\in S\setminus \{0\}$.

Condition \eqref{main2} tells us that the spin values in $A$ are preferred by the Gibbs measure $\mu$: the probability that a given vertex is not in $A$ is smaller than the probability that it is in the least likely of the spin values of $A$.
The bounds (iii) and (iv) control how the probability distribution $\pi_{\mu}$ giving us the single state marginals converges to the equidistribution on $A$ as the $\frac{d+1}{2}$-norm of $Q - \mathbbm{1}_{\{0\}}$ tends to zero.

Condition \eqref{main1} and its asymptotic $Q$-dependent refinements (i) and (ii) tell us that $A$ is the ``lazy'' set of the Gibbs measure $\mu$. Indeed, in the case $d=2$ \eqref{main1} says that if a vertex is in a state $i$ belonging to the set $A$, then its neighbouring vertices will prefer to remain in $i$ with probability larger than $\frac{1}{2}$; otherwise, they will prefer to change their state with probability larger than $\frac{1}{2}$. When the order $d$ increases, the probability threshold has the smaller value $\frac{1}{d}$: if the number $d+1$ of  vertices that influence the state at a given vertex gets larger, then a change of state becomes more probable for states in $A^c$, but possibly also for those in $A$.

The asymptotic $Q$-dependent bounds (i) and (ii) moreover quantify how the ``laziness'' of $A$ gets stronger and stronger as the $\frac{d+1}{2}$-norm of $Q - \mathbbm{1}_{\{0\}}$ tends to zero. Both the probability of changing state if $i$ is not in $A$ and the probability of keeping the same state for $i$ in $A$ tend to one. 

Given any state $i$, the probability to go from $i$ to some state in $A^c$ along some edge also tends to zero as $\epsilon:= \|Q - \mathbbm{1}_{\{0\}} \|_{\frac{d+1}{2}}$ tends to zero. This is clear for states $i$ in $A$. For states $i$ in $A^c$ it can be shown as follows. Thanks to the formula
\begin{equation}
\label{transitionfor0}
P_{\mu}(i,j) = \frac{\pi_{\mu}(j)^{\frac{d}{d+1}} Q(i-j)}{\bigl( Q * \pi_{\mu}^{\frac{d}{d+1}} \bigr) (i)},
\end{equation}
which is discussed in Remark \ref{transitionfor} below, the H\"older inequality and the bound (iii) imply
\[
\begin{split}
P_{\mu}(i,A^c) :&= \sum_{j\in A^c} P_{\mu}(i,j) = \frac{1}{(Q * \pi_{\mu}^{\frac{d}{d+1}})(i)} \sum_{j\in A^c} \pi_{\mu}^{\frac{d}{d+1}}(j) Q(i-j) \\ &\leq \frac{1}{(Q * \pi_{\mu}^{\frac{d}{d+1}})(i)}  \|\pi_{\mu}|_{A^c}\|_1^{\frac{d}{d+1}} \|Q\|_{d+1} \leq  \frac{c_7}{(Q * \pi_{\mu}^{\frac{d}{d+1}})(i)}  \|Q\|_{d+1} \, \epsilon^d,
\end{split}
\]
where $c_7:= c_3^{\frac{d}{d+1}}$. This already implies our claim for any fixed $i\in A^c$. Together with the lower bound (v), we obtain the further estimate
\begin{equation}
\label{prov}
P_{\mu}(i,A^c) \leq c_7 (1-c_6 \, \epsilon)^{- \frac{d}{d+1}} |A|^{\frac{d}{d+1}} \|Q\|_{d+1} \Bigl( \sum_{j\in A} Q(i-j) \Bigr)^{-d} \epsilon^d.
\end{equation}
Recalling that $\mathrm{dist}_Q$ denotes the ``distance function'' discussed in Remark \ref{distance}, we find
\[
\begin{split}
\Bigl( \sum_{j\in A} Q(i-j) \Bigr)^{-d} &\leq \bigl( \max_{j\in A} Q(i-j) \bigr)^{-d} = \min_{j\in A} Q(i-j)^{-d} = \min_{j\in A} e^{-d \, \log Q(i-j)} \\ &= 
\min_{j\in A} e^{d \, \mathrm{dist}_Q(i,j)} =  e^{d \, \min_{j\in A} \mathrm{dist}_Q (i,j) } = e^{d\, \mathrm{dist}_Q(i,A)}.
\end{split}
\]
Therefore, \eqref{prov} implies the estimate
\[
P_{\mu}(i,A^c) \leq c_7 (1-c_6 \, \epsilon)^{- \frac{d}{d+1}} |A|^{\frac{d}{d+1}} \|Q\|_{d+1} e^{d\, \mathrm{dist}_Q(i,A)} \, \epsilon^d,
\]
which tells us that as $\epsilon$ tends to zero $P_{\mu}(i,A^c)$ converges to zero uniformly for $i$ in any subset of $A^c$ whose elements have uniformly bounded distance from $A$. 

A graphical illustration of the above discussion is given in Figure \ref{fig: lazyset}.

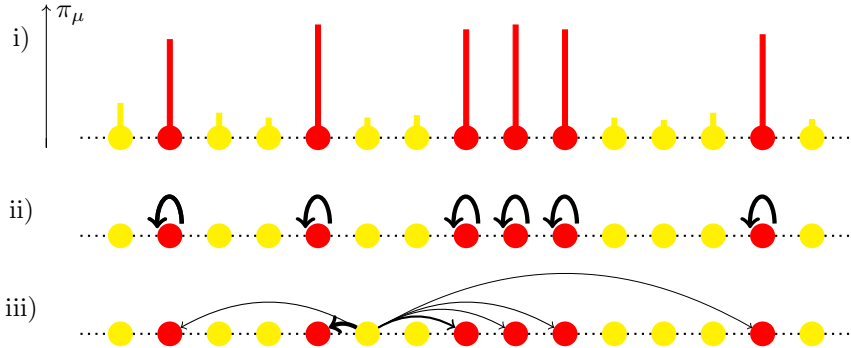
\begin{figure}[h]
	\centering
	\begin{tikzpicture}[scale=.65]
\node (ya1) at (-8.5,3.8) {};
\node (ya2) at (-8.5, 6.5) {};
\node (yl) at (-8,6.5) {$\pi_{\mu}$};	
\node at (-9,6) {i)};	
\node (g0) at (-8,4) {};
\node (g1) at (8,4) {};		
\node (c1) at (-7,4) [circle, draw=yellow,thick,fill=yellow,minimum size = 3mm, inner sep = 0]{};		
\node (c2) at (-6,4) [circle, draw=red,thick,fill=red,minimum size = 3mm, inner sep = 0]{};
\node (c3) at (-5,4) [circle, draw=yellow,thick,fill=yellow,minimum size = 3mm, inner sep = 0]{};
\node (c4) at (-4,4) [circle, draw=yellow,thick,fill=yellow,minimum size = 3mm, inner sep = 0]{};
\node (c5) at (-3,4) [circle, draw=red,thick,fill=red,minimum size = 3mm, inner sep = 0]{};
\node (c6) at (-2,4) [circle, draw=yellow,thick,fill=yellow,minimum size = 3mm, inner sep = 0]{};
\node (c7) at (-1,4) [circle, draw=yellow,thick,fill=yellow,minimum size = 3mm, inner sep = 0]{};
\node (c8) at (0,4) [circle, draw=red,thick,fill=red,minimum size = 3mm, inner sep = 0]{};
\node (c9) at (1,4) [circle, draw=red,thick,fill=red,minimum size = 3mm, inner sep = 0]{};
\node (c10) at (2,4) [circle, draw=red,thick,fill=red,minimum size = 3mm, inner sep = 0]{};
\node (c11) at (3,4) [circle, draw=yellow,thick,fill=yellow,minimum size = 3mm, inner sep = 0]{};
\node (c12) at (4,4) [circle, draw=yellow,thick,fill=yellow,minimum size = 3mm, inner sep = 0]{};
\node (c13) at (5,4) [circle, draw=yellow,thick,fill=yellow,minimum size = 3mm, inner sep = 0]{};
\node (c14) at (6,4) [circle, draw=red,thick,fill=red,minimum size = 3mm, inner sep = 0]{};
\node (c15) at (7,4) [circle, draw=yellow,thick,fill=yellow,minimum size = 3mm, inner sep = 0]{};
\draw[draw=yellow,fill=yellow] (-7.05,4.2) rectangle ++(0.1,0.5);
\draw[draw=red,fill=red] (-6.05,4.2) rectangle ++(0.1,1.8);
\draw[draw=yellow,fill=yellow] (-5.05,4.2) rectangle ++(0.1,0.3);
\draw[draw=yellow,fill=yellow] (-4.05,4.2) rectangle ++(0.1,0.2);
\draw[draw=red,fill=red] (-3.05,4.2) rectangle ++(0.1,2.1);
\draw[draw=yellow,fill=yellow] (-2.05,4.2) rectangle ++(0.1,0.2);
\draw[draw=yellow,fill=yellow] (-1.05,4.2) rectangle ++(0.1,0.25);
\draw[draw=red,fill=red] (-0.05,4.2) rectangle ++(0.1,2);
\draw[draw=red,fill=red] (0.95,4.2) rectangle ++(0.1,2.1);
\draw[draw=red,fill=red] (1.95,4.2) rectangle ++(0.1,2);
\draw[draw=yellow,fill=yellow] (2.95,4.2) rectangle ++(0.1,0.2);
\draw[draw=yellow,fill=yellow] (3.95,4.2) rectangle ++(0.1,0.15);
\draw[draw=yellow,fill=yellow] (4.95,4.2) rectangle ++(0.1,0.3);
\draw[draw=red,fill=red] (5.95,4.2) rectangle ++(0.1,1.9);
\draw[draw=yellow,fill=yellow] (6.95,4.2) rectangle ++(0.1,0.17);
\foreach \from/\to in 
{g0/c1,c1/c2,c2/c3,c3/c4,c4/c5,c5/c6,c6/c7,c7/c8,c8/c9,c9/c10,c10/c11,c11/c12,c12/c13,c13/c14,c14/c15,c15/g1}
\draw[dotted,thick] (\from) to (\to);
\draw[->] (ya1) |- ++(0,0) |- (ya2);
\node (f0) at (-8,2) {};
\node (f1) at (8,2) {};	
\node (yb) at (-9,2.5) {ii)};	
\node (b1) at (-7,2) [circle, draw=yellow,thick,fill=yellow,minimum size = 3mm, inner sep = 0]{};		
\node (b2) at (-6,2) [circle, draw=red,thick,fill=red,minimum size = 3mm, inner sep = 0]{};
\node (b3) at (-5,2) [circle, draw=yellow,thick,fill=yellow,minimum size = 3mm, inner sep = 0]{};
\node (b4) at (-4,2) [circle, draw=yellow,thick,fill=yellow,minimum size = 3mm, inner sep = 0]{};
\node (b5) at (-3,2) [circle, draw=red,thick,fill=red,minimum size = 3mm, inner sep = 0]{};
\node (b6) at (-2,2) [circle, draw=yellow,thick,fill=yellow,minimum size = 3mm, inner sep = 0]{};
\node (b7) at (-1,2) [circle, draw=yellow,thick,fill=yellow,minimum size = 3mm, inner sep = 0]{};
\node (b8) at (0,2) [circle, draw=red,thick,fill=red,minimum size = 3mm, inner sep = 0]{};
\node (b9) at (1,2) [circle, draw=red,thick,fill=red,minimum size = 3mm, inner sep = 0]{};
\node (b10) at (2,2) [circle, draw=red,thick,fill=red,minimum size = 3mm, inner sep = 0]{};
\node (b11) at (3,2) [circle, draw=yellow,thick,fill=yellow,minimum size = 3mm, inner sep = 0]{};
\node (b12) at (4,2) [circle, draw=yellow,thick,fill=yellow,minimum size = 3mm, inner sep = 0]{};
\node (b13) at (5,2) [circle, draw=yellow,thick,fill=yellow,minimum size = 3mm, inner sep = 0]{};
\node (b14) at (6,2) [circle, draw=red,thick,fill=red,minimum size = 3mm, inner sep = 0]{};
\node (b15) at (7,2) [circle, draw=yellow,thick,fill=yellow,minimum size = 3mm, inner sep = 0]{};
\foreach \from/\to in 
{f0/b1,b1/b2,b2/b3,b3/b4,b4/b5,b5/b6,b6/b7,b7/b8,b8/b9,b9/b10,b10/b11,b11/b12,b12/b13,b13/b14,b14/b15,b15/f1}
\draw[dotted,thick] (\from) to (\to);
\node (a2e) at (-5.95,2.25) {};	
\node (a5e) at (-2.95,2.25) {};	
\node (a8e) at (0.05,2.25) {};	
\node (a9e) at (1.05,2.25) {};	
\node (a10e) at (2.05,2.25) {};
\node (a14e) at (6.05,2.25) {};		
\draw[ultra thick,->] (a2e.0) arc (0:190:2.5mm and 5.5mm);
\draw[ultra thick,->] (a2e.0) arc (0:190:2.5mm and 5.5mm);
\draw[ultra thick,->] (a5e.0) arc (0:190:2.5mm and 5.5mm);
\draw[ultra thick,->] (a8e.0) arc (0:190:2.5mm and 5.5mm);
\draw[ultra thick,->] (a9e.0) arc (0:190:2.5mm and 5.5mm);
\draw[ultra thick,->] (a10e.0) arc (0:190:2.5mm and 5.5mm);
\draw[ultra thick,->] (a14e.0) arc (0:190:2.5mm and 5.5mm);
\node (e0) at (-8,0) {};
\node (e1) at (8,0) {};		
\node at (-9,0.5) {iii)};
\node (a1) at (-7,0) [circle, draw=yellow,thick,fill=yellow,minimum size = 3mm, inner sep = 0]{};		
\node (a2) at (-6,0) [circle, draw=red,thick,fill=red,minimum size = 3mm, inner sep = 0]{};
\node (a3) at (-5,0) [circle, draw=yellow,thick,fill=yellow,minimum size = 3mm, inner sep = 0]{};
\node (a4) at (-4,0) [circle, draw=yellow,thick,fill=yellow,minimum size = 3mm, inner sep = 0]{};
\node (a5) at (-3,0) [circle, draw=red,thick,fill=red,minimum size = 3mm, inner sep = 0]{};
\node (a6) at (-2,0) [circle, draw=yellow,thick,fill=yellow,minimum size = 3mm, inner sep = 0]{};
\node (a7) at (-1,0) [circle, draw=yellow,thick,fill=yellow,minimum size = 3mm, inner sep = 0]{};
\node (a8) at (0,0) [circle, draw=red,thick,fill=red,minimum size = 3mm, inner sep = 0]{};
\node (a9) at (1,0) [circle, draw=red,thick,fill=red,minimum size = 3mm, inner sep = 0]{};
\node (a10) at (2,0) [circle, draw=red,thick,fill=red,minimum size = 3mm, inner sep = 0]{};
\node (a11) at (3,0) [circle, draw=yellow,thick,fill=yellow,minimum size = 3mm, inner sep = 0]{};
\node (a12) at (4,0) [circle, draw=yellow,thick,fill=yellow,minimum size = 3mm, inner sep = 0]{};
\node (a13) at (5,0) [circle, draw=yellow,thick,fill=yellow,minimum size = 3mm, inner sep = 0]{};
\node (a14) at (6,0) [circle, draw=red,thick,fill=red,minimum size = 3mm, inner sep = 0]{};
\node (a15) at (7,0) [circle, draw=yellow,thick,fill=yellow,minimum size = 3mm, inner sep = 0]{};
\foreach \from/\to in 
{e0/a1,a1/a2,a2/a3,a3/a4,a4/a5,a5/a6,a6/a7,a7/a8,a8/a9,a9/a10,a10/a11,a11/a12,a12/a13,a13/a14,a14/a15,a15/e1}
\draw[dotted,thick] (\from) to (\to);
\path[->,thick] (a6) edge [bend left] node {} (a8);
\path[->,ultra thick] (a6) edge [bend right] node {} (a5);
\path[->, thin] (a6) edge [bend right] node {} (a2);
\path[->, thin] (a6) edge [bend left] node {} (a9);
\path[->] (a6) edge [bend left] node {} (a10);
\path[->,ultra thin] (a6) edge [bend left] node {} (a14);
\end{tikzpicture}
\caption{\label{fig: lazyset}\small The pictures show a part of the set $S=\Z$ with the bars in  picture i) marking the distribution of single-site marginals of the Gibbs measure $\mu$ from Theorem \ref{main1-thm}. Here, the red coloured circles and bars belong to $A \Subset S$. If the chain is in a state in $A$, then it prefers to stay in this state, see ii). On the other hand, being in a state which does not belong to $A$, the chain prefers jumps into states in $A$, with weights as indicated by the arrows in iii). Under suitable decaying conditions on $Q$, \eqref{transitionfor0} implies that shortest jumps are more likely.
}		
\end{figure}

\begin{rk}
\label{uniqueness0}
{\rm For $|A|=1$ and $S=\Z$, the existence of a Gibbs measure as in the above theorem has been proven in \cite{HeKu21a} under similar but not exactly equivalent assumptions on $Q$.}
\end{rk}

\begin{rk}{\rm ({\em Uniqueness}) If $|A|=1$ and $\|Q-\mathbbm{1}_{\{0\}}\|_{\frac{d+1}{2}} \leq \eta(d,1)$, then we can further show that the spatially homogeneous Markov-chain Gibbs measure $\mu$ satisfying \eqref{main1} is unique. See Remark \ref{uniqueness2} below. For larger sets $A$, we do not know whether the Gibbs measure $\mu$ satisfying \eqref{main1} is necessarily unique under the assumption $\|Q-\mathbbm{1}_{\{0\}}\|_{\frac{d+1}{2}} \leq \eta(d,|A|)$. 
By strengthening this assumption, we could get the following uniqueness statement: There exist positive numbers $\eta'(d,n)$ and $\delta(d,n)$ such that if $A$ is a finite subset of $S$ and $\|Q- \mathbbm{1}_{\{0\}} \|_{\frac{d+1}{2}} \leq \eta'(d,|A|)$ then the Markovian gradient specification which is induced by $Q$ on the regular $d$-tree with local state space $S$ has a {\em unique}  spatially homogeneous Markov-chain Gibbs measure $\mu$ whose single site marginal probability distribution $\pi_{\mu}$ satisfies
\[
\bigl\|\pi_{\mu} - {\textstyle \frac{1}{|A|}} \mathbbm{1}_A \bigr\|_1 < \delta(d,|A|).
\]
See Remark \ref{uniqueness1b} below.}
\end{rk}

\begin{rk}{\rm ({\em Affine independence)}  
Assuming that $\|Q-\mathbbm{1}_{\{0\}}\| \leq \eta(d,N)$ holds, the above theorem gives us a family of Gibbs measures $\{\mu_A\}_{A\in \mathcal{A}_N}$, where $\mathcal{A}_N$ denotes the set of all subsets $A$ of $S$ with $1\leq |A|\leq N$. Condition \eqref{main1} implies that these measures are pairwise distinct. More is actually true: none of the measures in the above family is a convex combination of the other ones, so each $\mu_A$ should be though as irreducible.  Indeed, this is a direct consequence of the fact that a non-trivial convex combination of spatially homogeneous Markov-chain Gibbs measures is never a Markov-chain Gibbs measure. In the case of a finite local state space $S$, this follows from Corollary 12.18 in \cite{Ge11}, but the proof directly generalizes to the case of a countably infinite state space, as all occurring sums are finite by the normalizability assumption on our boundary laws and all terms are strictly positive by the assumption of positivity of $Q$. In particular, we obtain that when $S$ is an infinite Abelian group and $\|Q-\mathbbm{1}_{\{0\}}\| \leq \eta(d,2)$ holds, then the convex set of all Gibbs measures $\mathcal{G}(\gamma)$ of the Gibbs specification induced by $Q$ is infinite dimensional also after modding out the action on it which is given by translations on $S$.}
\end{rk}

The proof of Theorem \ref{main1-thm} is based on an existence result for positive solutions $u\in \ell^{\frac{d+1}{d}}(S)$ of the normalized boundary law equation
\[
u = (Q * u)^d,
\]
which are suitably concentrated near the finite subset $A$. Boundary laws are discussed in Section \ref{bdrylaw-sec} and the proof of the existence result, which is based on a combined use of  the contraction mapping theorem and Brouwer's fixed point theorem, is discussed in Sections \ref{existence-sec} and \ref{lemmas-sec}. How to derive Theorem \ref{main1-thm} from this result is explained in Section \ref{proof-sec}.

\subsection{First applications}
\label{examples-sec}

The next two examples show how the assumption \eqref{mainass} translates for some concrete models. In the study of these models, we shall make use of the fact that the function $\eta$ satisfies the bounds
\begin{equation}
\label{bbb0}
\underline{c} \, d\, n^d \leq\eta(d,n)^{-(d+1)} \leq \overline{c}  \, d\, n^d \qquad \forall n\geq 1, \quad \forall d\geq 2,
\end{equation}
for suitable positive numbers $\underline{c}$ and $\overline{c}$, as proven in Lemma \ref{asymptotics} in Section \ref{existence-sec}.

\begin{ex}{\rm ({\em SOS model}) \label{SOS} Consider the case $S=\Z$ and $Q(i)=e^{-\beta |i|}$, where $\beta$ is a positive parameter modelling the inverse temperature.
Then
\[
\|Q - \mathbbm{1}_{\{0\}} \|_{\frac{d+1}{2}} = \left( \frac{2}{e^{\frac{d+1}{2} \beta}-1} \right)^{\frac{2}{d+1}},
\]
and the assumption \eqref{mainass} reads
\[
\beta \geq \underline{\beta}(d,n) := \frac{2}{d+1} \log \left( 1+ 2 \cdot \eta(d,n)^{-\frac{d+1}{2}} \right).
\]
Table \ref{table SOS} lists some approximate values of the threshold $\underline{\beta}$:

\medskip
\begin{table}[h]
	\centering
\begin{tabular}{l|l|l|l}
	$d$ & $n=1$& $n=2$ & $n=10$\\	\hline 
	$2$&$1.953$ &$2.367$  &$3.396$ \\
	$3$&$1.400$  &$1.870$ &$3.038$ \\
	$6$ &$0.810$  &$1.366$  & $2.719$
\end{tabular}
\caption{The threshold $\underline{\beta}(d,n)$ for the SOS model.}
\label{table SOS}
\end{table}
	
\medskip	

The asymptotic behaviour of this threshold for $d$ and/or $n$ tending to infinity can be determined as follows. By \eqref{bbb0}, $\underline{\beta}$ has the  bounds
\[
\frac{2}{d+1} \log \left( 1+ 2  \sqrt{\underline{c} d} n^{\frac{d}{2}} \right) \leq \underline{\beta}(d,n) \leq \frac{2}{d+1} \log \left( 1+ 2c \sqrt{\underline{c} d} n^{\frac{d}{2}} \right).
\]
By the inequalities
\[
\log x \leq \log (1+x)\leq \log x + \frac{1}{x}, \qquad \forall x> 0,
\]
we obtain the bounds
\[
\begin{split}
\frac{1}{d+1} \bigl( 2 \log (2 \sqrt{\underline{c}}) + \log d + d\log n \bigr) \leq \frac{1}{d+1} \left( 2 \log (2 \sqrt{\overline{c}}) +  \log d + d \log n + \frac{1}{\sqrt{\overline{c}}} \right),
\end{split}
\]
and hence
\[
\frac{\log d}{d} \violet{-} \frac{a}{d} + \frac{2}{3} \log n \leq \underline{\beta}(d,n)  \leq \frac{\log d}{d} + \frac{b}{d} + \log n \qquad \forall d\geq 2, \; \forall n\geq 1,
\]
for suitable positive numbers $a$ and $b$. In particular, for any fixed $n\in \N$ the threshold $\underline{\beta}(d,n)$ has size of the order $\log n$ for $d\rightarrow \infty$. When $n=1$, $\underline{\beta}(d,n)$ converges to zero and is asymptotic to $\frac{\log d}{d}$.
}
\end{ex}

\begin{ex}
{\rm ({\em Log potential}) Consider the case $S=\Z$ and $Q(i)=\frac{1}{(1+|i|)^{\beta}}$. Then 
\[
\|Q - \mathbbm{1}_{\{0\}} \|_{\frac{d+1}{2}}=\left(2 \zeta\bigl({\textstyle \frac{d+1}{2}} \beta\bigr)-2 \right)^{-\frac{2}{d+1}},
\]
where $\zeta$ denotes the Riemann zeta function. The assumption \eqref{mainass} now becomes
\begin{equation}
\label{log0}
\beta \geq \underline{\beta}(d,n) := \frac{2}{d+1} \zeta^{-1} \left( 1 + {\textstyle \frac{1}{2} \eta(d,n)^{\frac{d+1}{2}}} \right),
\end{equation}
where $\zeta^{-1}:(1,+\infty) \rightarrow (1,+\infty)$ denotes the inverse of the restriction of the  Riemann zeta function to the interval $(1,+\infty)$, on which this function is strictly monotonically decreasing  with image $(1,+\infty)$. Table \ref{table Log} lists some approximate values of the threshold $\underline{\beta}$:

\begin{table}
	\centering	
\begin{tabular}{l|l|l|l}
	$d$ & $n=1$& $n=2$ & $n=10$\\	\hline 
	$2$&$2.974$ &$3.527$  &$4.946$ \\
	$3$&$2.145$  &$2.773$ &$4.402$ \\
	$6$ &$1.240$  &$1.994$  & $3.924$
\end{tabular}
\caption{The threshold $\underline{\beta}(d,n)$ for the model with log potential.}
\label{table Log}
\end{table}
}

{\rm We now determine the asymptotics of $\underline{\beta}(d,n)$ for $d$ and/or $n$ tending to infinity. On the interval $(1,+\infty)$, the Riemann zeta functions satisfies the bounds
\begin{equation}
\label{log1}
1 + \frac{1}{2^s} \leq \zeta(s) = 1 + \frac{1}{2^s} + \sum_{k=3}^{\infty} \frac{1}{k^s} \leq 1 + \frac{1}{2^s} + \int_2^{\infty} \frac{dx}{x^s} = 1 + \left( 1 + {\textstyle \frac{2}{s-1} } \right) \frac{1}{2^s}.
\end{equation}
If $\overline{s}$ is the unique number in $(1,+\infty)$ such that $\zeta(\overline{s}) = \frac{3}{2}$, we have
\begin{equation}
\label{log2}
\zeta(s) \leq 1 + \frac{c}{2^s} \qquad \forall s\in [\overline{s},+\infty),
\end{equation}
where $c:= 1 + \frac{2}{\overline{s}-1}$. From the lower bound in \eqref{log1} we deduce the bound
\begin{equation}
\label{log3}
\zeta^{-1}(1+r) \geq \frac{\log \frac{1}{r}}{\log 2} \qquad \forall r\in (0,+\infty).
\end{equation}
Similarly, the upper bound \eqref{log2} implies
\begin{equation}
\label{log4}
\zeta^{-1}(1+r) \leq \frac{\log c + \log \frac{1}{r}}{\log 2}  \qquad \forall r\in \bigl(0,{\textstyle \frac{1}{2}} \bigr].
\end{equation}
By \eqref{log0}, \eqref{log3} and \eqref{bbb0} we find
\[
\begin{split}
\underline{\beta}(d,n) &\geq \frac{2}{(d+1) \log 2} \left( \log 2 + {\textstyle \frac{1}{2}} \log \eta^{-(d+1)} \right) \\ &\geq \frac{2}{(d+1) \log 2} \left( \log 2 + {\textstyle \frac{1}{2}} \log \underline{c} + {\textstyle \frac{1}{2}} \log d + {\textstyle \frac{d}{2}} \log n \right) \\ &\geq \frac{1}{\log 2} \left( \frac{\log d}{d} - \frac{a}{d} + \frac{2}{3} d\, \log n \right),
\end{split}
\]
for a suitable positive number $a$. Similarly, \eqref{log0}, \eqref{log4}, \eqref{bbb0} and the bound $\frac{1}{2} \eta(d,n)^{\frac{d+1}{2}}< \frac{1}{2}$ imply
\[
\begin{split}
\underline{\beta}(d,n) &\leq \frac{2}{(d+1) \log 2} \left(\log c + \log 2 + {\textstyle \frac{1}{2}} \log \eta^{-(d+1)} \right) \\ &\leq \frac{2}{(d+1) \log 2} \left(\log c + \log 2 + {\textstyle \frac{1}{2}} \log \overline{c} + {\textstyle \frac{1}{2}} \log d + {\textstyle \frac{d}{2}} \log n \right) \\ &\leq \frac{1}{\log 2} \left( \frac{\log d}{d} + \frac{b}{d} + d\, \log n \right),
\end{split}
\]
for a suitable number $b>0$. We conclude that the threshold $\underline{\beta}$ satisfies the lower and upper bounds
\[
\frac{1}{\log 2} \left( \frac{\log d}{d} - \frac{a}{d} + \frac{2}{3} d\, \log n \right) \leq \underline{\beta}(d,n) \leq \frac{1}{\log 2} \left( \frac{\log d}{d} + \frac{b}{d} + d\, \log n \right)
\]
for every $d\geq 2$ and $n\geq 1$.
Up to the multiplication by the factor $\frac{1}{\log 2}$, the asymtptotics of this threshold is analogous to the one we found in Example \ref{SOS} for the SOS  model. 
}
\end{ex}

\section{An application to the existence of delocalized gradient Gibbs measures}\label{sec GGM} 

In this section, we show how Theorem \ref{main1-thm} implies the existence of suitable \textit{gradient Gibbs measures} with \textit{height-dimension} $\Z$. The sets $A$ 
	which appear as a discrete parameter of the measures and 
	which played the role of localization sets for the \textit{Gibbs measures} of the previous section will now acquire a different role. Indeed, for the delocalized gradient Gibbs measures we discuss in this section, 
	there is no invariant single-site probability distribution in which the height variables would localize.
	Instead, the sets $A$ will govern the structure 
	of most probable increments along the edges, in a way that we will describe now.  We first review the necessary definitions.

\subsection{Gradient Gibbs measures}\label{subsec: DefGGM} 

The notion of a gradient Gibbs measure for lattice models has been established in \cite{FS97} and further exploited in \cite{Sh05}. In this subsection we present an adaption to the situation on the tree, which is based on \cite{KS17} and \cite{HeKu21a}. Consider the case $S = \Z$, in which we interpret a spin configuration $\omega \in \Omega$ as a height configuration and denote the local state space $\Z$ as the \textit{height-dimension} of the model.

Define the gradient projection $\nabla: \Omega \rightarrow \Z^{\vec{E}}; (\omega_x)_{x \in V} \mapsto (\omega_y-\omega_x)_{(x,y) \in \vec{E}}$.  Then
\[\Omega^\nabla:=\nabla(\Omega)=\{\zeta \in \Z^{\vec{E}} \mid \zeta_{(x,y)}=-\zeta_{(y,x)} \text{ for all } (x,y) \in \vec{E}\}\] is the set of all \textit{gradient configurations}.
For any oriented edge $b=(x,y) \in \vec{E}$ let $\eta_b: \Omega^\nabla \rightarrow S,\, \eta_b(\zeta):=\zeta_{b}$ denote the \textit{gradient spin projection} along $b$.  By construction, $\eta_{(x,y)} \equiv-\eta_{(y,x)}$ whenever $(x,y) \in \vec{E}$.
We endow $\Omega^\nabla$ with the product $\sigma$-algebra $\mathcal{F}^\nabla$ generated by all gradient spin projections, i.e.,  $\mathcal{F}^\nabla=\sigma(\eta_b \mid b \in \vec{E})$. 

Let $x_0 \in V$ be any fixed vertex. By connectedness of the tree and absence of cycles, prescription of any fixed height $s \in \mathbb{Z}$ at $x_0$ gives rise to a well-defined injective map 
\[
\Omega^\nabla\rightarrow \Omega, \qquad \zeta \mapsto \omega \quad \mbox{where } \nabla \omega = \zeta, \; \omega(x_0)=s.
\]
A gradient configuration on the tree can be thus considered as a \textit{relative height configuration} where two height configurations are equivalent iff one is obtained from the other one by a joint height shift $\theta_i(j):= j+i$. Hence we have the identification 
\[
\Omega^\nabla =\Omega /\Z. 
\]
Similar to the situation on the lattice \cite{Sh05}, we may think of $\mathcal{F}^\nabla$ (or more precisely the $\sigma$-algebra on $\Omega$ generated by $\nabla$) as the set of all events in $\mathcal{F}$ which are invariant under all joint height shifts $\theta_i$.

To lift the Gibbsian specification $\gamma$ for height configurations to a gradient specification $\gamma'$ for gradient configurations one has to consider that due to the absence of cycles on the tree the complement of any finite subtree $(\Lambda,E_\Lambda)$ decomposes into disjoint subtrees. This means that 
\[
\zeta_{\Lambda^c} \in \Omega^\nabla_{\Lambda^c}:=\{ \zeta \in \Z^{\vec{E}_\Lambda^c} \mid \zeta_{(x,y)}=-\zeta_{(y,x)} \text{ for all } (x,y) \in \vec{E}_\Lambda^c \}
\]
does not determine the relative heights at the boundary as an element of $\Z^{\partial \Lambda} /\Z$, i.e., up to a joint height shift at the boundary. 

Thus, the appropriate outer gradient $\sigma$-algebra $\mathcal{T}^\nabla_\Lambda$ has to implement both the information on the gradient spin variables outside $\Lambda$ and the information on the relative heights at the boundary. As the relative heights of the boundary are uniquely determined by the gradients inside $\Lambda \cup \partial \Lambda$ (each two vertices at the boundary are connected by a unique path in $\Lambda \cup \partial \Lambda$), these relative heights at the boundary can be expressed in terms of an $\mathcal{F}^\nabla$ measurable function $[\cdot]_{\partial \Lambda}: \Omega^\nabla \rightarrow \Z^{\partial \Lambda} / \Z$.
Hence
\begin{defn}[Definition 2.3 in \cite{KS17}]
The \textit{gradient-$\sigma$-algebra outside $\Lambda$} is defined as	
\begin{equation}
\mathcal{T}^\nabla_\Lambda:= \sigma((\eta_b)_{b \cap \Lambda = \emptyset}, \, [\eta]_{\partial \Lambda}  ) \subset \mathcal{F}^\nabla.
\end{equation}
\end{defn}
This allows to lift a specification $\gamma$ to a gradient specification $\gamma'$:
\begin{defn}[Definition 2.4 in \cite{KS17}]\label{def: GradientGibbs}
The \textit{gradient Gibbs specification}
	is defined as the family of probability kernels $(\gamma'_\Lambda)_{\Lambda \Subset V}$ from $(\Omega^\nabla, \mathcal{T}_\Lambda^\nabla)$ to $(\Omega^\nabla, \mathcal{F}^\nabla)$ given by
	\begin{equation}\label{grad}
	\int F(\rho) \gamma'_\Lambda(\text{d}\rho \mid \zeta) = \int F(\nabla \varphi) \gamma_\Lambda(\text{d}\varphi\mid\omega)
	\end{equation}
	for all bounded $\mathcal{F}^{\nabla}$-measurable functions $F$, where $\omega \in \Omega$ is any height configuration with $\nabla \omega = \zeta$.
\end{defn}
Finally, the DLR-equation for gradient measures on the tree reads:
\begin{defn}[Definition 2.5 in \cite{KS17}]
A measure $\eta \in \mathcal{M}_1(\Omega^\nabla)$ is called a \textit{gradient Gibbs measure (GGM)} if it satisfies the DLR equation 
\begin{equation}\label{eq: DLR}
\int \eta (d\zeta)F(\zeta)=\int \eta (\text{d}\zeta) \int \gamma'_{\Lambda}(\text{d}\tilde\zeta \mid \zeta) F(\tilde\zeta)
\end{equation}
for every finite subtree $(\Lambda,L_\Lambda)$ and for all bounded continuous functions $F$ on $\Omega^\nabla$.  
\end{defn}

\subsection{From Gibbs-measures for clock-models to integer-valued gradient Gibbs measures} 
\label{subsec: ConstructionOfGGM}

Consider the case $S=\Z$ and let $q \geq 2$ be an integer.
Assume that $Q \in \ell^1(\Z)$. Then 
\[
Q_q(\bar{i}):= \sum_{j \in \Z}Q(i+qj)
\] 
is a well defined function on the Abelian group $\Z_q:= \Z/q\Z$, which we think of as a "fuzzy" transfer operator on $\Z_q$. 
Then the Gibbsian specification $\gamma^q$ on $(\Z_q)^V$ associated with $Q_q$ via \eqref{eq: GibbsSpecification} describes a clock model.
As shown in \cite{KS17}, \cite{HeKu21a} and \cite{HeKu21}, any Gibbs measure on $(\Z_q)^V$ for $Q_q$ can be assigned an (integer-valued) gradient Gibbs measure on $\Omega^\nabla$.
In this subjection we briefly summarize the construction as described in \cite{HeKu21}.

For any $\bar{i} \in \Z_q$ define a conditional distribution $\rho_Q^q(\cdot \mid \bar{i})$ on $\Z$ equipped with the power set $\mathcal{P}(\Z)$ by
\begin{equation}\label{eq: rhokernel}
\rho_Q^q(j \mid \bar{i}):=\mathbbm{1}_{\{\bar{i}\}}(j) \frac{Q(j)}{Q^q(\bar{i})}.
\end{equation}
Then we can define a map $T_Q^q: \mathcal{M}_1(\Z_q^V, \mathcal{P}(\Z_q)^{\otimes{V}}) \rightarrow \mathcal{M}_1(\Omega^\nabla, \mathcal{F}^\nabla)$ from $q$-spin measures on vertices to integer-valued gradient measures in terms of the following two-step procedure:
\begin{equation} \label{eq: ConstructionOfGGM}
T^q_Q(\mu)(\eta_\Lambda=\zeta_\Lambda)=\sum_{\bar{\omega}_\Lambda \in \Z_q^\Lambda}\mu(\bar{\sigma}_\Lambda=\bar{\omega}_\Lambda)\prod_{\genfrac{}{}{0pt}{}{(x,y) \in {}^w\vec{L}}{x,y \in \Lambda}} \rho^q_{Q}(\zeta_{(x,y)} \mid \bar{\omega}_y-\bar{\omega}_x),
\end{equation}
where $\Lambda \subset V$ is any finite connected set and $w \in V$ is an arbitrary fixed vertex.

The assignment \eqref{eq: ConstructionOfGGM} describes a two-step procedure, where in the first step $\Z_q$-valued configurations are drawn from $\mu$ and in the second step  integer-valued gradients are edge-wise independently sampled conditioned on the $\Z_q$-valued increment along the respective edge. See also Figure \ref{Fig: ConstrGGM} below.

Then the following holds true without any assumption on spatial homogeneity:
\begin{thm}[Theorem 4.1 in \cite{KS17}, Theorem 2 in \cite{HeKu21} ]\label{thm: Gibbs property}
$T^q_Q$ maps Gibbs measures on $\Z_q^V$ for the fuzzy specification $\gamma^q$ to gradient Gibbs measures on $\Omega^\nabla$ for the gradient Gibbs specification $\gamma'$ \eqref{grad}.
\end{thm}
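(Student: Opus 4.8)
Fix $\mu\in\mathcal{G}(\gamma^q)$ and set $\nu:=T^q_Q(\mu)$. Here $\gamma'$ is the gradient lift \eqref{grad} of the Markovian gradient specification $\gamma$ on $\Z^V$ induced by $Q$, which is well defined since $Q\in\ell^1(\Z)\subset\ell^{\frac{d+1}{2}}(\Z)$; likewise each $\rho^q_Q(\cdot\mid\bar i)$ is a genuine probability distribution on $\Z$ because $Q>0$ and $Q_q(\bar i)=\sum_{j\in\bar i}Q(j)<\infty$. The plan is to verify the gradient DLR equation \eqref{eq: DLR} for $\nu$ and every finite subtree $\Lambda$. Since the topology on $\Omega^\nabla$ is the product topology, a standard monotone-class/approximation argument reduces this to testing \eqref{eq: DLR} against indicator functions of finite-dimensional cylinder events $\{\eta_\Delta=\zeta_\Delta\}$ for finite connected $\Delta\supseteq\Lambda\cup\partial\Lambda$, and all infinite sums that occur converge because $Q\in\ell^1(\Z)$. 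It is convenient to pass freely between the height and gradient pictures via the identification of $\mathcal{F}^\nabla$ with the joint-height-shift-invariant events in $\mathcal{F}$, so that the explicit finite-volume integer kernel \eqref{eq: GibbsSpecification} can be inserted directly on the right-hand side of \eqref{grad}.

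\textbf{The unfolding identity.}
The algebraic heart is the pointwise identity
\[
Q(j)=Q_q(\bar j)\,\rho^q_Q(j\mid\bar j),\qquad j\in\Z,
\]
which is immediate from \eqref{eq: rhokernel}. It says that each integer edge-weight $Q(\varphi_x-\varphi_y)$ appearing in \eqref{eq: GibbsSpecification} splits into the fuzzy edge-weight $Q_q$ of the reduced increment $\overline{\varphi_x-\varphi_y}$ times an edge-local resampling factor $\rho^q_Q$. Applying this to every factor in the numerator of $\gamma_\Lambda(\sigma_\Lambda=\tilde\varphi_\Lambda\mid\omega)$ — to the bulk edges $\{x,y\}\in E_\Lambda$ and to the boundary edges with $y\in\partial\Lambda$ — and to the partition function $Z_\Lambda(\omega_{\partial\Lambda})$ rewrites the right-hand side of \eqref{grad} as a (suitably normalized) sum over integer configurations $\tilde\varphi_\Lambda$ of the clock-model finite-volume weight of the $\Z_q$-reduction of $\tilde\varphi_\Lambda$ given $\bar\omega_{\partial\Lambda}$, times the product of the factors $\rho^q_Q$ over the edges of the subtree $\Lambda\cup\partial\Lambda$.

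\textbf{Assembly.}
Now expand the outer $\nu$-integral on the right-hand side of \eqref{eq: DLR} by means of the defining formula \eqref{eq: ConstructionOfGGM}: this produces a sum over $\Z_q$-valued configurations weighted by $\mu$, together with a product of factors $\rho^q_Q$ over the edges outside $\Lambda$. Combining this with the unfolded kernel from the previous step, the two families of $\rho^q_Q$-factors merge into a single product over all edges of the test window $\Delta$, precisely as in \eqref{eq: ConstructionOfGGM}; and the clock-model finite-volume weight pairs with the $\mu$-weight on the configuration outside $\Lambda$ and, because $\mu\in\mathcal{G}(\gamma^q)$ means $\mu=\mu\gamma^q_\Lambda$, is absorbed into a single $\mu$-weight on the full $\Z_q$-configuration. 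What remains is exactly the expression \eqref{eq: ConstructionOfGGM} for $\nu(\eta_\Delta=\zeta_\Delta)$, i.e. the left-hand side of \eqref{eq: DLR}. As $\Lambda$ and the cylinder were arbitrary, $\nu$ satisfies \eqref{eq: DLR}, i.e. $\nu$ is a gradient Gibbs measure for $\gamma'$.

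\textbf{Main obstacle.}
The core computation is short; the difficulty is bookkeeping. One must keep the orientation conventions straight, since the products in \eqref{eq: ConstructionOfGGM} run over edges pointing away from a fixed base vertex while \eqref{eq: GibbsSpecification} treats bulk and boundary edges asymmetrically, invoking the symmetry of $Q$ at each matching. More delicate is the role of $\mathcal{T}^\nabla_\Lambda$: after unfolding one must check that the data actually being conditioned on is $(\eta_b)_{b\cap\Lambda=\emptyset}$ together with the relative boundary heights $[\eta]_{\partial\Lambda}$; this uses that deleting $\Lambda$ disconnects the tree into the subtrees hanging off $\partial\Lambda$ and that $\Lambda\cup\partial\Lambda$ is itself a finite subtree with $\partial\Lambda$ a set of leaves, so that integer configurations on $\Lambda$ are in bijection with the gradient configurations on the edges of $\Lambda\cup\partial\Lambda$ that are compatible with $[\eta]_{\partial\Lambda}$. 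Finally, the normalizations must be handled with care: $Z_\Lambda(\omega_{\partial\Lambda})$ is \emph{not} literally the clock-model partition function, because the $\rho^q_Q$-resampling on the edges of $\Lambda\cup\partial\Lambda$ is constrained by the prescribed relative boundary heights rather than free, so the constants reconcile only after the summation against $\mu$. The cleanest way to organize all of this, and to avoid computing $Z_\Lambda$ explicitly, is to prove the sharper statement that $\gamma'_\Lambda(\cdot\mid\zeta)$ is a version of the $\nu$-conditional distribution given $\mathcal{T}^\nabla_\Lambda$, from which the correct normalization of $\gamma'_\Lambda$ follows automatically and \eqref{eq: DLR} is a special case.
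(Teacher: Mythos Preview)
The paper does not contain a proof of this theorem: it is stated as a citation of Theorem 4.1 in \cite{KS17} and Theorem 2 in \cite{HeKu21}, with no argument given here. So there is no ``paper's own proof'' to compare against.

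That said, your sketch is along the lines of the standard argument in those references. The unfolding identity $Q(j)=Q_q(\bar j)\,\rho^q_Q(j\mid\bar j)$ is indeed the algebraic core, and your identification of the bookkeeping issues (orientation conventions, the role of $\mathcal{T}^\nabla_\Lambda$ and the relative boundary heights $[\eta]_{\partial\Lambda}$, and the mismatch between $Z_\Lambda$ and the clock-model partition function) is accurate and these are precisely the points that require care. Your final remark, that it is cleanest to show $\gamma'_\Lambda(\cdot\mid\zeta)$ is a version of the $\nu$-conditional distribution given $\mathcal{T}^\nabla_\Lambda$, is a sound organizing principle. What you have written is a plausible outline rather than a complete proof; to turn it into one you would need to actually carry out the edge-by-edge matching you describe, in particular making explicit how the sum over integer configurations $\tilde\varphi_\Lambda$ compatible with a given $[\eta]_{\partial\Lambda}$ factors through the sum over $\Z_q$-configurations on $\Lambda$ with the prescribed fuzzy boundary, and verifying that the leftover $\rho^q_Q$-factors on the boundary edges are exactly absorbed into the outer $\nu$-expansion.
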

Note that the fact that Gibbs measures are mapped to gradient Gibbs measures as described in Theorem \cite{HeKrKu19} is a rare example for the preservation of the quasilocal Gibbs property, as it occurs throughout the whole phase diagram. In general, local maps tend to destroy the Gibbs property in strong coupling regions, see e.g., \cite{HeKrKu19,EFHR02}.
\begin{figure}[h]
\begin{minipage}{0.5\textwidth}
\begin{tikzpicture}[level 1/.style = {black, level distance = 1.5cm, sibling angle = 120},level 2/.style = {black,level distance =1.5cm, sibling angle = 70},
level 3/.style = {black,level distance =1cm, sibling angle = 50},
grow cyclic]
\node[circle,draw=black,thick,
inner sep=0pt,minimum size=4mm] {$w$}
child {node [circle, draw=black, thick, minimum size = 4mm, inner sep = 0] {$x$}
	child {node [circle, draw=white, thick, minimum size = 4mm, inner sep = 0] {}} 
	child {node [circle, draw=white, thick, minimum size = 4mm, inner sep = 0] {}}
	edge from parent [->]}
child {node [circle,draw=black,thick,
	inner sep=0pt,minimum size=4mm]{$y$}
	child {node [circle,draw=white,thick,
		inner sep=0pt,minimum size=4mm] {}}
	child {node [circle,draw=white,thick,
		inner sep=0pt,minimum size=4mm] {}}
	edge from parent [->] {}}
child {node [circle, draw=black, thick, minimum size = 4mm, inner sep = 0] {$z$}  
	child {node [circle, draw=white, thick, minimum size = 4mm, inner sep = 0] {}}
	child {node [circle, draw=white, thick, minimum size = 4mm, inner sep = 0] {}}
	edge from parent [->] {}};
\node (b1) at (-0.2,1.2) {\small$\bar{\omega}_z$};
\node (b2) at (0.3,0.4) {\small$\bar{\omega}_w$};
\node (b3) at (1.1,0.39) {\small$\bar{\omega}_y$};
\node (b4) at (-0.9,-0.9) {\small$\bar{\omega}_x$};
\node (c1) at (-1,0.5) {\small$\bar{\omega}_z-\bar{\omega}_w$};
\node (c2) at (0.7,-0.45) {\small$\bar{\omega}_y-\bar{\omega}_w$};
\node (d1) at (-0.8,0.15) {\small$\eta_{(w,z)}$};
\node (d2) at (0.6,-1) {\small$\eta_{(w,y)}$};
\node (e1) at (1,-1.5) {\small $\sim \rho_Q^q(\cdot \mid \bar{\omega}_y-\bar{\omega}_w)$};
\end{tikzpicture}
\end{minipage}
\begin{minipage}{0.45\textwidth}
\caption{\label{Fig: ConstrGGM}\small Construction of the measure $T_Q^q(\mu^q)$: In the first step, a $\Z_q$-valued configuration $\bar{\omega}$ is drawn from $\mu^q$. 
Conditional on the $\Z_q$-valued increment along the respective edge, the integer-valued gradient $\eta$ is then distributed with respect to $\rho_Q^q$ \eqref{eq: rhokernel}. 
}
\end{minipage}
\end{figure}
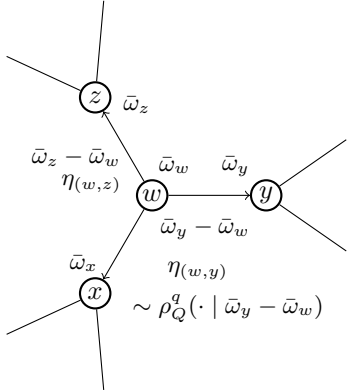

The map $T_Q^q$ as defined in \ref{eq: ConstructionOfGGM} has two important properties:
First, as we will see below, any integer-valued gradient Gibbs measure $\eta \in T_Q^q(\mathcal{G}(\gamma^q))$ is delocalized.

Second, for any gradient Gibbs measure $\nu^q$ which is given as the image of a homogeneous Markov-chain Gibbs measure $\mu^q$ on $\Z_q$ we can identify both the period $q$ and the distribution of the underlying Markov chain $\mu$ from  $\nu^q$ up to certain symmetries.
This motivates to call such a gradient Gibbs measure $\nu^q$ a \textit{delocalized gradient Gibbs measure of height-period $q$}.

The general delocalization statement of Theorem \ref{thm: deloc} below rests on Proposition 1 in \cite{HeKu21} in combination with extremal decomposition in $\mathcal{G}(\gamma^q)$. A proof is given at the end of this section. The less general identifiability result has already been proved in \cite{HeKu21a}. 

\begin{thm}\label{thm: deloc}
Let $q =2,3,\ldots$.
Then any $\nu$ in $T_Q^q(\mathcal{G}(\gamma^q)) \subset \mathcal{G}(\gamma')$ delocalizes in the sense that $\nu(W_n=k) \stackrel{n \rightarrow \infty}{\rightarrow} 0$ for any total increment $W_n$ along a path of length $n$ and any $k \in \Z$. 	
\end{thm}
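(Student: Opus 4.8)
The plan is to reduce the delocalization statement for $\nu \in T_Q^q(\mathcal{G}(\gamma^q))$ to a statement about the $\Z$-valued random walk increments that are sampled, edge by edge, via the kernels $\rho_Q^q(\cdot \mid \bar{\imath})$ in the construction \eqref{eq: ConstructionOfGGM}. First I would fix a vertex $w$ as the root used in that construction and a path $P$ of length $n$ starting at $w$, say through vertices $w=x_0,x_1,\dots,x_n$, all of them "pointing away" from $w$ in the sense of ${}^w\vec{E}$. By the very definition of $T_Q^q(\mu)$, under the measure $\nu$ the gradient variables $\eta_{(x_{k-1},x_k)}$ along this path are, conditionally on the underlying $\Z_q$-configuration $\bar\omega$, independent with $\eta_{(x_{k-1},x_k)} \sim \rho_Q^q(\cdot \mid \bar\omega_{x_k}-\bar\omega_{x_{k-1}})$, and the total increment is $W_n = \sum_{k=1}^n \eta_{(x_{k-1},x_k)}$. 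Hence, writing $\bar\mu$ for the marginal of $\bar\omega$ under $\mu$ and letting $(\xi_k)$ denote the conditionally independent increments, we have
\[
\nu(W_n = k) = \int \bar\mu(d\bar\omega)\, \mathrm{Prob}\Bigl( {\textstyle\sum_{k=1}^n} \xi_k = k \;\Big|\; \bar\omega \Bigr).
\]

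The key point is that, conditionally on $\bar\omega$, each increment $\xi_k$ has a distribution $\rho_Q^q(\cdot\mid \bar\omega_{x_k}-\bar\omega_{x_{k-1}})$ supported on a fixed coset of $q\Z$ in $\Z$, and — crucially — this distribution is non-degenerate: since $Q>0$ everywhere and $Q\in\ell^1(\Z)$, the kernel $\rho_Q^q(\cdot\mid\bar\imath)$ puts positive mass on more than one point of that coset, so each $\xi_k$ has strictly positive (and, by homogeneity of $Q$, uniformly bounded below over the finitely many cosets) variance. This is exactly the hypothesis needed to invoke a local-limit / concentration-function bound: there is a constant $C$, independent of $\bar\omega$ and of the path, such that $\sup_{k\in\Z}\mathrm{Prob}(\sum_{k=1}^n\xi_k = k \mid \bar\omega) \le C/\sqrt n$. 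I would get this from a standard Kolmogorov–Rogozin type estimate for the concentration function of a sum of independent (non-identically distributed) integer random variables, or alternatively from a direct Fourier/characteristic-function argument: the characteristic function of $\xi_k$ has modulus bounded away from $1$ on a fixed neighbourhood of the "bad" frequencies $2\pi\Z/q$ and on the complement, uniformly in $\bar\omega$, so $|\widehat{W_n}(t)|$ decays like $e^{-c n t^2}$ near those frequencies and the inversion formula yields the $O(n^{-1/2})$ bound.

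Combining the two displays, $\nu(W_n=k) \le C/\sqrt n \to 0$ as $n\to\infty$, uniformly in $k\in\Z$, which is the claim. Two remarks on how I would organise this cleanly. (a) The path in the statement is an arbitrary path of length $n$, not necessarily emanating from the distinguished root $w$ used in \eqref{eq: ConstructionOfGGM}; but any such path decomposes into at most two sub-paths each of which points away from some vertex, and the construction is independent of the choice of $w$, so one can always arrange that the increments along (at least half of) the path are conditionally independent as above, which already gives a $C/\sqrt{n/2}$ bound — the orientation bookkeeping is routine and I would keep it short. (b) It suffices to prove the statement for $\nu$ in the image of an \emph{extremal} Gibbs measure of $\gamma^q$, since every $\nu\in T_Q^q(\mathcal{G}(\gamma^q))$ is, by linearity of $T_Q^q$ and the extremal decomposition in $\mathcal{G}(\gamma^q)$, a mixture of such images, and the uniform bound $C/\sqrt n$ passes to mixtures; this is where the reference to Proposition~1 in \cite{HeKu21} enters, giving the needed structural input on $\mathcal{G}(\gamma^q)$.

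The main obstacle is establishing the $O(n^{-1/2})$ bound \emph{uniformly in the conditioning configuration $\bar\omega$} — i.e. making sure the variance (equivalently, the non-degeneracy of the characteristic functions away from frequency $0$ mod $2\pi/q$) of the single increments is bounded below by a constant that does not depend on $\bar\omega$ and not on which of the $n$ edges we look at. This is where the spatial homogeneity of $Q$ (so that there are only finitely many possible conditional increment laws $\rho_Q^q(\cdot\mid\bar\imath)$, $\bar\imath\in\Z_q$) and the strict positivity $Q>0$ are used in an essential way; once that uniformity is in hand, the rest is a routine application of a concentration-function inequality for sums of independent integer-valued random variables.
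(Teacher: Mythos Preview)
Your argument is correct and in fact more direct than the paper's. The paper black-boxes Proposition~1 of \cite{HeKu21}, which proves delocalization for $T_Q^q(\tilde\mu)$ when $\tilde\mu$ is a (possibly inhomogeneous) Markov-chain Gibbs measure, and then extends this to all of $\mathcal{G}(\gamma^q)$ by extremal decomposition: write $\mu=\int w_\mu(d\tilde\mu)\,\tilde\mu$ over extremal $\tilde\mu$, use that extremals are Markov chains, apply linearity of $T_Q^q$, and pass to the limit by dominated convergence. Your approach bypasses both the citation and the extremal decomposition entirely: conditioning on the full $\Z_q$-configuration $\bar\omega$, the increments along the path are \emph{by construction} independent with laws drawn from the finite list $\{\rho_Q^q(\cdot\mid\bar\imath):\bar\imath\in\Z_q\}$, each of which is non-degenerate since $Q>0$; a Kolmogorov--Rogozin (or Fourier) bound then gives $\sup_k\nu(W_n=k)\le C n^{-1/2}$ uniformly in $\mu$. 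This is strictly stronger than the paper's qualitative $o(1)$, and needs no Markov-chain structure of $\mu$ at all. Two small points: your remark~(b) slightly misidentifies Proposition~1 of \cite{HeKu21} --- that proposition \emph{is} the delocalization statement for Markov-chain inputs, not a structural result on $\mathcal{G}(\gamma^q)$ --- but your main argument makes the whole remark unnecessary anyway; and you reuse $k$ both as the target value and as the summation index, which you should clean up. Remark~(a) is also unnecessary: since the construction \eqref{eq: ConstructionOfGGM} is explicitly independent of the choice of root $w$, you may simply take $w$ to be an endpoint of the given path.
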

Note that Theorem \ref{thm: deloc} holds without the assumption of homogeneity, while for the identifiability result below we have to restrict to homogeneous measures.
\begin{thm}[Theorem 5 and Corollary 1 in \cite{HeKu21a}] \label{thm: ident}
Let $q\geq 2$ be an integer.
Let $\nu^q \in T_Q^q(\mathcal{G}(\gamma^q))$ be such that $\nu^q=T_Q^q(\mu^q)$ for some homogeneous Markov-chain Gibbs measure $\mu^q$ on $\Z_q^V$. Then the period $q$ is uniquely determined by $\nu$ up to integer-valued multiples. Moreover, the distribution of $\mu^q$ is uniquely determined by $\nu^q$ up to a joint height shift $\theta_i$ on $\Z_q$. 
\end{thm}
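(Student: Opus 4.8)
The plan is to invert the explicit two--step construction \eqref{eq: ConstructionOfGGM} and read off $q$ and $\mu^q$ from $\nu^q$, exploiting that the edge kernel $\rho_Q^q(\cdot\mid\bar i)$ of \eqref{eq: rhokernel} is concentrated on a single residue class modulo $q$. First I would record the reduction step: since $\rho_Q^q(\cdot\mid\bar i)$ is supported on $\{j\in\Z:\ j\equiv\bar i\pmod q\}$, under $\nu^q=T_Q^q(\mu^q)$ every integer gradient satisfies $\eta_{(x,y)}\equiv\bar\sigma_y-\bar\sigma_x\pmod q$ almost surely, where $\bar\sigma$ is the underlying $\Z_q$--configuration. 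Hence the edgewise reduction $\bar\zeta:=(\eta_b\bmod q)_{b\in\vec{E}}$ is a $\nu^q$--a.s.\ defined functional of the gradient configuration, and its law $\overline{\mu^q}$ is the push--forward of $\mu^q$ under the $\Z_q$--increment map $\bar\sigma\mapsto(\bar\sigma_y-\bar\sigma_x)_{(x,y)\in\vec{E}}$. Moreover \eqref{eq: ConstructionOfGGM} says that, conditionally on $\bar\zeta$, the variables $\eta_b$ are independent over unoriented edges with $\eta_b\sim\rho_Q^q(\cdot\mid\bar\zeta_b)$. Summing over the $q$ admissible values of the configuration at the root of a finite connected $\Lambda$ then gives the closed formula
\[
\nu^q(\eta_\Lambda=\zeta_\Lambda)=\overline{\mu^q}\Big(\bar\sigma_y-\bar\sigma_x\equiv\zeta_{(x,y)}\pmod q\ \ \forall\,\{x,y\}\in E_\Lambda\Big)\prod_{\{x,y\}\in E_\Lambda}\frac{Q(\zeta_{(x,y)})}{Q_q(\zeta_{(x,y)}\bmod q)}\,,
\]
which is orientation--independent because $Q$ and $Q_q$ are even. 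As $Q$ is fixed, $\nu^q$ is therefore determined by the pair $(q,\overline{\mu^q})$, and the whole task reduces to recovering this pair.

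To recover the period I would look for an intrinsic description of the subgroup $q\Z\subset\Z$. Fix a vertex $w$ and two distinct incident edges $b,b'$; by the formula above the conditional law $\nu^q(\eta_{b'}\in\,\cdot\mid\eta_b=j)$ is proportional, as a function of $j'$, to $\overline{\mu^q}(\bar\zeta_b=j\bmod q,\ \bar\zeta_{b'}=j'\bmod q)\,Q(j')/Q_q(j'\bmod q)$, and thus depends on $j$ only through $j\bmod q$; equivalently, $j\mapsto\nu^q(\eta_{b'}\in\,\cdot\mid\eta_b=j)$ is $q$--periodic. Using that $\mu^q$ is a Gibbs measure for $\gamma^q$ --- so that $\overline{\mu^q}$ has full support and, through the $\Z_q$--version of the transition formula \eqref{transitionfor0}, the rows $P_{\mu^q}(\bar i,\cdot)$ separate the residues $\bar i$ outside degenerate cases --- I would argue that the \emph{minimal} period of this map equals $q$, which pins down $q$. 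In the degenerate situations, essentially those in which $Q_q$ fails to separate residues (for instance $Q_q$ constant), the same procedure only returns a divisor of $q$; this is precisely what the phrase ``up to integer multiples'' in the statement records.

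Knowing $q$ we know $\overline{\mu^q}$, and it remains to show that $\overline{\mu^q}$ determines $\mu^q$ up to the height--shift action $\theta_i$ on $\Z_q$. Here I would pass to boundary laws: encode $\mu^q$ by a positive normalized boundary law $u:\Z_q\to(0,\infty)$ solving $u=(Q_q*u)^d$, so that $\pi_{\mu^q}\propto u^{(d+1)/d}$ and $P_{\mu^q}(\bar i,\bar j)=u(\bar j)\,Q_q(\bar i-\bar j)/(Q_q*u)(\bar i)$ (cf.\ \eqref{transitionfor0}), the shift $\theta_i$ corresponding to translating $u$. Evaluating $\overline{\mu^q}$ on finite subtrees and telescoping the boundary--law denominators, a single edge gives $\overline{\mu^q}(\bar\sigma_y-\bar\sigma_x=\bar a)\propto Q_q(\bar a)\,(u\star u)(\bar a)$, a star formed by a vertex $w$ and its $d+1$ incident edges gives $\overline{\mu^q}\big((\bar\sigma_{y_i}-\bar\sigma_w)_{i}=(\bar\zeta_i)_{i}\big)\propto\big(\prod_{i}Q_q(\bar\zeta_i)\big)\sum_{\bar c\in\Z_q}\prod_{i=1}^{d+1}u(\bar c+\bar\zeta_i)$, and larger balls give products of the known $Q_q$--factors times correlation polynomials in $u$ of increasing order. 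Dividing out the $Q_q$--factors recovers the autocorrelation $u\star u$ together with all these higher correlations; combining them with the fixed--point constraint $u=(Q_q*u)^d$ one then concludes that $u$ is determined up to a translation, hence $\mu^q$ up to some $\theta_i$.

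The main obstacle is the rigidity in the last step: one has to exclude ``homometric'' boundary laws, i.e.\ positive functions $u_1,u_2$ on $\Z_q$ that are not translates of one another yet produce identical gradient correlations. The second--order datum $u\star u$ alone does not suffice (this is the usual phase--retrieval ambiguity, leaving translation and reflection undetermined), so the argument must genuinely use the star and higher--order correlations together with the equation $u=(Q_q*u)^d$; the latter controls the zero set of $\widehat u$ enough to remove the reflection ambiguity and leave only translations. A smaller, secondary obstacle is making the period step airtight in the degenerate cases --- and the fact that one cannot do better there is exactly why the period can only be identified up to integer multiples.
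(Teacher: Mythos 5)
First, note that the paper does not prove this statement itself: Theorem \ref{thm: ident} is imported verbatim from \cite{HeKu21a} (``Theorem 5 and Corollary 1''), and the only thing the present text offers is a sketch of the mechanism, in the discussion after Corollary \ref{thm: PeriodicGGM} and in \eqref{eq: empirical-dfuz}. That mechanism is an ergodic-theorem argument along a single branch $x_1,x_2,\dots$: the increments $\bar\sigma_{x_{i+1}}-\bar\sigma_{x_i}$ are $\nu^q$-a.s.\ observable from the integer gradients (your reduction step), telescoping them reconstructs an actual trajectory $(\bar\sigma_{x_i}-\bar\sigma_{x_1})_i$ of the fuzzy chain up to one global random shift, and the a.s.\ convergence of the empirical single-site and pair distributions then recovers $\pi_{\mu^q}$ and $P_{\mu^q}$ modulo a joint shift $\theta_i$, hence the homogeneous Markov chain itself. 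This ``gauge-fixing along a ray'' is exactly what makes the identification easy and is what your finite-volume approach forgoes.

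Your proposal is correct and cleanly executed up to the reduction: the formula for $\nu^q(\eta_\Lambda=\zeta_\Lambda)$ in terms of $(q,\overline{\mu^q})$ is right, as are the edge and star correlation identities in terms of the boundary law $u$. But the decisive step --- that the hierarchy of gradient correlations, together with $u=(Q_q*u)^d$, determines $u$ up to translation --- is asserted rather than proven, and you yourself name it as ``the main obstacle.'' This is a genuine gap, not a technicality: $\overline{\mu^q}$ is only the law of $\mu^q$ on the quotient $\Z_q^V/\Z_q$, so all finite-dimensional gradient marginals are shift-averaged, autocorrelation-type functionals of $u$, and one lands squarely in a phase-retrieval/homometry problem. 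The standard route out (triple correlations from the star determine $u$ up to translation) requires the Fourier transform of $u$ to be nowhere vanishing, and your claim that the nonlinear fixed-point equation ``controls the zero set of $\widehat u$'' is unsubstantiated --- I see no reason why it should. The period-identification step has a similar acknowledged soft spot in the ``degenerate'' cases, and your reading of ``up to integer multiples'' as describing what your estimator returns does not obviously match the intended meaning (uniqueness of $q$ among all possible representations $\nu^q=T_Q^{q'}(\mu^{q'})$). To close the argument you should switch to the branch-wise telescoping and the Markov-chain ergodic theorem as in \cite{HeKu21a}; alternatively, if you want to salvage the finite-volume route, you must actually prove the rigidity statement for positive solutions of the boundary law equation on $\Z_q$, which is the entire content of the theorem.
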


\begin{proof}[Proof of Theorem \ref{thm: deloc}]
	By Proposition 1  in \cite{HeKu21}, we already know that for any (not necessarily homogeneous) $q$-state Markov-chain Gibbs measure $\mu^q \in \mathcal{G}(\gamma^q)$ the associated integer-valued gradient Gibbs measure $T_Q^q(\mu^q)$ delocalizes in the sense of $T_Q^q(\mu^q)(W_n=k) \stackrel{n \rightarrow \infty}{\rightarrow} 0$ for any fixed $k \in \Z$. 

	Now let $\mu \in \mathcal{G}(\gamma^q)$ be any Gibbs measure on $(\Z_q)^V$. By extremal decomposition, we have a unique probability measure $w_\mu$ on $(\text{ex}\,\mathcal{G}(\gamma^q), \text{ev}\,\text{ex} \, \mathcal{G}(\gamma^q))$, such that
	\begin{equation}
	\mu(\cdot)=\int_{\text{ex}\, \mathcal{G}(\gamma)}w_\mu(\text{d}\tilde{\mu})\tilde{\mu}(\cdot).
	\end{equation}
	Here,  $\text{ev}\,\text{ex} \, \mathcal{G}(\gamma)$ denotes the \textit{evaluation $\sigma$-algebra} on $\text{ex} \, \mathcal{G}(\gamma)$ generated by the \textit{evaluations} of the form $\pi_A: \tilde{\mu} \mapsto \tilde{\mu}(A)$,  where $A \in \mathcal{P}(\Z_q)^{V}$ is a fixed event.

	Let $n \in \mathbb{N}$ and $x,y \in V$ such that $\text{d}(x,y)=n$. Let $\Gamma(x,y)$ denote the shortest path connecting $x$ and $y$ and let $W_n$ denote the integer-valued total increment along $\Gamma(x,y)$ distributed with $T_Q^q(\mu)$.
	Recalling the definition of $T_Q^q$ in \eqref{eq: ConstructionOfGGM}, linearity of the integral gives
	\begin{equation}\label{eq: TqAffine}
	\begin{split}
	T_Q^q(\mu)(W_n=k)&=T_Q^q\left(\int_{\text{ex} \,\mathcal{G}(\gamma)}w_\mu(\text{d}\tilde{\mu})\tilde{\mu}\right)(W_n=k) \cr 
	&=\int_{\text{ex} \, \mathcal{G}(\gamma)}w_\mu(\text{d}\tilde{\mu})\left(T_Q^q(\tilde{\mu})(W_n=k) \right).
	\end{split}
	\end{equation}
	Now, any $\tilde{\mu} \in \text{ex}\, \mathcal{G}(\gamma^q)$ is a Markov chain. While this Markov chain is in general inhomogeneous, we can still employ  Proposition 1 in \cite{HeKu21}, which says that for any such $\tilde{\mu}$ we have
	$T_Q^q(\tilde{\mu})(W_n=k) \stackrel{n \rightarrow \infty}{\rightarrow} 0$. Hence, dominated convergence (eg. Corollary 6.26 in \cite{Kl14}) with integrable majorant $g(\tilde{\mu}) =1$ for all $\tilde{\mu} \in \text{ex}\, \mathcal{G}(\gamma)$ applied to  \eqref{eq: TqAffine} shows that $T_Q^q(\mu)(W_n=k) \stackrel{n \rightarrow \infty}{\rightarrow} 0$, which concludes the proof of Theorem \ref{thm: deloc}.
\end{proof}

\subsection{Existence of height-periodic gradient Gibbs measures} 

The existence result for localized Gibbs measures of Theorem \ref{main1-thm} above implies an existence criterion for an associated family of height-periodic gradient Gibbs measures: 

\begin{cor}\label{thm: PeriodicGGM}
Consider the $d$-regular tree with $d \geq 2$. Let the integer $q \geq 2$ be a fixed height-period and let $Q \in \ell^1(\Z)$ be a spatially homogeneous positive transfer operator normalized by $Q(0)=1$. Let $N \in \{1, \ldots, q-1\}$ and assume that the normalized fuzzy transfer operator $Q_q$ on $\Z_q$ satisfies
\[
\|Q_q - \mathbbm{1}_{\{0\}}\|_{\frac{d+1}{2}}  \leq \eta(d,N).
\]
Then for every $A\subset \Z_q$ with $1\leq |A|\leq N$ there exists a spatially homogeneous $q$-periodic delocalized gradient Gibbs measure $\nu$ of the form 
\[
\nu=T_{Q_q}(\mu), 
\]
where $\mu$ is the homogeneous Markov-chain Gibbs measure on $\Z_q$ with lazy set $A$ given by Theorem \ref{main1-thm}.
\end{cor}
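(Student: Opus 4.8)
The proof of Corollary \ref{thm: PeriodicGGM} is essentially a matter of correctly assembling the pieces that have already been established, so the plan is to verify that the hypotheses of the relevant earlier results are met and then chain them together.

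\textbf{Step 1: Reduce to a statement about the clock model on $\Z_q$.} Since $Q\in \ell^1(\Z)$, the folded operator $Q_q(\bar i):=\sum_{j\in\Z}Q(i+qj)$ is a well-defined, strictly positive, symmetric function on the finite group $\Z_q$, hence automatically lies in $\ell^{(d+1)/2}(\Z_q)=\R^{\Z_q}$; moreover $Q_q(\bar 0)=\sum_j Q(qj)\geq Q(0)=1>0$, so after the harmless normalization of Remark \ref{rk: EquivOfPot} we may assume $Q_q(\bar 0)=1$ (this rescaling does not change the induced specification $\gamma^q$, nor the kernel $\rho^q_Q$ up to the same constant, hence not $T^q_Q$). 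Thus $Q_q$ is a transfer operator on $(\Gamma^d,\Z_q)$ in the sense of Section \ref{sec: Definitions}.

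\textbf{Step 2: Apply Theorem \ref{main1-thm} to $Q_q$.} By hypothesis $\|Q_q-\mathbbm 1_{\{0\}}\|_{(d+1)/2}\leq \eta(d,N)$, which is precisely condition \eqref{mainass} for the group $S=\Z_q$ and the integer $N$. Therefore, for the given set $A\subset \Z_q$ with $1\leq |A|\leq N$, Theorem \ref{main1-thm} produces a spatially homogeneous Markov-chain Gibbs measure $\mu\in\mathcal{MG}(\gamma^q)$ on $\Z_q^V$ whose single-site marginals and transition matrix satisfy \eqref{main2} and \eqref{main1} with lazy set $A$; in particular $\mu$ is a genuine Gibbs measure for the fuzzy specification $\gamma^q$ associated with $Q_q$.

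\textbf{Step 3: Push forward to a gradient Gibbs measure.} Set $\nu:=T^q_{Q}(\mu)$ (equivalently $T_{Q_q}(\mu)$ in the paper's shorthand). Since $\mu\in\mathcal{G}(\gamma^q)$, Theorem \ref{thm: Gibbs property} gives $\nu\in\mathcal{G}(\gamma')$, i.e.\ $\nu$ is a gradient Gibbs measure on $\Omega^\nabla$. By Theorem \ref{thm: deloc}, any element of $T^q_Q(\mathcal{G}(\gamma^q))$ delocalizes: $\nu(W_n=k)\to 0$ as $n\to\infty$ for every $k\in\Z$ and every total increment $W_n$ along a path of length $n$; hence $\nu$ has height-period $q$ in the sense discussed after Theorem \ref{thm: deloc}. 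Spatial homogeneity of $\nu$ follows from spatial homogeneity of $\mu$ together with the tree-automorphism equivariance built into the defining formula \eqref{eq: ConstructionOfGGM} for $T^q_Q$ (the kernels $\rho^q_Q(\cdot\mid\bar i)$ depend only on the increment $\bar i$, so permuting the vertices by an automorphism of $\Gamma^d$ permutes the factors accordingly and leaves the law invariant because $\mu$ is invariant). This yields a $q$-periodic spatially homogeneous delocalized gradient Gibbs measure of the asserted form.

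\textbf{Main obstacle.} There is no deep obstacle here — the corollary is a corollary — but the one point requiring genuine care is the normalization bookkeeping in Steps 1–2: one must check that replacing $Q$ by $c\,Q$ (to get $Q(0)=1$) and then $Q_q$ by $c'\,Q_q$ (to get $Q_q(\bar0)=1$) leaves all the objects $\gamma^q$, $\rho^q_Q$, $T^q_Q$, and the GGM $\nu$ unchanged, so that the hypothesis $\|Q_q-\mathbbm 1_{\{0\}}\|_{(d+1)/2}\leq\eta(d,N)$ — stated for the \emph{normalized} $Q_q$ — can legitimately be fed into Theorem \ref{main1-thm}. A secondary minor point is making the homogeneity claim rigorous, i.e.\ writing out the automorphism-equivariance of \eqref{eq: ConstructionOfGGM}; this is routine given that the construction is edge-local with vertex-independent kernels.
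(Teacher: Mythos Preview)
Your proposal is correct and matches the paper's approach: the paper does not supply an explicit proof for this corollary, treating it as an immediate consequence of Theorem \ref{main1-thm} applied with $S=\Z_q$ and transfer operator $Q_q$, followed by Theorem \ref{thm: Gibbs property} and Theorem \ref{thm: deloc}. Your attention to the normalization bookkeeping (passing from $Q$ to the normalized $Q_q$) and to the spatial homogeneity of $\nu$ under $T^q_Q$ makes explicit what the paper leaves implicit.
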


An illustration of the construction of such a gradient Gibbs measure $\nu$ in the case $q=5$ is given in Figure \ref{fig: nuqQ} below.

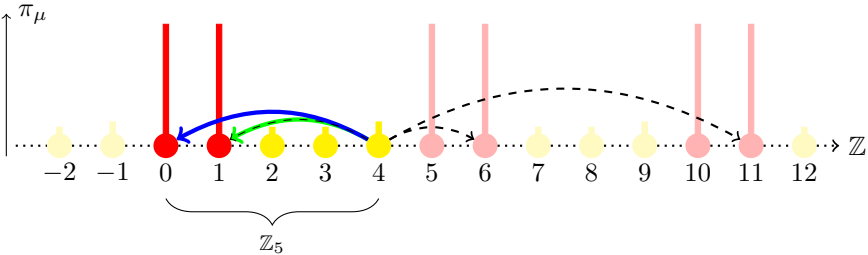
\begin{figure}[h]
	\centering
		\begin{tikzpicture}[scale=.7]
\node (ya1) at (-4.5,3.8) {};
\node (ya2) at (-4.5,6.5) {};
\node (yl) at (-7.5,6.5) {$\pi_{\mu}$};		
\node (g0) at (-8,4) {};
\node (g1) at (8,4) {$\Z$};		
\node (c1) at (-7,4) [circle, draw=yellow!30,thick,fill=yellow!30,minimum size = 3mm, inner sep = 0]{};	
\node at (-7,3.5) {$-2$};	
\node (c2) at (-6,4) [circle, draw=yellow!30,thick,fill=yellow!30,minimum size = 3mm, inner sep = 0]{};
\node at (-6,3.5) {$-1$};	
\node (c3) at (-5,4) [circle, draw=red,thick,fill=red,minimum size = 3mm, inner sep = 0]{};
\node at (-5,3.5) {$0$};	
\node (c4) at (-4,4) [circle, draw=red,thick,fill=red,minimum size = 3mm, inner sep = 0]{};
\node at (-4,3.5) {$1$};	
\node (c5) at (-3,4) [circle, draw=yellow,thick,fill=yellow,minimum size = 3mm, inner sep = 0]{};
\node at (-3,3.5) {$2$};	
\node (c6) at (-2,4) [circle, draw=yellow,thick,fill=yellow,minimum size = 3mm, inner sep = 0]{};
\node at (-2,3.5) {$3$};	
\node (c7) at (-1,4) [circle, draw=yellow,thick,fill=yellow,minimum size = 3mm, inner sep = 0]{};
\node at (-1,3.5) {$4$};	
\node (c8) at (0,4) [circle, draw=red!30,thick,fill=red!30,minimum size = 3mm, inner sep = 0]{};
\node at (0,3.5) {$5$};	
\node (c9) at (1,4) [circle, draw=red!30,thick,fill=red!30,minimum size = 3mm, inner sep = 0]{};
\node at (1,3.5) {$6$};	
\node (c10) at (2,4) [circle, draw=yellow!30,thick,fill=yellow!30,minimum size = 3mm, inner sep = 0]{};
\node at (2,3.5) {$7$};	
\node (c11) at (3,4) [circle, draw=yellow!30,thick,fill=yellow!30,minimum size = 3mm, inner sep = 0]{};
\node at (3,3.5) {$8$};	
\node (c12) at (4,4) [circle, draw=yellow!30,thick,fill=yellow!30,minimum size = 3mm, inner sep = 0]{};
\node at (4,3.5) {$9$};	
\node (c13) at (5,4) [circle, draw=red!30,thick,fill=red!30,minimum size = 3mm, inner sep = 0]{};
\node at (5,3.5) {$10$};	
\node (c14) at (6,4) [circle, draw=red!30,thick,fill=red!30,minimum size = 3mm, inner sep = 0]{};
\node at (6,3.5) {$11$};	
\node (c15) at (7,4) [circle, draw=yellow!30,thick,fill=yellow!30,minimum size = 3mm, inner sep = 0]{};
\node at (7,3.5) {$12$};	
\draw[draw=yellow!30,fill=yellow!30] (-7.05,4.2) rectangle ++(0.1,0.15);
\draw[draw=yellow!30,fill=yellow!30] (-6.05,4.2)rectangle ++(0.1,0.25);
\draw[draw=red,fill=red] (-5.05,4.2)  rectangle ++(0.1,2.1);
\draw[draw=red,fill=red] (-4.05,4.2) rectangle ++(0.1,2.1);
\draw[draw=yellow,fill=yellow] (-3.05,4.2) rectangle ++(0.1,0.15);
\draw[draw=yellow,fill=yellow] (-2.05,4.2) rectangle ++(0.1,0.15);
\draw[draw=yellow,fill=yellow] (-1.05,4.2) rectangle ++(0.1,0.25);
\draw[draw=red!30,fill=red!30] (-0.05,4.2) rectangle ++(0.1,2.1);
\draw[draw=red!30,fill=red!30] (0.95,4.2) rectangle ++(0.1,2.1);
\draw[draw=yellow!30,fill=yellow!30] (1.95,4.2) rectangle ++(0.1,0.15);
\draw[draw=yellow!30,fill=yellow!30] (2.95,4.2) rectangle ++(0.1,0.15);
\draw[draw=yellow!30,fill=yellow!30] (3.95,4.2)rectangle ++(0.1,0.25);
\draw[draw=red!30,fill=red!30] (4.95,4.2) rectangle ++(0.1,2.1);
\draw[draw=red!30,fill=red!30] (5.95,4.2) rectangle ++(0.1,2.1);
\draw[draw=yellow!30,fill=yellow!30] (6.95,4.2) rectangle ++(0.1,0.15);
\foreach \from/\to in 
{g0/c1,c1/c2,c2/c3,c3/c4,c4/c5,c5/c6,c6/c7,c7/c8,c8/c9,c9/c10,c10/c11,c11/c12,c12/c13,c13/c14,c14/c15}
\draw[dotted,thick] (\from) to (\to);
\draw[->,dotted,thick] (c15)--(g1);
\draw[->] (-8,3.8)--(-8,6.5);
\path[->,ultra thick,green] (c7) edge [bend right] node {} (c4);
\path[->,ultra thick,blue] (c7) edge [bend right] node {} (c3);
\path[->,thin,black,dashed] (c7) edge [bend right] node {} (c4);
\path[->,thick,black,dashed] (c7) edge [bend left] node {} (c9);
\path[->,thick,black,dashed] (c7) edge [bend left] node {} (c14);
\draw [decorate,decoration={brace,amplitude=10pt},xshift=0cm,yshift=-0.5cm]
(-1,3.5)--(-5,3.5)  node [black,midway,yshift=-0.6cm]
{\footnotesize $\Z_5$};
\end{tikzpicture}
\caption{\label{fig: nuqQ} \small The gradient Gibbs measure $\nu$ associated to the subset $A:=\{0,1\} \subset \Z_5$: two main transitions of the fuzzy chain $\mu$ in blue and green and the distribution of single-site marginals $\pi_{\mu}$ of $\mu$ concentrated on $\{0,1\}$. The green jump from $4$ to $1$ of the chain $\mu$ allows jumps of height $-3+5\Z$ for $\nu$, whose conditional distribution is according to $\rho^5_Q(\cdot \mid \bar{2})$ (see \eqref{eq: rhokernel}). Three of these possible jumps are marked by the dashed black arrows.}
\end{figure}	

How do the period $q$ and the concrete choice of the lazy set $A \subset \Z_q$ affect the associated gradient Gibbs measure $\nu_A^q$? The answer lies in (the proof of) Theorem \ref{thm: ident} above:
Considering the sequence of empirical distributions of increments along a branch of the tree gives in particular the sequence of empirical distributions of increments of the homogeneous fuzzy chain $\mu_A^q$. By the ergodic Theorem for Markov chains this sequence converges. The knowledge of the limit is equivalent to the knowledge of the stationary 
distribution on $\Z_q$ modulo cyclic shift, from which the set A can be read off. In particular, also the period $q$ can be recovered. For more details, see also the proof of Corollary 1 in \cite{HeKu21}. 

More can be said in the present case. 
	Consider the joint empirical distribution along a branch of the tree $x_1,x_2,\dots$ 
	for fuzzy spin values and 
	integer-valued increments
	of the form 
	\begin{equation}
	\label{eq: empirical-d}
	\frac{1}{n}\sum_{i=1}^n \delta_{\bar{\sigma}_{x_i},\sigma_{x_{i+1}}-\sigma_{x_{i}}}
	\end{equation}
	which is a random measure on  $\Z_q\times \Z$. It is important in the case 
	of delocalized gradient Gibbs measures to consider fuzzy spins $\bar{\sigma}_{x_i}$ in the first entry, 
	as the empirical measures for spins $\sigma_{x_i}$ would not converge. 
	
We claim that there is the $\nu_A^q$-a.s. convergence
	\begin{equation}
	\label{eq: empirical-dfuz}
	\Bigl(\frac{1}{n}\sum_{i=1}^n \delta_{\bar{\sigma}_{x_i},\sigma_{x_{i+1}}-\sigma_{x_{i}}}\Bigr)(\bar{a},c)
	\stackrel{n \rightarrow \infty}{\longrightarrow}\frac{\pi_{\mu^q_A}(\bar{a}) \pi_{\mu^q_A}(\bar{a}+\bar{c})^{\frac{d}{d+1}}}{
		Q^q * \pi_{\mu^q_A}^{\frac{d}{d+1}}(\bar{a})
	}Q(c)
	\end{equation}
Before we prove this statement, let us discuss what it tells us about the correlation structure 
	of the gradient state. 
	First note that  jump probabilities of increment size $c$, for fixed mod $q$ fuzzy classes $\bar{a},\bar{c}$ 
	depend only on the multiplicative factor $Q(c)$, 
	which strongly suppresses large jumps. On the other hand,  
	recall that by the concentration bounds of Theorem 3.1 the mod $q$ fuzzy measure
	$\pi_{\mu^q_A}$ concentrates strongly 
	on the set $A\subset \Z_q$ where it equals up to small errors the equidistribution. 
	So, \eqref{eq: empirical-dfuz} means that 
	the delocalized measure $\nu_A^q$ inherits a structure from the underlying measure 
	$\mu_A^q$, in which fuzzy jumps occur 
	mostly from $A$ to $A$, while arbitrarily large jumps in $\Z$ occur, but are suppressed and 
	modulated via the summable $Q$.  
	An example is discussed in Figure \ref{fig: nuqQ} above.

Finally, to prove the a.s. convergence \eqref{eq: empirical-dfuz} 
for a fixed pair $(\bar a, c)$, denote by $\bar c$ for the mod-$q$ class 
of $c$ and use the hidden Markov model  structure \ref{eq: ConstructionOfGGM} of the gradient measure to write 
the l.h.s. of \eqref{eq: empirical-dfuz} in the product form 
	\begin{equation}
	\label{eq: empirical-dfuz2}
	\frac{|\Lambda_n(\bar a, \bar c)|}{n} \times \frac{1}{|\Lambda_n(\bar a, \bar c)|}\sum_{i\in \Lambda_n(\bar a, \bar c)}
	Y_{i}(c)
	\end{equation}
where $\Lambda_n(\bar a, \bar c)=\{1\leq i \leq n, 
(\bar \sigma_i, \bar \sigma_{i+1}-\bar \sigma_i)=(\bar a,\bar c)\}$.  Here 
the variables $Y_i(c)$ are independent Bernoulli with success probability $\rho_Q^q(c \mid \bar{c})=Q(c)/Q^q(\bar c)$. 

By the Birkhoff a.s. ergodic theorem applied 
to the first factor in \eqref{eq: empirical-dfuz2} which we recognize as the 
pair empirical distribution of the irreducible hidden Markov chain  $\mu_A^q$, there is a set 
of full measure for $\mu_A^q$ (and hence for $\nu_A^q$) such that the first term in the product 
converges to its expectation. On this full measure set, in particular $|\Lambda_n(\bar a, \bar c)|\uparrow\infty$ by 
positivity of $Q^q$, and conditionally on that 
we can apply the SLLN for the independent variables $Y_j(c)$ to see that also the second term 
converges to its expectation $\frac{Q(c)}{Q^q(\bar{c})}$. Plugging in these expectations the claimed 
a.s. limit of \eqref{eq: empirical-dfuz}
follows.

\section{Proof of Theorem \ref{main1-thm}}
\label{sec: Proofs}

\subsection{Boundary laws and Gibbs measures}
\label{bdrylaw-sec}

As established in \cite{Z83}, tree-indexed Markov-chain Gibbs measures for nearest-neighbour interactions and a countable local state space can be described in terms of the solutions to a recursive system of boundary law equations on the tree. In this subsection, we briefly outline this formalism for the specific case of spatially homogeneous Gibbs measures for gradient interactions on the $d$-regular tree.

\begin{defn}
\label{defn:bdrylaw}
A {spatially homogeneous boundary law} for a transfer operator $Q$ is a positive function $u\in \ell^{\frac{d+1}{d}}(S)$ such that 
\begin{equation}
\label{eq: boundarylaw}
u =c \, (Q* u)^d
\end{equation}
for some $c>0$.
\end{defn}	
	
\begin{rk}
\label{rk: ImplicitNormalization}
{\rm If $(u,c)$ is a solution of \eqref{eq: boundarylaw} and $a$ is any positive number, then $v:=au$ satisfies
\[
v=c'(Q \ast v)^d
\] 
with $c'=a^{1-d}c$, and hence $v$ is also a boundary law. Boundary laws differing by a multiplicative constant are considered to be equivalent.  By multiplying $u$ by a suitable constant, we can always assume that $c=1$ in \eqref{eq: boundarylaw}.}
\end{rk}

Now, the relation between boundary laws and tree-indexed Markov chains reads:

\begin{thm}[See Theorem 3.2 in \cite{Z83}] \label{thm: Zachary}
Let $Q$ be a transfer operator. Then for the Markov specification $\gamma$ associated to $Q$  we have:
\begin{enumerate}[(i)]
\item Each spatially homogeneous boundary law $u$ for $Q$ defines a unique spatially homogeneous tree-indexed Markov-chain Gibbs measure $\mu \in \mathcal{MG}(\gamma)$ with marginals
\begin{equation}\label{BoundMC}
\mu(\sigma_{\Lambda \cup \partial \Lambda}=\omega) = \frac{1}{Z_\Lambda} \prod_{y \in \partial \Lambda} u (\omega_y) \prod_{\genfrac{}{}{0pt}{}{\{x,y\} \in L}{\{x,y\} \cap \Lambda \neq \emptyset}} Q(\omega_y-\omega_x),
\end{equation}
for any connected set $\Lambda \Subset V$ and $\omega\in S^{\Lambda \cup \partial \Lambda}$,
where $Z_{\Lambda}$ is the normalization constant which turns $\mu$  into a probability measure.
\item Conversely, every spatially homogeneous tree-indexed Markov-chain Gibbs measure $\mu \in \mathcal{MG}(\gamma)$ admits a representation of the form (\ref{BoundMC}) in terms of a spatially homogeneous boundary law $u$ which is unique up to a constant positive factor.
\end{enumerate}
\end{thm}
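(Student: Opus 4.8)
This is Zachary's boundary-law correspondence \cite{Z83}, specialized to spatially homogeneous gradient interactions on the $d$-regular tree, and I would adapt his argument as follows.

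For part (i), I would first use Remark \ref{rk: ImplicitNormalization} to normalize $c=1$, so that $u=(Q*u)^d$, and then for every connected $\Lambda\Subset V$ introduce the finite-volume probability measure $\mu_{\Lambda}$ on $\Omega_{\Lambda\cup\partial\Lambda}$ given by the right-hand side of \eqref{BoundMC}, whose partition function is finite by the hypotheses $Q\in\ell^{(d+1)/2}(S)$, $u\in\ell^{(d+1)/d}(S)$ together with Young's convolution inequality (see Lemma 1 of \cite{HeKu21a}). The core step is to show that $\{\mu_{\Lambda}\}$ is a consistent (projective) family along an increasing exhaustion of $V$ by connected finite sets, so that Kolmogorov's extension theorem yields a measure $\mu$ on $(\Omega,\mathcal{F})$. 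Consistency reduces to a telescoping computation: given connected $\Lambda'\subset\Lambda$ with $\Lambda'\cup\partial\Lambda'\subset\Lambda$, I would sum the density \eqref{BoundMC} for $\Lambda$ over the spins on the shell $(\Lambda\cup\partial\Lambda)\setminus(\Lambda'\cup\partial\Lambda')$ by peeling the resulting finite forest leaf by leaf; each such step replaces a factor $u(\omega_z)\,Q(\omega_z-\omega_x)$ by $(Q*u)(\omega_x)$ (this uses the evenness of $Q$), and every internal vertex together with its $d$ already-peeled children produces $(Q*u)^{d}=u$ by the boundary-law equation, so that what remains is exactly the density \eqref{BoundMC} for $\Lambda'$ with the same normalization. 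I would then verify the three asserted properties of $\mu$: spatial homogeneity is immediate because $u$ and $Q$ do not depend on the vertex, so \eqref{BoundMC} is invariant under tree automorphisms; the DLR equation for $\gamma$ follows by computing $\mu(\,\cdot\mid\mathcal{F}_{\Lambda^c})$ from \eqref{BoundMC} on a large connected set and observing that the outer $u$-factors and all $Q$-factors not touching $\Lambda$ are $\mathcal{F}_{\Lambda^c}$-measurable and cancel between numerator and denominator, leaving precisely $\gamma_{\Lambda}(\,\cdot\mid\omega)$ from \eqref{eq: GibbsSpecification}; and the tree-indexed Markov property follows because, by the same leaf-peeling identity, $\mu(\sigma_y=i\mid\mathcal{F}_{]-\infty,xy]})$ collapses to a function of $\omega_x$ alone. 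Uniqueness in (i) holds because a spatially homogeneous Markov-chain Gibbs measure is determined by its single-site marginal and one-step transition probabilities, hence by the prescribed marginals \eqref{BoundMC}.

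For part (ii), given a spatially homogeneous $\mu\in\mathcal{MG}(\gamma)$ I would set $\pi:=\pi_{\mu}$ and $P:=P_{\mu}$, note that $\pi>0$ (forced by $Q>0$ in the DLR equation) and $\pi\in\ell^1(S)$, and then derive formula \eqref{transitionfor0} for $P$ together with the identity $\pi=Z^{-1}\bigl(Q*\pi^{d/(d+1)}\bigr)^{d+1}$: conditioning $\sigma_y$ on its $d+1$ neighbours is, by the tree Markov property (the neighbour spins separate $y$ from the rest of the tree), the single-site kernel $\gamma_{\{y\}}\propto\prod_{x\sim y}Q(\,\cdot-\omega_x)$, and feeding this into the Markov-chain consistency relations (integrating out all neighbours but one against $\pi$ and $P$) pins down $P$ and $\pi$ in the stated form. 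I would then set $u:=\pi^{d/(d+1)}$: this is positive, lies in $\ell^{(d+1)/d}(S)$ since $\pi\in\ell^1(S)$, and the identity just derived says that a scalar multiple of $u$ solves $u=(Q*u)^d$, i.e.\ is a boundary law. Finally, the measure produced from this boundary law via part (i) has the same single-site marginal and transition matrix as $\mu$, hence coincides with $\mu$, which gives the representation \eqref{BoundMC}; and uniqueness of $u$ up to a positive constant follows because the representation forces $\pi_{\mu}$ to be proportional to $(Q*u)^{d+1}$ and hence, using $u=(Q*u)^{d}$, forces $u$ to be proportional to $\pi_{\mu}^{d/(d+1)}$.

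I expect the main obstacle to be the consistency verification in part (i): one must confirm that the boundary-law equation is exactly what makes the leaf-peeling telescoping collapse, keeping careful track at each stage of which edges survive and where the $u$-factors sit, with the $\ell^p$-summability hypotheses on $Q$ and $u$ guaranteeing finiteness of all the infinite sums and partition functions involved. A secondary delicate point in part (ii) is to reconcile the single-vertex DLR identity with the one-sided, past-indexed Markov structure on the tree, so that one genuinely recovers a single spatially homogeneous boundary law rather than vertex-dependent data.
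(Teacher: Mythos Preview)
The paper does not prove this theorem: it is stated with attribution ``See Theorem 3.2 in \cite{Z83}'' and used as a black box, so there is no in-paper proof against which to compare. Your sketch is a faithful adaptation of Zachary's original argument to the present spatially homogeneous gradient setting, and the leaf-peeling consistency check you describe---summing out each new boundary leaf to produce a factor $(Q*u)(\omega_x)$, then collapsing $d$ such factors at an internal vertex via $(Q*u)^d=u$---is exactly the mechanism that drives the proof; the derivation of $u\propto\pi_\mu^{d/(d+1)}$ in part (ii) is likewise the standard route (and is implicitly confirmed by Proposition~\ref{useful} and Remark~\ref{transitionfor} in the paper).
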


We note that the boundary law equation \label{eq: bl} guarantees that \eqref{BoundMC} describes a projective family of finite-volume marginals, whereas the summability condition $u\in \ell^{\frac{d+1}{d}}(S)$ gives us the finiteness of these finite-volume marginals.

From \eqref{BoundMC} and \eqref{eq: boundarylaw}, we can easily determine the single-site marginals and the transition matrices of the spatially homogeneous Gibbs measure that is determined by a boundary law:

\begin{pro} 
\label{useful}
Let $u$ be a spatially homogeneous boundary law for the transfer operator $Q$ and let $\mu$ be the corresponding spatially homogeneous tree-indexed Markov-chain Gibbs measure. Then
\[
\pi_{\mu}(i) = \frac{u(i)^{\frac{d+1}{d}}}{\|u^{\frac{d+1}{d}}\|_1}, \qquad P_{\mu} (i,j) = \frac{u(j) Q(i-j)}{(Q* u)(i)},
\]
for every $i,j\in S$.
\end{pro}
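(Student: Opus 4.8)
The plan is to read both identities off directly from the explicit finite-volume marginals \eqref{BoundMC} provided by Theorem \ref{thm: Zachary}, specializing the volume $\Lambda$ first to a single vertex and then to a single edge, and then simplifying by means of the boundary law equation \eqref{eq: boundarylaw}. Since both $\pi_\mu$ and $P_\mu$ as written in the statement are unchanged if $u$ is replaced by $au$ with $a>0$, Remark \ref{rk: ImplicitNormalization} lets me assume without loss of generality that $u$ is normalized so that $c=1$ in \eqref{eq: boundarylaw}, i.e. $u=(Q*u)^d$, equivalently $Q*u=u^{1/d}$ pointwise.

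\emph{Single-site marginal.} Taking $\Lambda=\{x\}$ in \eqref{BoundMC}, the outer boundary $\partial\Lambda$ is the set of the $d+1$ neighbours $y_1,\dots,y_{d+1}$ of $x$, and the edges of $\Gamma^d$ meeting $\Lambda$ are exactly the $d+1$ edges $\{x,y_k\}$; using the symmetry of $Q$ this gives
\[
\mu\bigl(\sigma_x=i,\ \sigma_{y_k}=j_k\ (k=1,\dots,d+1)\bigr)=\frac{1}{Z_{\{x\}}}\prod_{k=1}^{d+1}u(j_k)\,Q(j_k-i).
\]
Summing over $j_1,\dots,j_{d+1}\in S$ factorizes into $d+1$ identical sums $\sum_{j\in S}u(j)Q(i-j)=(Q*u)(i)$, so $\pi_\mu(i)=Z_{\{x\}}^{-1}(Q*u)(i)^{d+1}=Z_{\{x\}}^{-1}u(i)^{\frac{d+1}{d}}$ by the normalized boundary law equation. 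Since $u\in\ell^{\frac{d+1}{d}}(S)$, the function $u^{\frac{d+1}{d}}$ is summable, and the requirement $\sum_i\pi_\mu(i)=1$ forces $Z_{\{x\}}=\|u^{\frac{d+1}{d}}\|_1$, which is the first claimed formula.

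\emph{Transition matrix.} Now I would take $\Lambda=\{x,y\}$ for an edge $\{x,y\}\in E$; this $\Lambda$ is connected, $\partial\Lambda$ is the disjoint union of the $d$ other neighbours of $x$ and the $d$ other neighbours of $y$, and the edges meeting $\Lambda$ are $\{x,y\}$ together with those $2d$ pendant edges. Plugging this into \eqref{BoundMC}, summing out the $2d$ boundary spins (again a factorization) and using $(Q*u)^d=u$ twice yields $\mu(\sigma_x=i,\sigma_y=j)=Z^{-1}(Q*u)(i)^d\,Q(i-j)\,(Q*u)(j)^d=Z^{-1}u(i)u(j)Q(i-j)$, with $Z=Z_{\{x,y\}}$. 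Then $P_\mu(i,j)=\mu(\sigma_x=i,\sigma_y=j)/\mu(\sigma_x=i)$, and since $\mu(\sigma_x=i)=\sum_{j'\in S}\mu(\sigma_x=i,\sigma_y=j')=Z^{-1}u(i)(Q*u)(i)=Z^{-1}u(i)^{\frac{d+1}{d}}$, the constant $Z$ cancels and we obtain $P_\mu(i,j)=u(i)u(j)Q(i-j)/u(i)^{\frac{d+1}{d}}=u(j)Q(i-j)/(Q*u)(i)$, as desired.

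The proposition is elementary once Theorem \ref{thm: Zachary} is available; the only point requiring attention — rather than any real difficulty — is the combinatorial bookkeeping in \eqref{BoundMC}: for each choice of $\Lambda$, correctly listing which vertices lie in $\partial\Lambda$ (hence carry a $u$-factor) and which edges satisfy $\{v,w\}\cap\Lambda\neq\emptyset$ (hence carry a $Q$-factor), and checking that every sum over $S$ that appears is one of the convergent ones guaranteed by $u\in\ell^{\frac{d+1}{d}}(S)$ and $Q\in\ell^{\frac{d+1}{2}}(S)$. Everything else is algebra with the symmetry of $Q$ and the normalized identity $Q*u=u^{1/d}$.
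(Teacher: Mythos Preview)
Your proof is correct and follows exactly the approach the paper indicates: the paper does not spell out a proof of this proposition, merely remarking that the formulas follow ``easily'' from \eqref{BoundMC} and \eqref{eq: boundarylaw}, and your argument---specializing $\Lambda$ to a single vertex and then to a single edge, summing out the boundary spins, and invoking the normalized identity $Q*u=u^{1/d}$---is precisely the intended computation.
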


\begin{rk}
\label{transitionfor}
{\rm From the identities of Proposition \ref{useful} we deduce the formula
\[
P_{\mu}(i,j) = \frac{\pi_{\mu}(j)^{\frac{d}{d+1}} Q(i-j)}{\bigl( Q * \pi_{\mu}^{\frac{d}{d+1}} \bigr) (i)}
\]
which we used in \eqref{transitionfor0}.
}
\end{rk}

\subsection{Existence of solutions of the boundary law equation}
\label{existence-sec}

Let $d\geq 2$ be a positive integer and $Q\in\ell^{\frac{d+1}{2}}(S)$ be a positive function, which we normalize by assuming that $Q(0)=1$.
In this section, we wish to discuss the existence of positive solutions $u\in \ell^{\frac{d+1}{d}}(S)$ of the normalized boundary law equation
\[
u = (Q * u)^d.
\]
It  is convenient to set $u=x^d$ and rewrite the above equation as
\begin{equation}
\label{bdrylaw}
x=Q* x^d,
\end{equation}
where $x$ is a positive element of $\ell^{d+1}(S)$. We split $Q$ as
\[
Q = \mathbbm{1}_{\{0\}} + q,
\]
and rewrite \eqref{bdrylaw} as
\begin{equation}
\label{bdrylaw2}
x - x^d = q * x^d.
\end{equation}
The reformulation \eqref{bdrylaw2} shows that every positive solution $x$ takes values in the interval $(0,1)$. 

We  fix a finite subset $A\subset S$ and look for solutions $x\in \ell^{d+1}(S)$ of \eqref{bdrylaw2} which are close to $1$ on $A$ and close to 0 on its complement $A^c$. More precisely, we denote by 
\[
\lambda_d := d^{-\frac{1}{d-1}} \in (0,1)
\]
the point at which the function
\[
\varphi_d:[0,+\infty) \rightarrow \R, \qquad \varphi_d(r) = r - r^d,
\]
achieves its maximum and look for solutions $x\in \ell^{d+1}(S)$ of \eqref{bdrylaw2} such that
\begin{equation}
\label{where}
A = \{ \lambda_d < x< 1\}, \qquad A^c = \{ 0< x < \lambda_d \}.  
\end{equation}

We recall that in Section \ref{main1-sec} we defined $\rho=\rho(d,n)$  to be the unique positive number such that
\begin{equation}
\label{defrho}
(d-1)\rho^{d+1} + dn \rho^{d-1} - n = 0,
\end{equation}
and $\eta=\eta(d,n)\in (0,1)$ to be the number
\begin{equation}
\label{defnu}
\eta(d,n) =  \frac{\rho-\rho^d}{(\rho^{d+1} + n)^{\frac{d}{d+1}}}.
\end{equation}
Here is our existence result for solutions of \eqref{bdrylaw} satisfying the conditions \eqref{where}.

\begin{pro}
\label{existence}
Let $Q\in \ell^{\frac{d+1}{2}}(S)$ be a positive function with $Q(0)=1$ and set $q := Q - \mathbbm{1}_{\{0\}}$. Assume that
\begin{equation}
\label{ass}
\|q\|_{\frac{d+1}{2}} \leq \eta(d,n) 
\end{equation}
for some integers $d\geq 2$ and $n\geq 1$.
Then for every subset $A\subset S$ with $|A|=n$ there exists a positive function $\overline{x} \in \ell^{d+1}(S)$ such that
\begin{equation}
\label{aim}
 \overline{x}= Q* \overline{x}^d, \qquad
\|\overline{x}|_{A^c}\|_{\infty} \leq \|\overline{x}|_{A^c}\|_{d+1} < \rho(d,n) < \lambda_d < \overline{x}|_A < 1.
\end{equation}
Moreover:
\begin{enumerate}[(i)]
\item $\|\overline{x}|_{A^c}\|_{d+1} \leq \frac{\rho(d,n)}{\eta(d,n)} \|q\|_{\frac{d+1}{2}}$;
\item $0< 1 - \overline{x}|_A \leq \frac{d^{\frac{d}{d-1}}-d}{d-1} (1+n)^{\frac{d}{d+1}} \|q\|_{\frac{d+1}{2}}$.
\item $\overline{x}(i) \geq (1- \frac{d}{d-1} ( d^{\frac{d}{d-1}}-d) (1+n)^{\frac{d}{d+1}} \epsilon) \sum_{j\in A} Q(i-j)$ for every $i\in A^c$.
\end{enumerate}
\end{pro}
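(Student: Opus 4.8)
The plan is to rewrite the target equation $x=Q*x^{d}$, using the splitting $Q=\mathbbm{1}_{\{0\}}+q$ and the function $\varphi_{d}(r)=r-r^{d}$ of Section~\ref{existence-sec}, as the pointwise system $\varphi_{d}(x(i))=(q*x^{d})(i)$, $i\in S$ (note $q(0)=0$), and to build a positive solution which is close to $1$ on the finite set $A$ and small on $A^{c}$. The two regimes are handled by two different fixed point principles: a Banach contraction for the ``$A^{c}$-block'' with the $A$-values frozen, and Brouwer's theorem for the finite-dimensional ``$A$-block''. The reason the constants match up is that $\rho=\rho(d,n)$ is precisely the point where $g(t):=\varphi_{d}(t)\,(n+t^{d+1})^{-d/(d+1)}$ attains its maximum on $(0,1)$: differentiating $\log g$, the condition $g'(\rho)=0$ is equivalent to the defining relation \eqref{defrho}, while $g$ vanishes at $0$ and $1$, so $\eta(d,n)=g(\rho)=\max g$ by \eqref{defnu}; hypothesis \eqref{ass} then says exactly that $\|q\|_{\frac{d+1}{2}}$ does not exceed this maximum.

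For the $A^{c}$-block, I would fix $x_{A}$ in the compact convex cube $K:=[\lambda_{d},1]^{A}$ and consider, on the closed ball $B:=\{\,y\in\ell^{d+1}(A^{c}):y\ge0,\ \|y\|_{d+1}\le\rho\,\}$ (complete for the $\ell^{d+1}$-metric), the map $\Psi_{x_{A}}(y):=\bigl(Q*(x_{A}\oplus y)^{d}\bigr)|_{A^{c}}$, where $x_{A}\oplus y\in\ell^{d+1}(S)$ denotes the concatenation; a fixed point of $\Psi_{x_{A}}$ is a solution of the $A^{c}$-block for the prescribed $A$-values. Writing $\Psi_{x_{A}}(y)=y^{d}+\bigl(q*(x_{A}\oplus y)^{d}\bigr)|_{A^{c}}$, the Young inequality $\|q*v\|_{d+1}\le\|q\|_{\frac{d+1}{2}}\|v\|_{\frac{d+1}{d}}$, the identity $\|w^{d}\|_{\frac{d+1}{d}}=\|w\|_{d+1}^{d}$, the bound $\|x_{A}\oplus y\|_{d+1}^{d+1}\le n+\rho^{d+1}$ (since $x_{A}\le1$) and \eqref{ass}, \eqref{defnu} give
\[
\|\Psi_{x_{A}}(y)\|_{d+1}\le\rho^{d}+\eta(d,n)\,(n+\rho^{d+1})^{d/(d+1)}=\rho^{d}+(\rho-\rho^{d})=\rho ,
\]
so $\Psi_{x_{A}}$ preserves $B$. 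Since $(x_{A}\oplus y)^{d}-(x_{A}\oplus y')^{d}$ is supported on $A^{c}$ and equals $y^{d}-y'^{d}$ there, the mean value inequality on the convex set $B$ (using $\|y\|_{\infty}\le\|y\|_{d+1}\le\rho$) yields $\|y^{d}-y'^{d}\|_{\frac{d+1}{d}}\le d\rho^{d-1}\|y-y'\|_{d+1}$, and hence
\[
\|\Psi_{x_{A}}(y)-\Psi_{x_{A}}(y')\|_{d+1}\le\|Q\|_{\frac{d+1}{2}}\,d\,\rho^{d-1}\,\|y-y'\|_{d+1}.
\]
One must then check $\|Q\|_{\frac{d+1}{2}}\,d\rho^{d-1}<1$, which follows from the rewriting $d\rho^{d-1}=1-\tfrac{d-1}{n}\rho^{d+1}$ of \eqref{defrho}, from $\|Q\|_{\frac{d+1}{2}}=\bigl(1+\|q\|_{\frac{d+1}{2}}^{(d+1)/2}\bigr)^{2/(d+1)}$, and from the explicit estimates \eqref{bb0} on $\eta$. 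Banach's theorem then produces a unique fixed point $y(x_{A})\in B$; since the contraction constant is uniform in $x_{A}$ and $(x_{A},y)\mapsto\Psi_{x_{A}}(y)$ is continuous, $x_{A}\mapsto y(x_{A})$ is continuous on $K$.

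For the $A$-block, let $\varphi_{d}^{+}\colon[0,\varphi_{d}(\lambda_{d})]\to[\lambda_{d},1]$ be the inverse of the strictly decreasing restriction $\varphi_{d}|_{[\lambda_{d},1]}$, so $\varphi_{d}^{+}(0)=1$, and define $G\colon K\to\R^{A}$ by
\[
G(x_{A})(i):=\varphi_{d}^{+}\Bigl(\textstyle\sum_{j\in A\setminus\{i\}}q(i-j)\,x_{A}(j)^{d}+\sum_{j\in A^{c}}q(i-j)\,y(x_{A})(j)^{d}\Bigr),\qquad i\in A .
\]
Hölder's inequality together with $x_{A}\le1$, $\|y(x_{A})\|_{d+1}\le\rho$ and \eqref{ass} bound the argument of $\varphi_{d}^{+}$ by $\|q\|_{\frac{d+1}{2}}\bigl((n-1)^{(d-1)/(d+1)}+\rho^{d}\bigr)$, which \eqref{bb0} shows is $<\varphi_{d}(\lambda_{d})$; hence $G$ is well defined, $G(x_{A})(i)>\lambda_{d}$, and $G(x_{A})(i)<1$ because that argument is strictly positive ($q>0$ off $0$). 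So $G(K)\subseteq K$, $G$ is continuous, and Brouwer's theorem gives $\overline x_{A}\in K$ with $G(\overline x_{A})=\overline x_{A}$. Put $\overline x:=\overline x_{A}\oplus y(\overline x_{A})\in\ell^{d+1}(S)$. On $A^{c}$, $\overline x=\Psi_{\overline x_{A}}(y(\overline x_{A}))=(Q*\overline x^{d})|_{A^{c}}$; on $A$, $\varphi_{d}(\overline x(i))$ equals the argument of $\varphi_{d}^{+}$, namely $(q*\overline x^{d})(i)$, so $\overline x(i)=\overline x(i)^{d}+(q*\overline x^{d})(i)=(Q*\overline x^{d})(i)$. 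Hence $\overline x=Q*\overline x^{d}$; in particular $\overline x>0$ (on $A$ it is $\ge\lambda_{d}$, on $A^{c}$ it equals $Q*\overline x^{d}>0$), and the chain of strict inequalities in \eqref{aim} follows from $\|\cdot\|_{\infty}\le\|\cdot\|_{d+1}$, from $\rho<\lambda_{d}$, and from the strict forms of the estimates just used (for instance $\|\overline x|_{A^{c}}\|_{d+1}=\rho$ would force $\overline x|_{A}\equiv1$, contradicting $\overline x|_{A}<1$).

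Finally, for (i)--(iii) I would substitute $\overline x$ back into the equation. On $A^{c}$, $\overline x(i)\bigl(1-\overline x(i)^{d-1}\bigr)=(q*\overline x^{d})(i)$ with $\overline x(i)\le\rho$ gives $\overline x(i)\le(1-\rho^{d-1})^{-1}(q*\overline x^{d})(i)$; taking $\ell^{d+1}$-norms, applying Young, and using $(n+\rho^{d+1})^{d/(d+1)}=\varphi_{d}(\rho)/\eta(d,n)=\rho(1-\rho^{d-1})/\eta(d,n)$ yields (i). On $A$, $\varphi_{d}(\overline x(i))=(q*\overline x^{d})(i)\le\|q\|_{\frac{d+1}{2}}\|\overline x\|_{d+1}^{d}\le\|q\|_{\frac{d+1}{2}}(1+n)^{d/(d+1)}$, and by concavity of $\varphi_{d}$ its graph over $[\lambda_{d},1]$ lies above the chord joining $(\lambda_{d},\varphi_{d}(\lambda_{d}))$ to $(1,0)$, i.e.\ $\varphi_{d}(r)\ge\tfrac{\varphi_{d}(\lambda_{d})}{1-\lambda_{d}}(1-r)$ with $\tfrac{1-\lambda_{d}}{\varphi_{d}(\lambda_{d})}=\tfrac{d^{d/(d-1)}-d}{d-1}$, which gives (ii); and for $i\in A^{c}$, $\overline x(i)=(Q*\overline x^{d})(i)\ge(\min_{A}\overline x)^{d}\sum_{j\in A}Q(i-j)$, so Bernoulli's inequality applied to (ii) gives (iii). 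The main obstacle is not the fixed point architecture but the bundle of essentially tight inequalities among $\rho(d,n)$, $\eta(d,n)$ and $\lambda_{d}$ on which the contraction and Brouwer steps rest — the ``$B$ into $B$'' estimate (tight by the very definition of $\eta$ as $\max g$), the contraction estimate $\|Q\|_{\frac{d+1}{2}}d\rho^{d-1}<1$, and the ``$G$ preserves $K$'' estimate — all of which need the precise characterization of $\rho$ as the root of \eqref{defrho} and the two-sided bounds \eqref{bb0}; establishing these is exactly the role of Lemma~\ref{asymptotics} and the lemmas of Section~\ref{lemmas-sec}.
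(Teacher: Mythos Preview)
Your architecture is exactly the paper's: a contraction on the $A^c$-block with the $A$-values frozen, followed by Brouwer's theorem on the finite-dimensional cube $[\lambda_d,1]^A$. Two points deserve comment.

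First, the contraction estimate. Your condition $\|Q\|_{\frac{d+1}{2}}\,d\rho^{d-1}<1$ is, after the algebra you indicate, precisely $\|q\|_{\frac{d+1}{2}}<g_d(\rho)$ in the notation of \eqref{defg}. Since $\|q\|_{\frac{d+1}{2}}\le\eta=f_{d,n}(\rho)$, what you actually need is the inequality $f_{d,n}(\rho)<g_d(\rho)$, and this is \emph{not} a consequence of the two-sided bounds \eqref{bb0} on $\eta$ from Lemma~\ref{asymptotics}: it is the statement $\rho<r_*$ of Lemma~\ref{confronto}, whose proof is the non-trivial computation of Section~\ref{lemmas-sec}. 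Your closing paragraph correctly points to ``the lemmas of Section~\ref{lemmas-sec}'', but the earlier attribution to \eqref{bb0} alone is not enough.

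Second, a couple of places where the paper's choices are cleaner than yours. The paper works not with the fixed radius $\rho$ but with the adapted radius $r_q\in[0,\rho]$ solving $f_{d,n}(r_q)=\|q\|_{\frac{d+1}{2}}$; with this choice, the Brouwer step becomes immediate because the $\ell^\infty$-bound on $q*\overline{x}^d$ collapses to $\varphi_d(r_q)<\varphi_d(\lambda_d)=\mu_d$ (see \eqref{mir}), with no appeal to \eqref{bb0} needed. Here the paper uses the H\"older pairing $\|q*v\|_\infty\le\|q\|_{d+1}\|v\|_{\frac{d+1}{d}}$ rather than your $\|q\|_{\frac{d+1}{2}}$-pairing, which is what makes the constant come out as $\varphi_d(r_q)$ on the nose. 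Your split bound $\|q\|_{\frac{d+1}{2}}\bigl((n-1)^{\frac{d-1}{d+1}}+\rho^d\bigr)<\mu_d$ is plausible but you have not verified it, and again \eqref{bb0} is not quite the right tool. Conversely, your derivation of (i) by writing $\overline{x}(1-\overline{x}^{\,d-1})=(q*\overline{x}^d)$ on $A^c$ and dividing by $1-\rho^{d-1}$ is a tidy alternative to the paper's argument via the convexity of $\bigl(f_{d,n}|_{[0,\rho]}\bigr)^{-1}$.
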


\begin{proof}
Given functions $x_0:A^c \rightarrow \R$ and $x_1: A \rightarrow \R$, we denote by
\[
x_0 \sqcup x_1 : S \rightarrow \R
\]
the function mapping $i\in A^c$ to $x_0(i)$ and $i\in A$ to $x_1(i)$. We start by fixing an arbitrary $x_1\in [\lambda_d,1]^A$ and look for functions $x$ of the form $x=x_0\sqcup x_1$ which solve \eqref{bdrylaw} on $A^c$, i.e.\
\begin{equation}
\label{bdrylawA^c}
x_0 = \bigl( Q* (x_0^d\sqcup x_1^d) \bigr)\bigr|_{A^c}.
\end{equation}
Equivalently, we are looking for the fixed points of the map
\[
F_{x_1} : \ell^{d+1}(A^c) \rightarrow \ell^{d+1}(A^c), \qquad F_{x_1}(x_0) = \bigl( Q * (x_0^d \sqcup x_1^d)\bigr)\bigr|_{A^c},
\]
which is well defined because of the Young inequality
\[
\|Q * (x_0^d \sqcup x_1^d)\|_{d+1} \leq \|Q\|_{\frac{d+1}{2}} \|x_0^d \sqcup x_1^d\|_{\frac{d+1}{d}} =  \|Q\|_{\frac{d+1}{2}} \|x_0 \sqcup x_1\|_{d+1}^d.
\]
Given $r>0$, set 
\[
X_r := \{ x_0\in \ell^{d+1}(A^c) \mid x_0\geq 0, \; \|x_0\|_{d+1} \leq r \}.
\]
We now check which condition on $r$ guarantees that $F_{x_1}$ maps $X_r$ to itself. If $x_0$ is in $X_r$ then $F_{x_1}(x_0)\geq 0$ and using again the Young inequality we find
\[
\begin{split}
\|F_{x_1}&(x_0) \|_{d+1} = \| x_0^d + q*(x_0^d \sqcup x_1^d)\|_{d+1} \leq \|x_0^d\|_{d+1} + \|q*(x_0^d \sqcup x_1^d)\|_{d+1}\\ &\leq \|x_0\|_{d(d+1)}^d + \|q\|_{\frac{d+1}{2}} \|x_0^d \sqcup x_1^d\|_{\frac{d+1}{d}} \leq \|x_0\|_{d+1}^d + \|q\|_{\frac{d+1}{2}} \|x_0\sqcup x_1\|_{d+1}^d \\ &=  \|x_0\|_{d+1}^d + \|q\|_{\frac{d+1}{2}} \bigl( \|x_0\|_{d+1}^{d+1} +  \|x_1\|_{d+1}^{d+1} \bigr)^{\frac{d}{d+1}} \\ &\leq r^d + \|q\|_{\frac{d+1}{2}}  (r^{d+1} + |A|)^{\frac{d}{d+1}} = r^d + \|q\|_{\frac{d+1}{2}}  (r^{d+1} + n)^{\frac{d}{d+1}} ,
\end{split}
\]
where we have also used the inequality $|x_1|\leq 1$ and the following consequence of the monotonicity of the $\ell^p$ norms: $\|x_0\|_{d(d+1)} \leq \|x_0\|_{d+1}$. Therefore, $F_{x_1}$ maps $X_r$ to itself provided that
\[
r^d + \|q\|_{\frac{d+1}{2}}  (r^{d+1} + n)^{\frac{d}{d+1}}  \leq r.
\]
This condition can be equivalently rewritten as
\begin{equation}
\label{selfmap}
\|q\|_{\frac{d+1}{2}} \leq f_{d,n}(r), 
\end{equation}
where $f_{d,n}:[0,+\infty) \rightarrow \R$ is the function  
\begin{equation}
\label{deff}
f_{d,n}(r) := \frac{r-r^d}{(r^{d+1} + n)^{\frac{d}{d+1}}}.
\end{equation}
Next note that
\begin{equation}
\label{dec}
F_{x_1}(x_0) = F_0(x_0) + (q*(0_{A^c} \sqcup x_1^d))|_{A^c},
\end{equation}
where $0_{A^c}$ denote the zero function on $A^c$. The map $F_0$ is the composition of the maps
\[
\ell^{d+1}(A^c) \rightarrow \ell^{\frac{d+1}{d}}(A^c), \qquad x_0 \mapsto x_0^d,
\]
and
\[
 \ell^{\frac{d+1}{d}} (A^c) \rightarrow \ell^{d+1}(A^c), \qquad y_0 \mapsto (Q*(y_0\sqcup 0_A)|_{A^c}.
 \]
By the mean value theorem, the first map has Lipschitz constant $dr^{d-1}$ on the $r$-ball of $\ell^{d+1}(A^c)$. The second map is linear with operator norm not exceeding
\[
\|Q\|_{\frac{d+1}{2}} = \left( 1 +  \|q\|_{\frac{d+1}{2}}^{\frac{d+1}{2}}\right)^{\frac{2}{d+1}}.
\]
Therefore, the restriction of the map $F_{x_1}$ to $X_r$ is a contraction if $r$ satisfies \eqref{selfmap} and
\[
d  \left( 1 +  \|q\|_{\frac{d+1}{2}}^{\frac{d+1}{2}}\right)^{\frac{2}{d+1}} r^{d-1}<1.
\]
This condition forces $r$ to belong to the interval $(0,\lambda_d)$ and can be equivalently rewritten as
\begin{equation}
\label{contraction}
\|q\|_{\frac{d+1}{2}} < g_d(r) ,
\end{equation}
where $g_d: (0,\lambda_d] \rightarrow \R$ is the function
\begin{equation}
\label{defg}
g_d(r) := \left( \Bigl( \frac{\lambda_d}{r} \Bigr)^{\frac{d^2-1}{2}} - 1\right)^{\frac{2}{d+1}}.
\end{equation}
The next lemma describes some useful properties of the functions $f_{d,n}$ and $g_d$. See also Figure \ref{Fig: fdnAndgd} for an illustration.
\begin{figure}[h]
\subfloat[$r \in (0,1)$]
{\includegraphics[width=6cm]{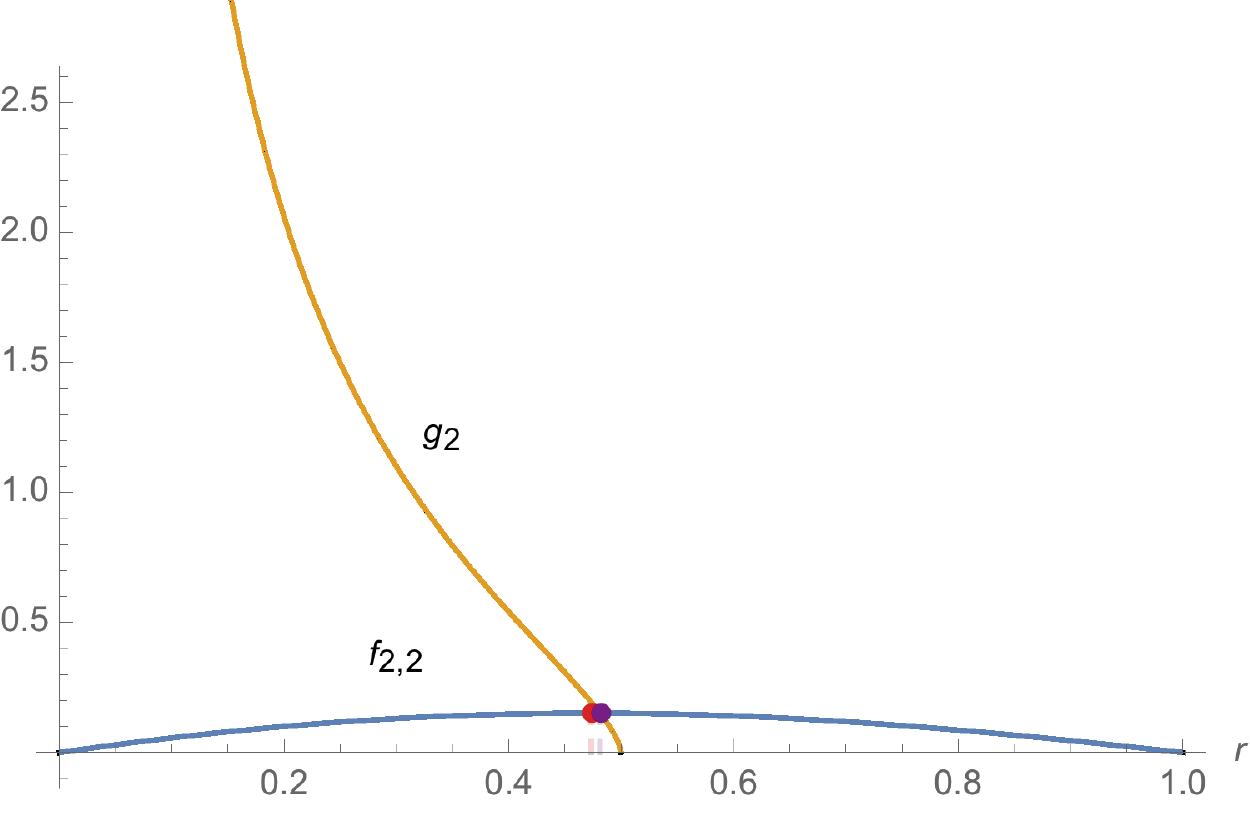}
} 
\subfloat[Zoomed in]	
{\includegraphics[width=6cm]{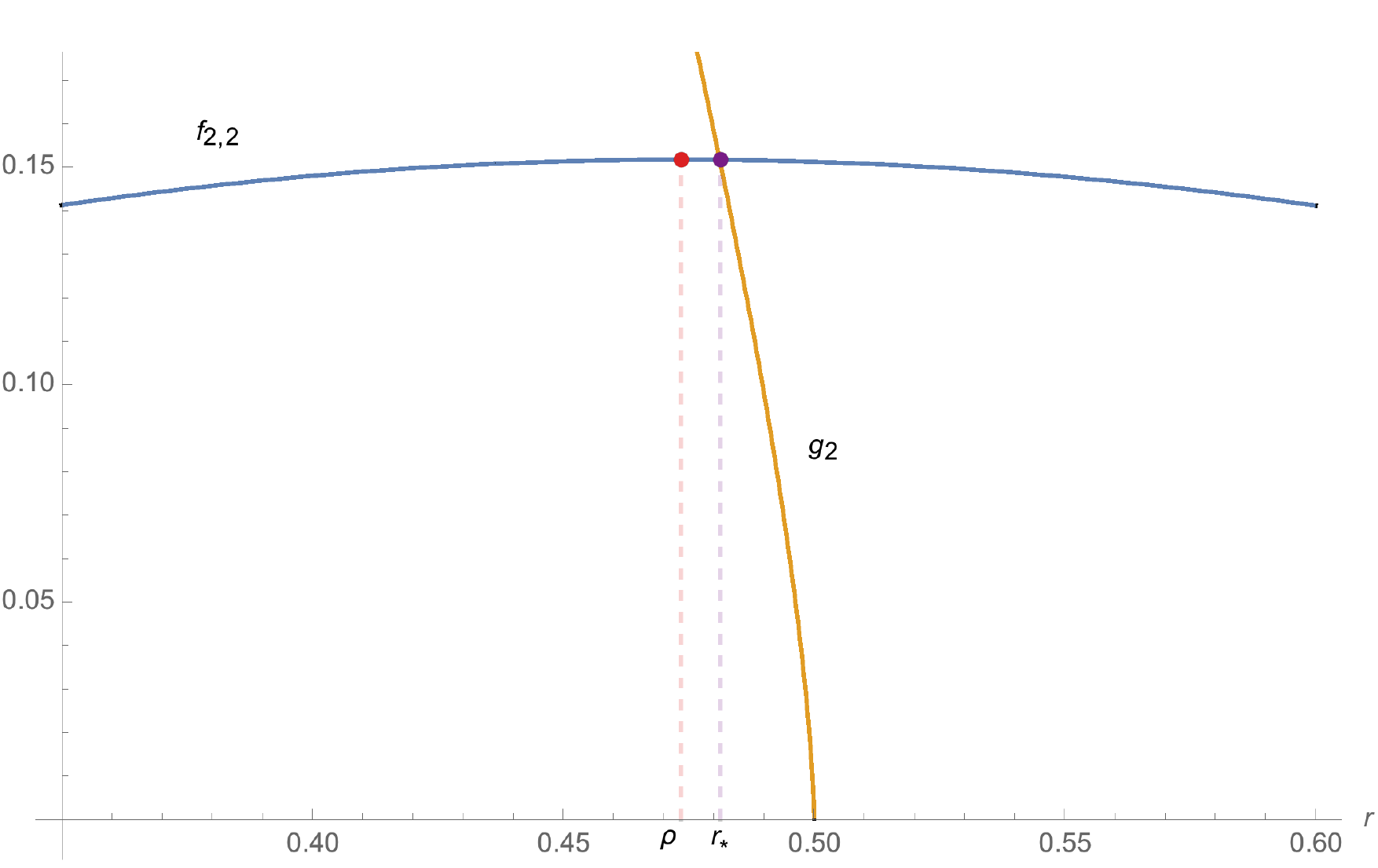}
}
\caption{Plot (a) shows the graphs of the functions $f_{d,n}$ and $g_d$ in the case $d=n=2$. Here, $\lambda_2=0.5$. The red point is the unique maximum of $f_{2,2}$, obtained at $\rho \approx 0.473$. The violet point marks the unique point of intersection of the two graphs, which happens at $r_* \approx 0.481$. Hence, $\eta(2,2)=f_{2,2}(\rho) \approx  0.152$. Plot (b) is a zoomed-in version of plot (a), where we see that the number $\rho$ where $f_{2,2}$ achieves its maximum is smaller than the number $r_*$ at which $f_{2,2}$ and $g_2$ coincide.}
\label{Fig: fdnAndgd}
\end{figure}

\begin{lemma}
\label{confronto}
Let $\rho=\rho(d,n)$ be the unique positive solution of the equation (\ref{defrho}). Then $\rho$ belongs to the interval $(0,\lambda_d)$. The function $f_{d,n}$ is strictly increasing on the interval $[0,\rho]$, strictly decreasing on $[\rho,+\infty)$ and strictly concave on $[0,1]$. Its maximum is the number
\[
\eta(d,n) = f_{d,n}(\rho(d,n))
\]
which is introduced in \eqref{defnu}. The function $g_d$ is strictly decreasing on $(0,\lambda_d]$  and there exists a number $r_* \in (\rho,\lambda_d)$ such that
\[
f_{d,n} < g_d \quad \mbox{on } (0,r_*), \qquad g_d < f_{d,n} \quad \mbox{on } (r_*,\lambda_d].
\]
\end{lemma}

Postponing the proof of this lemma to the next section, we proceed with the proof of Proposition \ref{existence}. By the above lemma and our assumption \eqref{ass}, we can find a number $r_q\in [0,\rho]$ such that
\[
f_{d,n}(r_q) = \|q\|_{\frac{d+1}{2}}.
\]
Then the equality holds in \eqref{selfmap} with $r=r_q$ and hence $F_{x_1}$ maps $X_{r_q}$ to itself. Since $r_q$ belongs to $[0,\rho]$, the above lemma implies that
\[
\|q\|_{\frac{d+1}{2}} = f_{d,n}(r_q) < g_d(r_q),
\]
so $r=r_q$ satisfies \eqref{contraction}. We conclude that $F_{x_1}$ is a contraction on $X_{r_q}$ and hence has a unique fixed point $\xi_0(x_1)$ in $X_{r_q}$. In particular, we have
\[
\|\xi_0(x_1)\|_{\infty} \leq \|\xi_0(x_1)\|_{d+1} \leq r_q < \rho(d,n),
\]
so $x_0=\xi_0(x_1)$ takes values in $[0,\rho(d,n))$ and is a solution of \eqref{bdrylawA^c}.

Note that by \eqref{dec}
\[
\xi_0(x_1) = (\mathrm{id} - F_0)^{-1}\bigl( (q*(0_{A^c} \sqcup x_1^d))|_{A^c} \bigr),
\]
where the map $\mathrm{id} - F_0$ is a homeomorphism from $X_{r_q}$ to its image thanks to the fact that $F_0$ has Lipschitz constant less than 1 on $X_{r_q}$. From the above identity we deduce that the map 
\[
\xi_0: [\lambda_d,1]^A \rightarrow \ell^{d+1}(A^c)
\]
is continuous.

Let $x_1\in [\lambda_d,1]^A$. The function $x=\xi_0(x_1)\sqcup x_1$ is a solution of \eqref{bdrylaw} if and only if $x_1$ satisfies the equation
\begin{equation}
\label{bdrylawA}
x_1 = \bigl( Q*(\xi_0(x_1)^d \sqcup x_1^d)\bigr)\bigr|_A,
\end{equation}
which can be rewritten as
\begin{equation}
\label{bdrylawA2}
\varphi_d(x_1) = \bigl( q*(\xi_0(x_1)^d \sqcup x_1^d)\bigr)\bigr|_A.
\end{equation}
We set
\begin{equation}
\label{mud}
\mu_d:= \max_{r\in [0,+\infty)} \varphi_d(r) = \varphi_d(\lambda_d) = \lambda_d {\textstyle \left( 1- \frac{1}{d} \right) },
\end{equation}
and claim that 
\begin{equation}
\label{claim} 
0 \leq q*(\xi_0(x_1)^d \sqcup x_1^d) < \mu_d \qquad \mbox{on } A,
\end{equation}
for every $x_1\in  [\lambda_d,1]^A$. The first inequality is clear. In order to prove the second one, we use the upper bound
\[
\begin{split}
\|q* ( \xi_0(x_1)^d &\sqcup x_1^d)\|_{\infty} \leq \|q\|_{d+1} \| \xi_0(x_1)^d \sqcup x_1^d\|_{\frac{d+1}{d}} = \|q\|_{d+1}  \| \xi_0(x_1) \sqcup x_1 \|_{d+1}^d \\ &= \|q\|_{d+1}  \bigl( \|\xi_0(x_1)\|_{d+1}^{d+1} + \|x_1\|_{d+1}^{d+1} \bigr)^{\frac{d}{d+1}} \leq  \|q\|_{d+1} ( r_q^{d+1} + |A| )^{\frac{d}{d+1}} \\ &= \|q\|_{d+1} ( r_q^{d+1} + n )^{\frac{d}{d+1}}.
\end{split}
\]
By the bound $\|q\|_{d+1}\leq \|q\|_{\frac{d+1}{2}}$ and our choice of $r_q$ we have
\begin{equation}
\label{mir}
\begin{split}
\|q* ( \xi_0(x_1)^d \sqcup x_1^d)\|_{\infty} &\leq \|q\|_{\frac{d+1}{2}} ( r_q^{d+1} + n )^{\frac{d}{d+1}} = f_{d,n}(r_q) ( r_q^{d+1} + n)^{\frac{d}{d+1}} \\ &= \varphi_d(r_q) < \varphi_d(\lambda_d) = \mu_d,
\end{split}
\end{equation}
as claimed in \eqref{claim}. Thanks to \eqref{claim}, we can rewrite \eqref{bdrylawA2} as
\[
x_1 = \psi\bigl( \bigl( q*(\xi_0(x_1)^d \sqcup x_1^d)\bigr)\bigr|_A \bigr),
\]
where $\psi: [0,\mu_d] \rightarrow [\lambda_d,1]$ is the inverse of the restriction of $\varphi_d$ to the interval $[\lambda_d,1]$, on which $\varphi_d$ is strictly monotonically decreasing. Therefore, $x_1\in [\lambda_d,1]^A$ satisfies \eqref{bdrylawA} if and only if it is a fixed point of the continuous map
\begin{equation}
\label{eq: MapG}
G: [\lambda_d,1]^A \rightarrow  [\lambda_d,1]^A, \qquad G(x_1) := \psi\bigl( \bigl( q*(\xi_0(x_1)^d \sqcup x_1^d)\bigr)\bigr|_A \bigr).
\end{equation}
By Brouwer's fixed point theorem, $G$ has a fixed point $\overline{x}_1$ on the $n$-dimensional cube $[\lambda_d,1]^A$. 
Setting $\overline{x}_0:= \xi_0(\overline{x}_1)$, we obtain that $\overline{x}:= \overline{x}_0\sqcup \overline{x}_1$ is a positive solution of \eqref{bdrylaw} and as such takes values in $(0,1)$.

By the strict upper bound in \eqref{claim} and by the properties of $\psi$, we have $\overline{x}_1 > \lambda_d$. Since $\overline{x}_0$ belongs to $X_{r_q}$ with $r_q< \rho(d,n)$, we have
\[
\|\overline{x}_0\|_{\infty} \leq \|\overline{x}_0\|_{d+1} < \rho(d,n).
\]
We conclude that $\overline{x}\in \ell^{d+1}(S)$ is a positive solution of \eqref{aim}. 

There remains to prove the bounds (i), (ii) and (iii). By construction,
\begin{equation}
\label{sharp}
\|\overline{x}|_{A^c}\|_{d+1} = \|\overline{x}_0\|_{d+1} \leq r_q.
\end{equation}
Since $f_{d,n}$ is strictly increasing and concave on $[0,\rho]$, its restriction to $[0,\rho]$ has an inverse which is strictly increasing and convex on $[0,\eta(d,n)]$. The convexity of this inverse implies the inequality
\[
r_q = \bigl( f_{d,n}|_{[0,\rho]} \bigr)^{-1}\bigl(\|q\|_{\frac{d+1}{2}}\bigr) \leq \frac{\rho(d,n)}{\eta(d,n)} \|q\|_{\frac{d+1}{2}},
\]
so (i) follows from \eqref{sharp}. 

Since the restriction of $\varphi_d$ to $[\lambda_d,1]$ is strictly decreasing and concave, its inverse $\psi_d$ is strictly decreasing and concave on $[0,\mu_d]$. The concavity of $\psi_d$ implies
\begin{equation}
\label{conc}
\psi_d(s) \geq 1 - \frac{1-\lambda_d}{\mu_d} s = 1- \frac{d^{\frac{d}{d-1}}-d}{d-1} s \qquad \forall s\in [0,\mu_d].
\end{equation}
By the first inequality in \eqref{mir}, we have
\[
\|q* \overline{x}^d \|_{\infty} \leq  \|q\|_{\frac{d+1}{2}} ( r_q^{d+1} + n )^{\frac{d}{d+1}} \leq  ( 1 + n )^{\frac{d}{d+1}} \|q\|_{\frac{d+1}{2}} .
\]
Together with the fact that $\psi_d$ is decreasing and satisfies the concavity inequality \eqref{conc}, the above upper bound implies
\[
\overline{x}|_A = \overline{x}_1 = \psi_d \bigl( (q* \overline{x}^d)|_A \bigr) \geq 1 - {\textstyle \frac{d^{\frac{d}{d-1}}-d}{d-1}} ( 1 + n )^{\frac{d}{d+1}} \|q\|_{\frac{d+1}{2}} ,
\]
proving (ii). 

The map $F_{\overline{x}_1}$ is monotonically increasing on the subset of non-negative functions in $\ell^{d+1}(A^c)$, with respect to the standard partial order of functions. Since the fixed point $\overline{x}_0$ of $F_{\overline{x}_1}$ satisfies $\overline{x}_0\geq 0$, we have
\[
\overline{x}|_{A^c} = \overline{x}_0 = F_{\overline{x}_1}(\overline{x}_0) \geq  F_{\overline{x}_1}(0) = \bigl( Q * (0_{A^c} \sqcup \overline{x}_1^d) \bigr)|_{A^c}.
\]
By evaluating at $i\in A^c$ and using (ii), we obtain the lower bound
\[
\begin{split}
\overline{x} (i) &\geq \bigl( Q * (0_{A^c} \sqcup \overline{x}_1^d) \bigr) (i) = \sum_{j\in A} Q(i-j) \overline{x}^d_1(j) \\ &\geq \left( 1 - {\textstyle \frac{d^{\frac{d}{d-1}}-d}{d-1}} ( 1 + n )^{\frac{d}{d+1}} \|q\|_{\frac{d+1}{2}} \right)^d \sum_{j\in A} Q(i-j) \\ &\geq \left( 1 - {\textstyle \frac{d}{d-1}} (d^{\frac{d}{d-1}}-d) ( 1 + n )^{\frac{d}{d+1}} \|q\|_{\frac{d+1}{2}} \right) \sum_{j\in A} Q(i-j), 
\end{split}
\]
where in the last step we have used the Bernoulli inequality. This proves (iii) and concludes the proof of Proposition \ref{existence}.
\end{proof}

\begin{rk}
\label{uniqueness1} 
{\rm ({\em Uniqueness for $|A|=1$}) Consider the standard partial order on the space of real valued functions. 
It is easy to show that the map $G:[\lambda_d,1]^A \rightarrow [\lambda_d,1]^A$ is monotonically decreasing. Indeed, the fact that the map $(x_0,x_1) \mapsto F_{x_1}(x_0)$ is monotonically increasing implies that if $x_1\leq x_1'$ then $F_{x_1}^n(x_0) \leq F_{x_1'}^n(x_0)$ for every $n\in \N$ and every $x_0$. Taking the limit in $n$, we deduce that the map $\xi_0$ which associates to every $x_1\in [\lambda_d,1]^A$ the unique fixed point of $F_{x_1}$ is also monotonically increasing, and so is the map $x_1 \mapsto (q*( \xi_0(x_1) \sqcup x_1^d))|_A \in [0,\mu_d]^A$. From the fact that the function $\psi_d$ is montonically decreasing on $[0,\mu_d]$, we deduce that $G$ is monotonically decreasing, as claimed.
When $|A|=1$, this implies that $G$ has a unique fixed point . In this case, the solution $\overline{x}$ of \eqref{aim} is unique.
} 
\end{rk}

\begin{rk}
\label{uniqueness1b}
{\rm ({\em Uniqueness for $|A|>1$})
If $n=|A|>1$, we do not know whether the solution of \eqref{aim} is unique under the assumption $\|q\|_{\frac{d+1}{2}} \leq \eta(d,n)$. By assuming a stronger smallness assumption on $\|q\|_{\frac{d+1}{2}}$, we surely have existence and uniqueness of a solution $x\in \ell^{d+1}(S)$ of the equation $x=Q*x^d$ which is sufficiently close to $\mathbbm{1}_A$ in the $(d+1)$-norm. This follows from the implicit mapping theorem applied to the continuously differentiable map
\[
H: \ell^{\frac{d+1}{2}}(S) \times \ell^{d+1}(S) \rightarrow \ell^{d+1}(S), \qquad H(Q,x) = x - Q* x^d.
\]
Indeed, $H(\mathbbm{1}_{\{0\}}, \mathbbm{1}_A) = 0$ and the differential of $H$ with respect to the second variable at $(\mathbbm{1}_{\{0\}}, \mathbbm{1}_A)$ is the linear operator
\[
d_2 H (\mathbbm{1}_{\{0\}}, \mathbbm{1}_A)[y] = \bigl( 1 - d \mathbbm{1}_A \bigr) y,
\]
which is an isomorphism on $\ell^{d+1}(S)$ because $d\neq 1$. From Theorem \ref{thm: Zachary} and the first identity in Proposition \ref{useful}, we then deduce that there exists positive numbers $\eta'(d,n)$ and $\delta(d,n)$ such that if $\|q\|_{\frac{d+1}{2}} \leq \eta'(d,n)$ and $A\subset S$ has $n$ elements, then the Markovian gradient specification which is induced by $Q$ on the regular $d$-tree with local state space $S$ has a unique spatially homogeneous Markov-chain Gibbs measure $\mu$ whose single-site marginal probability distribution $\pi_{\mu}$ satisfies
\[
\bigl\|\pi_{\mu} - {\textstyle \frac{1}{|A|}} \mathbbm{1}_A \bigr\|_1 < \delta(d,|A|),
\]
as claimed in Remark \ref{uniqueness0}. The bounds $\eta'$ and $\delta$ which one gets from standard quantitative versions of the implicit function theorem are much worse than the ones appearing in Theorem \ref{main1-thm}. In order to obtain better bounds, one can look for a solution $x\in \ell^{d+1}(S)$ of the equation $x=Q*x^d$ which is close to $\mathbbm{1}_A$ by considering the
fixed point of the map
\[
(x_0,x_1) \mapsto \Bigl( \bigl(Q* (x_0^d \sqcup x_1^d)\bigr)|_{A^c}, \psi_d \bigl( \bigl(q* (x_0^d \sqcup x_1^d)\bigr)|_{A} \bigr) \Bigr),
\]
which can be shown to be a contraction on a suitable closed subset of $\ell^{d+1}(A^c) \times \ell^{\infty}(A)$ if $\|q\|_{\frac{d+1}{2}}$ is small enough. In this way, one gets an existence and uniqueness statement as above but with bounds which are not too much worse than those in Theorem \ref{main1-thm}.}
\end{rk}

\subsection{Proof of Lemma  \ref{confronto} and of the bounds on \texorpdfstring{$\eta$}{eta}}
\label{lemmas-sec}

For the sake of simplicity,  we omit subindices and use the abbreviations $f=f_{d,n}$, $g=g_d$, $\varphi=\varphi_d$, $\lambda=\lambda_ds$ throughout this section.

\begin{proof}[Proof of Lemma \ref{confronto}]
 The identity
\[
f'(r) = \frac{(1-d)r^{d+1}-dnr^{d-1}+n}{(r^{d+1}+n)^{\frac{d}{d+1}+1}}
\]
shows that $f$ is strictly increasing on the interval $[0,\rho]$ and strictly decreasing on the interval $[\rho,+\infty)$, where $\rho=\rho(d,n)$ is the unique positive solution of \eqref{defrho}. Since
\[
f'(\lambda) = \frac{(1-d)\lambda^2}{d(r^{d+1}+n)^{\frac{d}{d+1}+1}}
\]
is negative, the number $\rho$ at which $f$ achieves its global maximum belongs to the interval $(0,\lambda)$.

From the identity
\[
f''(r) = \frac{(d-1)r^{2d+1}+3ndr^{2d-1} - n(d^2+d+1)r^d-n^2 d(d-1)r^{d-1}}{(r^{d+1}+n)^{\frac{d}{d+1}+2}},
\]
and the Decartes rule of signs, we deduce that $f''$ changes sign exactly once on $(0,+\infty)$, so the fact that the number
\[
f''(1) = - \frac{(d-1)(n^2 d + (d-1)n-1)}{(r^{d+1}+1)^{\frac{d}{d+1}+2}} 
\]
is negative implies that $f$ is strictly concave on $[0,1]$.

The function $g$ is positive on $(0,\lambda)$, where it strictly decreases from $+\infty$ to $g(\lambda)=0$. Since $f(0)=0$ and $f(\lambda)>0$, there exist solutions $r_*\in (0,\lambda)$ of the equation $f(r_*)=g(r_*)$. Since $g$ is strictly convex on $(0,\lambda]$ and $f$ is strictly concave on this interval, the solution $r_*$ is unique and we have
\begin{equation}
\label{dico}
g>f \quad \mbox{on } (0,r_*), \qquad g<f \quad \mbox{on } (r_*,\lambda].
\end{equation}
There remains to prove that $r_*$ is larger than $\rho$. Thanks to \eqref{dico}, it sufficies to find a number $\sigma\in [\rho,\lambda)$ such that $f(\sigma)<g(\sigma)$. 

We set $\rho=\lambda(1-\theta)$, where $\theta$ belongs to the interval $(0,1)$, and rewrite \eqref{defrho} as
\[
n = \frac{(d-1)\lambda^2}{d} \frac{(1-\theta)^{d+1}}{1-(1-\theta)^{d-1}}.
\]
From the Bernoulli inequality
\[
(1+x)^p \geq 1 + px \qquad \forall x\geq -1, \; \forall p\geq 1,
\]
we deduce the bound
\[
n\geq \frac{(d-1)\lambda^2}{d} \frac{1-(d+1)\theta}{(d-1)\theta} = \frac{\lambda^2}{d}  \frac{1-(d+1)\theta}{\theta},
\]
which can be reformulated as
\[
\theta \geq \frac{\lambda^2}{dn+\lambda^2 (d+1)}.
\]
We conclude that $\rho\leq \sigma$ where
\[
\sigma := \left( 1 - \frac{\lambda^2}{dn+\lambda^2 (d+1)} \right) \lambda =  \left( 1 + \frac{\lambda^2}{d(n+\lambda^2)} \right)^{-1} \lambda < \lambda.
\]
In the remaining part of the proof, we show that $f(\sigma) < g(\sigma)$. Using again the Bernoulli inequality we find
\[
g(\sigma) = \left( \Bigl( 1 + \frac{\lambda^2}{d(n+\lambda^2)} \Bigr)^{\frac{d^2-1}{2}} - 1 \right)^{\frac{2}{d+1}}  \geq \left( \frac{d^2-1}{2d} \frac{\lambda^2}{n+\lambda^2} \right)^{\frac{2}{d+1}},
\]
so it is enough to prove the inequality
\begin{equation}
\label{lbg}
f(\sigma) < \left( \frac{d^2-1}{2d} \frac{\lambda^2}{n+\lambda^2} \right)^{\frac{2}{d+1}}.
\end{equation}
We first deal with the case $d=2$, in which $\lambda$ and $\sigma$ have the values
\[
\lambda= \frac{1}{2}, 
\qquad \sigma = \frac{4n+1}{8n+3}.
\]
Since
\[
f(\sigma) = \frac{\sigma-\sigma^2}{(\sigma^3+ n)^{\frac{2}{3}}}< \frac{\sigma-\sigma^2}{n^{\frac{2}{3}}} = \frac{16n^2+12 n +2}{n^{\frac{2}{3}} (8n+3)^2},
\]
\eqref{lbg} will be proven if we can show that
\[
\frac{16n^2+12 n +2}{n^{\frac{2}{3}} (8n+3)^2} < \left( \frac{3}{4} \frac{1}{4n+1} \right)^{\frac{2}{3}}.
\]
By raising both sides to the power $3$, the above inequality is easily seen to be equivalent to 
\begin{equation}
\label{z1}
2^7 (4n+1)^2 (8n^2+6n+1)^3 < 9 n^2 (8n+3)^6.
\end{equation}
Since
\[
9 n^2 (8n+3)^6 > 8 n^2 (64 n^2+48n+ 9)^3 >  8 n^2 (64 n^2+48n+ 8)^3 = 2^{12} n^2 (8n^2+6n+1)^3,
\]
\eqref{z1} is implied by the inequality
\[
(4n+1)^2 < 32 n^2,
\]
which is indeed true for every $n\geq 1$, being equivalent to
\[
(4n-1)^2>2.
\]
This proves \eqref{lbg} in the case $d=2$. The case $d\geq 3$ can be dealt with by starting from the weaker bound
\[
f(\sigma) = \frac{\sigma-\sigma^d}{(\sigma^{d+1} + n)^{\frac{d}{d+1}}} < \frac{\sigma}{n^{\frac{d}{d+1}}} = \frac{\lambda}{n^{\frac{d}{d+1}}} \left( 1 + \frac{\lambda^2}{d(n+\lambda^2)} \right)^{-1}.
\]
By the above upper bound on $f(\sigma)$, \eqref{lbg} holds true if we can prove the inequality
\begin{equation}
\label{z2}
\left( \frac{d^2-1}{2d} \frac{\lambda^2}{n+\lambda^2} \right)^2 > \frac{\lambda^{d+1}}{n^d} \left( 1 + \frac{\lambda^2}{d(n+\lambda^2)} \right)^{-(d+1)}.
\end{equation}
Using the identity $\lambda=d^{-\frac{1}{d-1}}$ and the Bernoulli inequality, the right-hand side of \eqref{z2} can be estimated in the following way:
\[
\begin{split}
\frac{\lambda^{d+1}}{n^d} & \left( 1 + \frac{\lambda^2}{d(n+\lambda^2)} \right)^{-(d+1)} = \frac{\lambda^2}{d n^d} \left( 1 + \frac{\lambda^2}{d(n+\lambda^2)} \right)^{-(d+1)} \\ &\leq \frac{\lambda^2}{d n^d} \left( 1 + \frac{(d+1)\lambda^2}{d(n+\lambda^2)} \right)^{-1} = \frac{\lambda^2}{n^d} \cdot \frac{n+\lambda^2}{dn+(2d+1)\lambda^2}.
\end{split}
\]
Therefore, \eqref{z2} is implied by
\begin{equation}
\label{z3}
\left( \frac{d^2-1}{2d} \frac{\lambda^2}{n+\lambda^2} \right)^2 >  \frac{\lambda^2}{n^d} \cdot \frac{n+\lambda^2}{dn+(2d+1)\lambda^2}.
\end{equation}
A simple algebraic manipulation
  shows that  \eqref{z3} is equivalent to
\begin{equation}
\label{z4}
d^2 p_{d,n}(d) + (n+2\lambda^2)d + \lambda^2 >0,
\end{equation}
where
\[
p_{d,n}(x) := (n+2\lambda^2) x^3 + \lambda^2 x^2 - 2 (n+2\lambda^2) x - 2 \left( \lambda^2 + \frac{2(n+\lambda^2)^3}{\lambda^2 n^d} \right).
\]
We shall prove that $p_{d,n}(x)> 0$ for every $x\geq 3$, $d\geq 3$ and $n\geq 1$, which implies \eqref{z4} and by the above discussion \eqref{lbg} for every $d\geq 3$. By the Decartes rule of signs, the polynomial $p_{d,n}$ has precisely one positive real root $\alpha$, and 
\[
p_{d,n}(x)<0 \quad \mbox{on }[0,\alpha), \qquad p_{d,n}(x)>0 \quad \mbox{on }(\alpha,+\infty).
\]
Therefore, it is enough to prove
\begin{equation}
\label{z5}
p_{d,n}(3) >0 \qquad \forall d\geq 3, \; \forall n\geq 1.
\end{equation}
Using the inequalities $n\geq 1$ and $d\geq 3$, we find
\begin{equation}
\label{qif}
\begin{split}
p_{d,n}(3) &= 21 n + 49 \lambda^2 - 4 \frac{(n+\lambda^2)^3}{\lambda^2 n^d} \geq 21 + 49 \lambda^2 - 4 \frac{(n+\lambda^2)^3}{\lambda^2 n^3} \\
&= 21 + 49\lambda^2 - \frac{4}{\lambda^2} \left( 1 + \frac{\lambda^2}{n} \right)^3 \geq 21 + 49\lambda^2 - \frac{4}{\lambda^2} \left( 1 + \lambda^2 \right)^3 \\ &= 9 - \frac{4}{\lambda^2} + 37 \lambda^2 - 4 \lambda^4.
\end{split}
\end{equation}
From the fact that the sequence $\lambda_d = d^{-\frac{1}{d-1}}$ is monotonically increasing and converges to 1 we deduce
\[
\frac{1}{3} \leq \lambda_d^2 < 1 \qquad \forall d\geq 3.
\]
Therefore, \eqref{qif} implies
\[
p_{d,n}(3) \geq 9 - 12 + \frac{37}{3} - 4 = \frac{16}{3}>0,
\]
as we wished to prove.
\end{proof}
 
We conclude this section by proving the bounds \eqref{bb0} and \eqref{bbb0} for the quantity $\eta(d,n)$. 

\begin{lemma}
\label{asymptotics}
For every pair of integers $d\geq 2$ and $n\geq 1$  the quantity $\eta(d,n)$ satisfies the bounds
\begin{align}
\label{bb}
d^{-\frac{1}{d-1}} \left( 1 -{\textstyle \frac{1}{d}}  \right) (n+1)^{- \frac{d}{d+1}} \leq \; &\eta(d,n) \leq d^{-\frac{1}{d-1}} \left( 1 - {\textstyle \frac{1}{d}}  \right) n^{- \frac{d}{d+1}}, \\
\label{bbb}
\underline{c} \, d\, n^d \leq \; &\eta(d,n)^{-(d+1)} \leq \overline{c}  \, d\, n^d,
\end{align}
for suitable positive numbers $\underline{c}$ and $\overline{c}$.
\end{lemma}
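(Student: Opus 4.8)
The plan is to derive both \eqref{bb} and \eqref{bbb} from the variational description of $\eta(d,n)$ established in Lemma \ref{confronto}: namely $\eta(d,n) = f_{d,n}(\rho(d,n)) = \max_{r\ge 0} f_{d,n}(r)$ with $\rho(d,n) \in (0,\lambda_d)$, where $\lambda_d = d^{-1/(d-1)}$, and I recall that $\mu_d = \varphi_d(\lambda_d) = \lambda_d(1-\tfrac1d) = d^{-1/(d-1)}(1-\tfrac1d)$ is the global maximum of $\varphi_d$ on $[0,+\infty)$.

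First I would prove \eqref{bb}. The upper bound is immediate from $f_{d,n}(\rho) = \varphi_d(\rho)\,(\rho^{d+1}+n)^{-d/(d+1)} \le \mu_d\, n^{-d/(d+1)}$, using $\varphi_d(\rho)\le\mu_d$ and $\rho^{d+1}+n\ge n$; this quantity is moreover $<1$ since $\mu_d<1$ and $n^{-d/(d+1)}\le 1$ (this gives the strict inequality recorded in \eqref{bb0}, the bound $\rho(d,n)<\lambda_d$ there being already part of Lemma \ref{confronto}). For the lower bound I would use that $\eta(d,n)$ is the \emph{maximum} of $f_{d,n}$ and evaluate at $r=\lambda_d$: $\eta(d,n)\ge f_{d,n}(\lambda_d) = \mu_d\,(\lambda_d^{d+1}+n)^{-d/(d+1)}$; since $\lambda_d^{d+1} = d^{-(d+1)/(d-1)}<1$ we get $\lambda_d^{d+1}+n<n+1$ and hence $\eta(d,n)>\mu_d\,(n+1)^{-d/(d+1)}$.

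For \eqref{bbb} I would raise the two bounds just obtained to the power $-(d+1)$. The lower bound of \eqref{bbb} then comes out cleanly: $\eta(d,n)^{-(d+1)}\ge\mu_d^{-(d+1)}n^d$, and since $\mu_d<\lambda_d$ one has $\mu_d^{d+1}<\lambda_d^{d+1}=d^{-(d+1)/(d-1)}\le d^{-1}$ (using $(d+1)/(d-1)\ge 1$), so $\mu_d^{-(d+1)}>d$ and $\underline{c}=1$ works. The upper bound of \eqref{bbb} is the one place where a little care is needed: the lower bound of \eqref{bb} as written only gives $\eta(d,n)^{-(d+1)}\le\mu_d^{-(d+1)}(n+1)^d$, and $(n+1)^d$ is \emph{not} comparable to $n^d$ uniformly in $d$ when $n$ is small. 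The remedy is to keep the sharper inequality $\eta(d,n)\ge\mu_d\,(\lambda_d^{d+1}+n)^{-d/(d+1)}$ (so that $\eta(d,n)^{-(d+1)}\le(\lambda_d^{d+1}+n)^d\,\mu_d^{-(d+1)}$) and to exploit that $\lambda_d^{d+1}$ is $O(1/d)$, not $O(1)$: from $\lambda_d^{d+1}\le\tfrac1d\le\tfrac nd$ one gets $(\lambda_d^{d+1}+n)^d\le n^d(1+\tfrac1d)^d<e\,n^d$, while $\mu_d^{d+1}=\lambda_d^{d+1}(1-\tfrac1d)^{d+1}>\tfrac1{16\,e\,d}$ follows from the two elementary facts $d^{1/(d-1)}\le 2$ (equivalently $d\le 2^{d-1}$, valid for $d\ge 2$, whence $\lambda_d^{d+1}=d^{-1}\big(d^{1/(d-1)}\big)^{-2}\ge\tfrac1{4d}$) and $(1-\tfrac1d)^{d-1}=\big(1+\tfrac1{d-1}\big)^{-(d-1)}>e^{-1}$ together with $1-\tfrac1d\ge\tfrac12$ (whence $(1-\tfrac1d)^{d+1}>\tfrac1{4e}$). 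Combining these bounds yields $\eta(d,n)^{-(d+1)}<16\,e^2\,d\,n^d$, so $\overline{c}=16\,e^2$ works.

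I expect the only real obstacle to be the observation that \eqref{bb} by itself is too weak for the upper bound in \eqref{bbb}, and the corresponding need to retain the factor $\lambda_d^{d+1}$ rather than replacing it by $1$; once that is noticed, the whole argument reduces to the two standard monotonicity estimates $d\le 2^{d-1}$ and $(1+1/m)^m<e$.
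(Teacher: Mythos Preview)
Your proof is correct and follows essentially the same route as the paper's: both use the variational identity $\eta(d,n)=\max_{r\ge 0} f_{d,n}(r)$, bound $\varphi_d$ by its maximum $\mu_d$ for the upper estimate in \eqref{bb}, evaluate $f_{d,n}$ at $r=\lambda_d$ for the lower estimate, and then raise to the $-(d+1)$ power while retaining the factor $\lambda_d^{d+1}$ (rather than replacing it by $1$) to control $(\lambda_d^{d+1}+n)^d$ uniformly in $d$. The only differences are in the bookkeeping of constants: the paper uses the identity $\lambda_d^{d+1}=\lambda_d^2/d$ and the monotonicity of the sequences $\lambda_d$ and $(1-1/d)^{d+1}$ to obtain $\underline{c}=e$ and $\overline{c}=32e$, whereas your elementary inequalities $d\le 2^{d-1}$ and $(1+1/(d-1))^{d-1}<e$ yield $\underline{c}=1$ and $\overline{c}=16e^2$; either pair suffices.
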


\begin{proof}
By \eqref{mud}, the function $\varphi(r)=r-r^d$ achieves its maximum at $\lambda=d^{-\frac{1}{d-1}}$, where it has the value $\lambda(1-\frac{1}{d})$. From this fact, we obtain the bound
\[
\eta(d,n) = \frac{\rho-\rho^d}{(\rho^{d+1}+n)^{\frac{d}{d+1}}} \leq \lambda \left(1-{\textstyle \frac{1}{d}} \right) n^{- \frac{d}{d+1}},
\]
which gives us the right-hand side estimate in \eqref{bb}. From the fact that $\eta(d,n)$ is the maximum of the function $f$ and that $\lambda$ is smaller than one, we obtain also the lower bound
\[
\eta(d,n) \geq  \frac{\lambda-\lambda^d}{(\lambda^{d+1}+n)^{\frac{d}{d+1}}} = \frac{\lambda {\textstyle \left( 1 - \frac{1}{d} \right)}}{(\lambda^{d+1}+n)^{\frac{d}{d+1}}}  \geq d^{-\frac{1}{d-1}} {\textstyle \left( 1 - \textstyle{\frac{1}{d}} \right)} (n+1)^{- \frac{d}{d+1}},
\]
which is the left-hand side estimate in \eqref{bb}.

The sequence
\[
\lambda_d = d^{-\frac{1}{d-1}}= e^{-\frac{1}{d-1} \log d}
\]
is increasing from the value $\frac{1}{2}$ it takes for $d=2$ towards the value $1$ of its limit for $d\rightarrow \infty$.  From this fact and the identity
\[
 (\lambda - \lambda^d)^{d+1} = \lambda^{d+1} \left( 1- {\textstyle \frac{1}{d}}  \right)^{d+1} = \frac{\lambda^2}{d} \left( 1- {\textstyle \frac{1}{d}}  \right)^{d+1} 
 \]
 we deduce that
 \begin{equation}
 \label{bbb1}
  \frac{1}{32\, d}\leq (\lambda - \lambda^d)^{d+1} \leq \frac{1}{e\,d},
  \end{equation}
 where we have used also the fact that the sequence $\left( 1- \frac{1}{d} \right)^{d+1}$ is increasing from the value $\frac{1}{8}$ it takes for $d=2$ to the value $\frac{1}{e}$ of its limit. Moreover, we have
 \begin{equation}
 \label{bbb2}
 (\lambda^{d+1}+n)^d= n^d {\textstyle \left( 1 + \frac{\lambda^2}{dn} \right)^d \leq n^d \left( 1 + \frac{1}{dn} \right)^d \leq n^d \left( 1 + \frac{1}{d} \right)^d} \leq e\, n^d,
 \end{equation}
 where we have used the fact that the sequence $\left( 1+ \frac{1}{d} \right)^{d}$ is increasing and converges to $e$. Since $\eta(d,n)$ is the maximum of $f$, \eqref{bbb1} and \eqref{bbb2} imply
  \begin{equation}
 \label{bbb3}
 \eta(d,n)^{d+1} \geq f(\lambda)^{d+1} = \frac{ (\lambda - \lambda^d)^{d+1}}{(\lambda_d^{d+1}+n)^d} \geq \frac{1}{32 \,d\, e\,n^d}.
 \end{equation}
On the other hand, since $\lambda$ maximizes the function $\varphi$, we have by \eqref{bbb1}
  \begin{equation}
 \label{bbb4}
\eta(d,n)^{d+1}  \leq \frac{(\lambda - \lambda^d)^{d+1}}{n^d} \leq \frac{1}{e\, d\, n^d}.
 \end{equation}
 The bounds \eqref{bbb3} and \eqref{bbb4} imply that \eqref{bbb} holds with $\underline{c}=e$ and $\overline{c}=32e$.
\end{proof}

\subsection{Proof of Theorem \ref{main1-thm}}
\label{proof-sec}

Building on Theorem \ref{thm: Zachary} and Proposition \ref{useful}, we now show how Theorem \ref{main1-thm} follows from Proposition \ref{existence}. We assume that
\[
\|q\|_{\frac{d+1}{2}} = \|Q- \mathbbm{1}_{\{0\}} \|_{\frac{d+1}{2}} \leq \eta(d,N)
\]
and we fix a subset $A$ with $1\leq n:=|A| \leq N$. Let $\overline{x}\in \ell^{d+1}(S)$ be a solution of \eqref{aim}, whose existence is guaranteed by Proposition \ref{existence}. Let $
u = \overline{x}^d \in \ell^{\frac{d+1}{d}}(S)$ be the corresponding solution of the boundary law equation \eqref{eq: boundarylaw} with $c=1$. By Theorem \ref{thm: Zachary} and Proposition \ref{useful}, the boundary law $u$ induces a spatially homogeneous Markov-chain Gibbs measure $\mu\in \mathcal{MG}(\gamma)$ whose single-site marginal distribution $\pi_{\mu}$ and transition matrix $P_{\mu}$ are given by
\[
\pi_{\mu}(i) = \frac{u(i)^{\frac{d+1}{d}}}{\|u^{\frac{d+1}{d}}\|}, \qquad P_{\mu}(i,j) = \frac{u(j) Q(i-j)}{(Q* u) (i)}.
\]
From the identities $u=\overline{x}^d$ and $\overline{x} = Q * \overline{x}^d$, we find
\begin{equation}
\label{translate}
\pi_{\mu}(i) = \frac{\overline{x}(i)^{d+1}}{\|\overline{x}\|_{d+1}^{d+1}}, \qquad P_{\mu}(i,j) = \frac{\overline{x}(j)^d Q(i-j)}{\overline{x}(i)}.
\end{equation}
Then \eqref{aim} implies, setting $\theta:= \theta(d,n) := (\frac{\rho(d,n)}{\lambda_d} )^{d+1}$, 
\[
\|\pi_{\mu}|_{A^c}\|_1 = \frac{\|\overline{x}|_{A^c}\|_{d+1}^{d+1}}{\|\overline{x}\|_{d+1}^{d+1}} < \frac{\rho(d,n)^{d+1}}{\|\overline{x}\|_{d+1}^{d+1}} < \theta  \frac{ \left(\min_A \overline{x} \right)^{d+1}}{\|\overline{x}\|_{d+1}^{d+1}}  = \theta \min_A \frac{\overline{x}^{d+1}}{\|\overline{x}\|_{d+1}^{d+1}} = \theta  \min_A \pi_{\mu},
\]
proving \eqref{main2} in Theorem \ref{main1-thm}. By \eqref{translate}, the diagonal elements $\Delta(i)=P_{\mu}(i,i)$ of the transition matrix are given by
\[
\Delta(i) = \overline{x}(i)^{d-1}.
\]
The bounds
\begin{equation}
\label{primi}
\|\overline{x}|_{A^c}\|_{\infty} \leq \|\overline{x}|_{A^c}\|_{d+1} < \rho(d,n) < \lambda_d = d^{- \frac{1}{d-1}} < \overline{x}|_A < 1,
\end{equation}
from \eqref{aim} translate into
\[
\|\Delta_{A^c}\|_{\infty} \leq \|\Delta|_{A^c}\|_{\frac{d+1}{d-1}} < \rho(d,n)^{d-1}  <  \frac{1}{d} < \Delta|_A < 1,
\]
proving \eqref{main1} in Theorem \ref{main1-thm}.

\begin{rk}
\label{uniqueness2}
{\rm As shown in Remark \ref{uniqueness1}, if $|A|=1$ then the solution $\overline{x}\in \ell^{d+1}(S)$ of \eqref{aim} is unique. Together with the uniqueness of the boundary law which is determined by a Gibbs measure (see again Theorem \ref{thm: Zachary}), this implies that in the case $|A|=1$ the above $\mu$ is the unique spatially homogeneous Markov-chain Gibbs measure $\mu$ whose transition matrix 
$P_{\mu}$ satisfies \eqref{main1}.}
\end{rk}

In the following proof of statements (i)-(v) of Theorem \ref{main1-thm}, we use the abbreviations
\[
\epsilon := \|q\|_{\frac{d+1}{2}} =  \|Q- \mathbbm{1}_{\{0\}} \|_{\frac{d+1}{2}}, \qquad \rho = \rho(d,n), \qquad \eta = \eta(d,n). 
\]
By statement (i) in Proposition \ref{existence} we have
\[
\|\Delta|_{A^c}\|_{\frac{d+1}{d-1}} = \|\overline{x}|_{A^c}\|_{d+1}^{d-1} \leq \frac{\rho^{d-1}}{\eta^{d-1}} \epsilon^{d-1},
\]
proving assertion (i) in Theorem \ref{main1-thm}. By statement (ii) in Proposition \ref{existence} we have, using the Bernoulli inequality,
\[
\Delta|_A = \overline{x}_A^{d-1} \geq \left( 1 - \frac{d^{\frac{d}{d-1}}-d}{d-1} (1+n)^{\frac{d}{d+1}} \epsilon \right)^{d-1} \geq 1 - \left( d^{\frac{d}{d-1}} - d \right) (1+n)^{\frac{d}{d+1}} \epsilon,
\]
which proves statement (ii) in Theorem \ref{main1-thm}.

There remains to prove the bounds (iii), (iv) and (v) on the single-site marginal distribution $\pi_{\mu}$. By \eqref{primi}, the $(d+1)$-norm of $\overline{x}$ has the lower bound
\[
\|\overline{x}\|_{d+1}^{d+1} \geq \|\overline{x}|_A\|_{d+1}^{d+1} \geq  \lambda_d^{d+1} |A|.
\]
By statement (i) in Proposition \ref{existence} we have
\[
\| \pi_{\mu}|_{A^c} \|_1 = \frac{\|\overline{x}|_{A^c}\|_{d+1}^{d+1}}{\|\overline{x}\|_{d+1}^{d+1}} \leq \frac{ \left( \frac{\rho}{\eta} \right)^{d+1} \, \epsilon^{d+1}}{\lambda_d^{d+1} |A|} = \frac{d^{\frac{d+1}{d-1}} \rho^{d+1}}{\eta^{d+1} n } \epsilon^{d+1},
\]
which proves assertion (iii) in Theorem \ref{main1-thm}. From statement (ii) in Proposition \ref{existence} we obtain, using the Bernoulli inequality,
\[
\begin{split}
\|\overline{x}\|_{d+1}^{d+1} \geq \|\overline{x}|_A\|_{d+1}^{d+1}  &\geq \left( 1 - \frac{d^{\frac{d}{d-1}}-d}{d-1} (1+n)^{\frac{d}{d+1}} \epsilon \right)^{d+1} |A| \\ & \geq  \left( 1 - \frac{d+1}{d-1} \bigl( d^{\frac{d}{d-1}}-d\bigr)  (1+n)^{\frac{d}{d+1}} \epsilon \right) |A|, 
\end{split}
\]
and hence
\[
\pi_{\mu}|_A = \frac{\overline{x}|_A^{d+1}}{ \|\overline{x}\|_{d+1}^{d+1}}  \leq \frac{1}{\|\overline{x}\|_{d+1}^{d+1}} \leq  \left( 1 - \frac{d+1}{d-1} \bigl( d^{\frac{d}{d-1}}-d\bigr)  (1+n)^{\frac{d}{d+1}} \epsilon \right)^{-1} \frac{1}{|A|},
\]
which proves the right-hand side inequality in statement (iv) of Theorem \ref{main1-thm}. On the other hand, using statement (i) in Proposition \ref{existence} we have
\[
\begin{split}
\|\overline{x}\|_{d+1}^{d+1} = \|\overline{x}|_A\|_{d+1}^{d+1} + \|\overline{x}_{A^c}\|_{d+1}^{d+1} &\leq |A| + \frac{\rho^{d+1}}{\eta^{d+1}}  \epsilon^{d+1} \\ & = \left( 1 +  \frac{\rho^{d+1}}{n \eta^{d+1}}  \epsilon^{d+1}  \right) |A|,
\end{split} 
\]
and hence, using also the inequality $\epsilon\leq \eta(d,N) \leq 1$ (see \eqref{bb}),
\begin{equation}
\label{massa}
\|\overline{x}\|_{d+1}^{-(d+1)}  \geq \left( 1 -  \frac{\rho^{d+1}}{n \eta^{d+1}}  \epsilon^{d+1}  \right) \frac{1}{|A|} \geq \left( 1 - \frac{\rho^{d+1}}{n \eta^{d+1}}  \epsilon  \right) \frac{1}{|A|} . 
\end{equation}
Using again statement (ii) in Proposition \ref{existence} and the Bernoulli inequality we obtain
\[
\begin{split}
\pi_{\mu}|_A &= \frac{\overline{x}|_A^{d+1}}{ \|\overline{x}\|_{d+1}^{d+1}}  \geq \left( 1 - \frac{d^{\frac{d}{d-1}}-d}{d-1} (1+n)^{\frac{d}{d+1}} \epsilon \right)^{d+1} \left( 1 - \frac{\rho^{d+1}}{n \eta^{d+1}}  \epsilon  \right) \frac{1}{|A|}  \\ &\geq \left( 1 - \frac{d+1}{d-1}\bigl( d^{\frac{d}{d-1}}-d \bigr) (1+n)^{\frac{d}{d+1}} \epsilon \right)  \left( 1 - \frac{\rho^{d+1}}{n \eta^{d+1}}  \epsilon  \right)  \frac{1}{|A|} \\ &\geq \left( 1 -  \Bigl( \frac{d+1}{d-1}\bigl( d^{\frac{d}{d-1}}-d \bigr) (1+n)^{\frac{d}{d+1}} + \frac{\rho^{d+1}}{n \eta^{d+1}}  \Bigr) \epsilon  \right) \frac{1}{|A|}.
\end{split}
\]
This proves the left-hand inequality in statement (iv) of Theorem \ref{main1-thm}. By \eqref{massa}, statement (iii)  in Proposition \ref{existence} and a last application of the Bernoulli inequality, we obtain for every $i\in A^c$ the lower bound
\[
\begin{split}
\pi_{\mu}(i) &\geq \frac{1}{|A|}  \left( 1 - {\textstyle \frac{\rho^{d+1}}{n \eta^{d+1}}}  \epsilon  \right)  \bigl(1- {\textstyle \frac{d}{d-1}} ( d^{\frac{d}{d-1}}-d) (1+n)^{\frac{d}{d+1}} \epsilon \bigr)^{d+1} \Bigl( \sum_{j\in A} Q(i-j) \Bigr)^{d+1} \\
& \geq  \frac{1}{|A|} \left( 1 -{\textstyle \frac{\rho^{d+1}}{n \eta^{d+1}} } \epsilon  \right)  \bigl(1- {\textstyle \frac{d(d+1)}{d-1}} ( d^{\frac{d}{d-1}}-d) (1+n)^{\frac{d}{d+1}} \epsilon \bigr) \Bigl( \sum_{j\in A} Q(i-j)  \Bigr)^{d+1} 
\\ 
& \geq  \frac{1}{|A|} \left( 1 - \bigl( {\textstyle \frac{\rho^{d+1}}{n \eta^{d+1}} }  + {\textstyle \frac{d(d+1)}{d-1}} ( d^{\frac{d}{d-1}}-d) (1+n)^{\frac{d}{d+1}}\bigr)  \epsilon \right) \Bigl( \sum_{j\in A} Q(i-j)  \Bigr)^{d+1}.
\end{split}
\]
This proves statement (v) of Theorem \ref{main1-thm} and concludes the proof of Theorem \ref{main1-thm}.

\printbibliography

\end{document}